\newtheorem{theorem}{Theorem}[section]
\newtheorem{definition}[theorem]{Definition}
\newtheorem{question}[theorem]{Question}
\newtheorem{example}[theorem]{Example}
\newtheorem{lemma}[theorem]{Lemma}
\newtheorem{proposition}[theorem]{Proposition}
\newtheorem{corollary}[theorem]{Corollary}
\newtheorem{remark}[theorem]{Remark}
\def\o{\otimes^k }
\def\s{\backslash}
\def\et{/ \pi_{k,s} \s}
\def\etd{\s \varepsilon_{k,s} /}
\def\s{\backslash}
\def\os{\otimes^{k,s}}
\def\Q{\mathcal{Q}}
\begin{document}

\title[Geometry, $M$-ideals and unique norm preserving extensions]{Geometry of integral polynomials, $M$-ideals and unique norm preserving extensions}

\author{Ver\'{o}nica Dimant}
\author{Daniel Galicer}
\author{Ricardo Garc\'{\i}a}

\thanks{The first author was partially supported by CONICET PIP 0624. The second author was partially supported by UBACyT Grant X218, CONICET PIP 0624 and a doctoral fellowship from CONICET. The third author has been supported in part by Project MTM2010-20190-C02-01 and Junta de Extremadura CR10113 ``IV Plan Regional I+D+i, Ayudas a Grupos de Investigaci\'{o}n''.}

\address{Departamento de Matem\'{a}tica, Universidad de San
Andr\'{e}s, Vito Dumas 284, (B1644BID) Victoria, Buenos Aires,
Argentina and CONICET.} \email{vero@udesa.edu.ar}

\address{Departamento de Matem\'{a}tica - Pab I,
Facultad de Cs. Exactas y Naturales, Universidad de Buenos Aires,
(C1428EGA) Buenos Aires, Argentina and CONICET.} \email{dgalicer@dm.uba.ar}

\address{Departamento de Matem\'{a}ticas, Universidad de Extremadura, Avenida de Elvas,
(ES-06071) Badajoz, Spain}
\email{rgarcia@unex.es}

\keywords{Integral polynomials, Symmetric Tensor Products, $M$-ideals, Extreme points, Aron-Berner extension} \subjclass[2010]{46G25, 46M05, 46B28}

\begin{abstract}%

We use the Aron-Berner extension to prove that the set of extreme points of the unit ball of the space of integral polynomials over a real Banach space $X$ is $\{\pm \phi^k: \phi \in X^*, \| \phi\|=1\}$. With this description we show that, for real Banach spaces $X$ and $Y$, if $X$ is a non trivial $M$-ideal in $Y$, then $\widehat\bigotimes^{k,s}_{\varepsilon_{k,s}} X$ (the $k$-th symmetric tensor product of $X$ endowed with the injective symmetric tensor norm) is \emph{never} an $M$-ideal in $\widehat\bigotimes^{k,s}_{\varepsilon_{k,s}} Y$. This result marks up a difference with the behavior of non-symmetric tensors since, when $X$ is an $M$-ideal in $Y$, it is known  that $\widehat\bigotimes^k_{\varepsilon_k} X$ (the $k$-th tensor product of $X$ endowed with the injective tensor norm) is an $M$-ideal in $\widehat\bigotimes^k_{\varepsilon_k} Y$.
Nevertheless, if $X$ is Asplund, we prove that every integral $k$-homogeneous polynomial in $X$ has a unique extension to $Y$ that preserves the integral norm. We explicitly describe this extension.

We also give necessary and sufficient conditions (related with the continuity of the Aron-Berner extension morphism) for a fixed  $k$-homogeneous polynomial $P$ belonging to a maximal polynomial ideal $\Q(^kX)$ to have a unique norm preserving extension to $\Q(^kX^{**})$. To this end, we study the relationship between the bidual of the symmetric tensor product of a Banach space and the symmetric tensor product of its bidual and show (in the presence of the BAP) that both spaces have `the same local structure'.
Other applications to the metric and isomorphic theory of symmetric tensor products and polynomial ideals are also given.
\end{abstract}

\maketitle

\section*{Introduction}

The  study of  extreme points of dual unit balls and the presence of $M$-ideal structures showed to be very useful tools in the theory of Banach spaces, leading to a better understanding of the geometry of the spaces involved.

Ruess-Stegall \cite{RS}, Ryan-Turett \cite{RT}, Boyd-Ryan \cite{BR},  Dineen \cite{Di}   and Boyd-Lassalle \cite{BL}   in their investigations studied the extreme points of the unit ball of the space of (integral) polynomials defined on a Banach space.
On the other hand, a number of authors have examined $M$-ideal structures in tensor products, operator spaces, spaces of polynomials or Banach algebras,  see e.g.   D. Werner \cite{W,W1}, Dimant \cite{D}, V. Lima \cite{L} and Harmand-Werner-Werner \cite{HWW} (see also the references therein).

Motivated by the increasing interest in the theory of homogeneous polynomials and symmetric tensor products, we study the  extreme points of the unit ball of a space of integral polynomials, the existence of an $M$-ideal structure in the symmetric injective tensor product and unique norm preserving extensions for integral polynomials (and also for polynomials belonging to other ideals).

In 1972 Alfsen and Effros \cite{AE} introduced the notion of an $M$-ideal in a Banach space. The presence of an $M$-ideal $X$ in a Banach space $Y$ in some way expresses that the norm of $Y$ is a sort of maximum norm (hence the letter $M$). To be more
precise, a subspace $X$ of a Banach space $Y$ is an $M$-ideal in $Y$ if its annihilator, $X^\perp$, is $\ell_{1}$-complemented (then
we may write $Y^*=X^\perp \oplus_1 X^*$, see Section 1).

As it is quoted in the book written by Harmand, Werner and Werner \cite{HWW}:
``The fact that $X$ is an $M$-ideal in $Y$ has a strong impact on both $Y$ and $X$ since there are
a number of important properties shared by $M$-ideals, but not by arbitrary subspaces''.
One of the interesting properties shared by $M$-ideals is the following: if $X$ is an $M$-ideal in $Y$ then every linear functional defined in $X$ has a \emph{unique} norm preserving extension to a functional in $Y^*$ \cite[Prop. I.1.12]{HWW}.

As a consequence of \cite[Prop. VI.3.1]{HWW} we know that if $X$ is an $M$-ideal in $Y$, then $\widehat\bigotimes^k_{\varepsilon_k} X$ (the $k$-th tensor product of $X$ endowed with the injective tensor norm) is an $M$-ideal in $\widehat\bigotimes^k_{\varepsilon_k} Y$. Therefore, every integral $k$-linear form in $X$ has a unique  extension to $Y$ that preserves the integral norm.

Most of the results of the theory of tensor products and tensor norms have their natural analogue in the symmetric context (i.e. in the theory of symmetric tensor products); so one should expect that whenever $X$ is a non trivial $M$-ideal in $Y$, then $\widehat\bigotimes^{k,s}_{\varepsilon_{k,s}} X$ (the $k$-th symmetric tensor product of $X$ endowed with the injective symmetric tensor norm) would be an $M$-ideal in $\widehat\bigotimes^{k,s}_{\varepsilon_{k,s}} Y$. Surprisingly,  we will see in Theorem \ref{no es M-ideal} that, for real Banach spaces, this never can happen.
To prove this, we make use of a characterization of the extreme points of the unit ball of the space of integral polynomials over real Banach spaces, which is interesting in its own right.

In \cite{BL} Boyd and Lassalle proved that if $X$ is a real Banach space, $X^*$ has the
approximation property and  $\widehat{\otimes}^{k,s}_{\varepsilon_{k,s}}X$ does not contain a copy of $\ell_{1}$, then the set of extreme points of the unit ball of the space of integral $k$-homogeneous polynomials over $X$ is $\{\pm \phi^k: \phi \in X^*, \| \phi\|=1\}$. We will show in Theorem \ref{Extreme points} that all the additional hypotheses of their result can be removed.

Even though the  $M$-structure for symmetric tensors fails, one may wonder whether the consequence about unique norm  preserving extensions holds. That is, being $X$  a non trivial $M$-ideal in $Y$, has every integral $k$-homogeneous polynomial in $X$  a unique extension to $Y$ that preserves the integral norm? We will give in Theorem \ref{unique extension integrales} a positive answer for the case of $X$ being an Asplund space and describe explicitly this unique extension.
In particular, if $X$  is an $M$-ideal in its bidual $X^{**}$ then every integral $k$-homogeneous polynomial in $X$ has a unique extension to $X^{**}$ with the same integral norm.

We will also examine the following related question: Let $\Q$ be a maximal polynomial ideal and  let $P$ be a fixed polynomial belonging to $\Q(^kX)$, under what conditions do we have a \emph{unique} norm preserving extension of $P$ to the bidual $X^{**}$? Since the Aron-Berner extension preserves the ideal norm for maximal polynomial ideals \cite{CarGal}, the question can be rephrased in the following way: When is the Aron-Berner extension the \emph{only} norm preserving extension (for a given polynomial) in $\Q$?

This was addressed in \cite{ABCh} for the ideal of continuous homogeneous polynomials. We will see in Section 4 necessary and sufficient conditions for this to happen that are related with the continuity of the Aron-Berner extension morphism.

To this end, given a symmetric tensor norm of order $k$, $\beta_k$,  we examine the local geometry of the  the bidual of the symmetric tensor product of a Banach space, $\left(\widehat{\bigotimes\nolimits}^{k,s}_{\beta_k}X\right)^{**}$, and the symmetric tensor product of its bidual $\widehat{\bigotimes\nolimits}^{k,s}_{\beta_k}X^{**}$. We introduce a canonical application $\Theta_{\beta_k}$ from $\widehat{\bigotimes\nolimits}^{k,s}_{\beta_k}X^{**}$ to $\left(\widehat{\bigotimes\nolimits}^{k,s}_{\beta_k}X\right)^{**}$, in order to study if these spaces  have the same local structure in the sense of the principle of local reflexivity. Some isomorphic properties are derived from this relationship. For example, in Theorem \ref{loc-compl} we show that, if $X^{**}$ has the bounded approximation property, then $\Theta_{\beta_k}$ embeds $\widehat{\bigotimes}^{k,s}_{\beta_k}X^{**}$ as a locally complemented subspace of
$\left(\widehat{\bigotimes}^{k,s}_{\beta_k}X\right)^{**}$. Equivalently, $\mathcal{P}_{\beta_k}(^kX^{**})$ (the maximal ideal of $\beta_k$-continuous $k$-homogeneous polynomials over $X^{**}$) is a complemented subspace of $\mathcal{P}_{\beta_k}(^kX)^{**}$. This  extends results of Jaramillo-Prieto-Zalduendo \cite[Cor. 3]{JPZ} and Cabello-Garc\'ia \cite[Th. 2]{CG}.

We will also find conditions to ensure the existence of a canonical isomorphism between these two spaces, i.e. the $Q$-reflexivity for the $\beta_k$ norm introduced by Aron and Dineen \cite{AD}.\\

The article is organized as follows. In Section 1 we give some preliminary background. All the results mentioned above regarding the injective symmetric tensor norm and the ideal of integral polynomials are presented in Section \ref{integral polynomials on M-ideals}.
In Section 3 we describe and study some properties of the mapping $\Theta_{\beta_k}$ which relates $\widehat{\bigotimes\nolimits}^{k,s}_{\beta_k}X^{**}$ and
$\left(\widehat{\bigotimes\nolimits}^{k,s}_{\beta_k}X\right)^{**}$. In Section 4 we study necessary and sufficient conditions that assure that a given $k$-homogeneous polynomial $P$ belonging to a maximal polynomial ideal $\Q(^kX)$ has a unique norm preserving extension to $\Q(^kX^{**})$. In Section 5 we investigate under which circumstances the mapping $\Theta_{\beta_k}$ (equivalently $(\Theta_{\beta_k})^*$) becomes an isomorphism; providing thus an isomorphism between $\mathcal{P}_{\beta_k}(^kX)^{**}$ and  $\mathcal{P}_{\beta_k}(^kX^{**})$.

We refer to \cite{F,FH} for the background about symmetric  tensor products and polynomial ideals and to \cite{HWW} for the theory of $M$-ideals.

\bigskip

\section{Preliminaries}

Throughout the paper $X$ and $Y$ will be real or complex Banach spaces, $X^{*}$ will denote the dual space of $X$, $B_X$ will be the closed unit ball of $X$ and $S_X$ will stand for the unit sphere.
The canonical inclusion from $X$ to $X^{**}$ will be denoted by $\kappa_X$. We will also note by $FIN(X)$  the class of all finite dimensional subspaces of $X$.

We will use the notation $\bigotimes^k X$ for the $k$-fold
tensor product of $X$.
For simplicity, $\otimes^{k} x$ will stand for the elementary tensor $x \otimes  \overset k \cdots \otimes
x$.
The subspace of $\bigotimes^k X$ consisting of all
tensors of the form $\sum_{j=1}^r \lambda_j \otimes^{k} x_j$, where $\lambda_j$ is a scalar and $x_j\in X$ for all $j$, is called the
\emph{symmetric $k$-fold tensor product of $X$} and is denoted by $\bigotimes^{k,s} X$. When $X$ is a vector space over $\mathbb C$, the scalars are not needed in the previous expression.

Given a  continuous operator $T\colon X\to Y$,
\emph{the symmetric $k$-tensor power of $T$} (or the \emph{tensor operator of $T$}) is the mapping from $\bigotimes^{k,s}
X$ to $\bigotimes^{k,s}Y$ defined by
$$
\Big( \os T \Big)(\otimes^k x) = \otimes^k (Tx)
$$
on the elementary tensors and extended by linearity.

For a $k$-fold symmetric tensor $v\in \bigotimes^{k,s} X$, the \emph{symmetric projective norm} of $v$ is given by
$$
\pi_{k,s}(v)=\inf\left\{\sum_{j=1}^r |\lambda_j| \| x_j\|^k\right\},
$$
where the infimum is taken over
all the representations of $v$ of the form  $\sum_{j=1}^r \lambda_j\otimes^k x_j$.

On the other hand,
\emph{the symmetric
injective norm} of $v$ is defined by
$$
\varepsilon_{k,s}(v)= \sup_{\phi\in B_{X^{*}}}\left|\sum_{j=1}^r
 \lambda_j\phi(x_j)^k\right|,
$$
where $\sum_{j=1}^r \lambda_j\otimes^k x_j$ is any fixed representation of $v$.
For properties of these two classical norms ($\varepsilon_{k,s}$ and $\pi_{k,s}$) see \cite{F}.

\medskip

Symmetric tensor products linearize homogeneous polynomials. Recall that a function $P\colon X\to \mathbb{K}$ is said to be a (continuous)
\emph{$k$-homogeneous polynomial} if there exists a
(continuous) symmetric $k$-linear form  $$ A :{X\times \overset k\cdots \times X} \to \mathbb{K}$$ such that
$P(x)= A(x,\ldots,x)$ for all $x\in X$.
In this case, $A$ is called the symmetric $k$-linear form associated to $P$ and it is usually denoted by $\overset\vee P$.
Continuous
$k$-homogeneous polynomials are those bounded in the unit ball, and the norm of such $P$ is given by $$\|P\|=\sup_{\|x\|\le 1}|P(x)|.$$ If we denote
by $\mathcal{P}(^kX)$ the Banach space of all continuous
$k$-homogeneous polynomials on $X$  endowed with the sup norm, we have the isometric identification \begin{equation}\label{dualidad-pol-tensor}\mathcal P(^kX) \overset 1 = \Big(\bigotimes\nolimits^{k,s}_{\pi_{k,s}} X\Big)^{*}.\end{equation}

\medskip
We say that  $\beta_k$ is a \emph{symmetric tensor norm  of order $k$} (s-tensor norm) if $\beta_k$ assigns to each normed space $X$ a norm $\beta_k \big(\; . \;; \bigotimes^{k,s} X \big)$ on the $k$-fold symmetric tensor product $\bigotimes^{k,s} X$ such that
\begin{enumerate}
\item $\varepsilon_{k,s} \leq \beta_k \leq \pi_{k,s}$ on $\bigotimes^{k,s} X$.
\item $\| \os T :   \bigotimes^{k,s}_{\beta_k} X \to \bigotimes^{k,s}_{\beta_k} Y \| \leq \|T\|^k$ for each operator $T \in \mathcal{L}(X,Y)$.
\end{enumerate}
Condition $(2)$ will be referred to as the \emph{``metric mapping property''}. We denote by ${\bigotimes}^{k,s}_{\beta_k} X$ the tensor product ${\bigotimes}^{k,s}X$ endowed with the norm $\beta_k \big(\; . \;; \bigotimes^{k,s} X \big)$, and we write
$\widehat{\bigotimes}^{k,s}_{\beta_k} X$ for its completion.

An s-tensor norm $\beta_k$ is called \emph{finitely generated} if for every normed space $X$ and $v \in \bigotimes^{k,s} X$, we have: $$ \beta_k (v, \bigotimes\nolimits^{k,s}X) = \inf \left\{ \beta_k(v, \bigotimes\nolimits^{k,s}M) : M \in FIN(X),\; v \in \bigotimes\nolimits^{k,s}M \right\}.$$

From now on, every s-tensor norm considered in the article will be finitely generated.

If $\beta_k$ is an s-tensor norm of order $k$, then the \emph{dual tensor norm $\beta_k^{*}$} is defined on FIN (the class of finite dimensional spaces) by
\begin{equation}\label{defi prima dim finita}   \bigotimes\nolimits^{k,s}_{\beta_k^{*}} M :\overset 1 = \Big( \bigotimes\nolimits^{k,s}_{\beta_k} M^{*}\Big)^{*}\end{equation}
and on NORM (the class of normed spaces) by
$$ \beta_k^{*} \Big( v, \bigotimes\nolimits^{k,s} X \Big) : = \inf \left\{ \beta_k^{*} (v, \bigotimes\nolimits^{k,s} M ) : v \in \bigotimes\nolimits^{k,s} M \right\},$$
the infimum being taken over all the finite dimensional subspaces $M$ of $X$ whose symmetric tensor product contains $v$.

\bigskip
Since  any s-tensor norm satisfies $\beta_k \leq \pi_{k,s}$, we have a dense inclusion $$\bigotimes\nolimits^{k,s}_{\beta_k}  X \hookrightarrow  \bigotimes\nolimits^{k,s}_{\pi_{k,s}} X. $$ As a consequence, any $P\in \big(\bigotimes\nolimits^{k,s}_{\beta_k} X\big)^{*}$ can be thought as a $k$-homogeneous polynomial on $X$. Different s-tensor norms $\beta_k$ give rise, by this duality, to different classes of polynomials.

We will say that $\beta_k$ is \emph{projective} if, for every metric surjection $Q: X \overset 1 \twoheadrightarrow Y$, the tensor product operator
$$ \os Q : \bigotimes\nolimits^{k,s}_{\beta_k} X \to \bigotimes\nolimits^{k,s}_{\beta_k} Y$$
is also a metric surjection.
On the other hand we will say that $\alpha$ is \emph{injective} if, for every  $I: X \overset 1 \hookrightarrow Y$ isometric embedding, the tensor product operator
$$ \os I : \bigotimes\nolimits^{k,s}_{\beta_k} E \to \bigotimes\nolimits^{k,s}_{\beta_k} F,$$
is an isometric embedding.

The two extreme s-tensor norms, $\pi_s$ and $\varepsilon_s$, are examples of the last two definition: $\pi_s$ is projective and $\varepsilon_s$ is injective.

The projective and injective associates (or hulls) of a symmetric tensor norm $\beta_k$ will be denoted, by extrapolation of the 2-fold full case, as $\s \beta_k /$ and $/ \beta_k \s$ respectively. The projective associate of $\beta_k$ will be the (unique) smallest projective tensor norm greater than $\beta_k$. Following some ideas from \cite[Th. 20.6.]{DF} we have
$$ \otimes^{k,s} Q_X \colon \bigotimes\nolimits^{k,s}_{\beta_k} \ell_1(X) \overset 1 \twoheadrightarrow  \bigotimes\nolimits^{k,s}_{\s \beta_k /}  X,$$ where $Q_X : \ell_1(B_X) \twoheadrightarrow X$ is the canonical quotient map. We say that $\beta_k$ is projective if $\beta_k=\s \beta_k /$.

The injective associate of $\beta_k$ will be the (unique) greatest injective tensor norm smaller than $\beta_k$.  As in \cite[Th. 20.7]{DF} we get,
$$ \otimes^{k,s} I_X \colon  \bigotimes\nolimits^{k,s}_{/ \beta_k \s} X  \overset 1 \hookrightarrow  \bigotimes\nolimits^{k,s}_{\beta_k} \ell_{\infty}(B_{X^*}),$$  where $I_X$ is the canonical embedding. We say that $\beta_k$ is injective if $\beta_k=/ \beta_k \s$.

The following duality relations for an s-tensor norm $\beta_k$ are easily obtained (see \cite{CarGal-five})
$$  (/ \beta_k \s)^* = \s \beta_k^* /, \; \; \;   (\s \beta_k /)^* = / \beta_k^* \s.$$

Let us recall some definitions on the theory of Banach polynomial ideals
\cite{Flo02(On-ideals)}. A \emph{Banach ideal of continuous scalar valued
$k$-homogeneous polynomials} is a pair
$(\mathcal{Q},\|\cdot\|_{\mathcal Q})$ such that:
\begin{enumerate}
\item[(i)] $\mathcal{Q}(^kX)=\mathcal Q \cap \mathcal{
P}(^kX)$ is a linear subspace of $\mathcal{P}(^kX)$ and $\|\cdot\|_{\mathcal Q(^kX)}$ (the restriction of $\|\cdot\|_{\mathcal Q}$ to $\mathcal{Q}(^kX)$) is a norm which makes
$(\mathcal{Q}(^kX),\|\cdot\|_{\mathcal Q(^kX)})$ a Banach space.

\item[(ii)] If $T\in \mathcal{L} (X_1,X)$, $P \in \mathcal{Q}(^kX)$ then $P\circ T\in \mathcal{Q}(^kX_1)$ and $$ \|
P\circ T\|_{\mathcal{Q}(^kX_1)}\le  \|P\|_{\mathcal{Q}(^kX)} \| T\|^k.$$

\item[(iii)] $z\mapsto z^k$ belongs to $\mathcal{Q}(^k\mathbb K)$
and has norm 1.
\end{enumerate}

Let $(\mathcal{Q},\|\cdot\|_{\mathcal Q})$ be a Banach ideal of continuous scalar valued
$k$-homogeneous polynomials and, for $P \in \mathcal{
P}(^kX)$, define
$$\|P\|_{\Q^{max}(^kX)}:= \sup \{ \|P|_M\|_{\Q(^kM)} : M \in FIN(X) \} \in [0, \infty].$$
The \emph{maximal hull} of $\mathcal{Q}$ is the ideal given by $ \mathcal{Q}^{max} := \{P \in \mathcal{
P}(^kX) : \|P\|_{\Q^{max}} < \infty \}$. An ideal $\mathcal{Q}$ is said to be \emph{maximal} if $\mathcal{Q} \overset{1}{=} \mathcal{Q}^{max}$. The \emph{minimal hull} of $\mathcal{Q}$ is the composition ideal $ \mathcal{Q}^{min} := \mathcal{Q}\circ \overline{\mathcal F}$, where $\overline{\mathcal F}$ stands for the ideal of approximable operators, with the usual composition norm. An ideal $\mathcal{Q}$ is said to be minimal if $\mathcal{Q} \overset{1}{=} \mathcal{Q}^{min}$.

By \cite{FH}, a maximal (scalar-valued) ideal of $k$-homogeneous
polynomials is dual to a symmetric tensor product endowed with a
finitely generated s-tensor norm of order $k$. So, for $\beta_k$ a
finitely generated s-tensor norm of order $k$, we denote by
$\mathcal{P}_{\beta_k}$  the polynomial ideal dual to this tensor
norm. That is, for a Banach space $X$,
$$
\mathcal{P}_{\beta_k}(^kX)=\Big(\widehat{\bigotimes\nolimits}^{k,s}_{\beta_k}X\Big)^*.
$$

The maximal ideal dual to the symmetric tensor norm $\beta_k=\varepsilon_{k,s}$  is the ideal of integral polynomials $\mathcal P_I$. A  polynomial $P\in\mathcal P(^kX)$ is \emph{integral} if  there exists a regular  Borel
measure $\mu$, of bounded variation on $(B_{X^*}, w^*)$ such that
$$ P(x) =\int_{B_{X^*}} \phi(x)^k\ d\mu(\phi),$$ for all $x\in X.$
The integral norm of  $P$ is given by
$$\| P\|_{\mathcal{P}_{I}(^kX)} =\inf\left\{|\mu|(B_{X^*})\right\},$$
where the infimum is taken over all the measures $\mu$ representing
$P$ as above.

The minimal hull of $\mathcal P_I$ is the ideal of nuclear polynomials $\mathcal P_N$.  A polynomial $P\in\mathcal P(^kX)$ is
\emph{nuclear} if it can be written as $P(x)=\sum_{j=1}^\infty
\lambda_j\phi_j(x)^k$, where $\lambda_j\in\mathbb K$, $\phi_j\in X^*$ for
all $j$ and $\sum_{j=1}^\infty |\lambda_j| \Vert \phi_j\Vert ^k<\infty $. The nuclear norm of $P$ is
\[
\Vert P\Vert _{\mathcal P_N(^kX)}=\inf \left\{\sum_{j=1}^\infty |\lambda_j| \Vert \phi_j\Vert ^k\right\},
\]
where the infimum is taken over all the representations of $P$ as
above.

Aron and Berner showed in~\cite{AB} how to extend continuous polynomials defined on a Banach space $X$ to the bidual $X^{**}$. Given a continuous $k$-homogeneous polynomial $P : E \to
\mathbb{K}$ the `Aron-Berner' extension $\overline{P}$ of $P$ is given by means of the corresponding symmetric $k$-linear form $A$. For a $k$-tuple $(z_1, \dots , z_k) \in X^{**} \times \dots \times X^{**}$, consider
 $$\overline A(z_1, \dots , z_k) := \lim_{i_1} \dots \lim_{i_k} A(x_{i_1},... ,x_{i_k}),$$
where each $(x_{i_j})$ is a net in $X$ which converges to $z_j$ in the weak* topology ($j=1, \dots, k$).  The \emph{Aron-Berner extension} of $P$ is defined by $$\overline{P}(z) := \overline{A}(z,... ,z).$$ Although
the definition of $\overline{A}(z_1, \dots, z_k)$ depends on the order in which one calculates the limits,
the definition of the extended polynomial $\overline{P}$  is independent of the
order used (see \cite{LibroDi,Za,Za1} for further details and properties of this extension).

In 1989 Davie and Gamelin \cite{DavieGamelin98} proved that the Aron-Berner extension preserves the uniform norm. In other words, they proved that this extension is a `real' Hahn-Banach extension.
Recently, Daniel Carando the second author \cite{CarGal} extended this result for maximal and minimal polynomial ideals. More precisely, they showed that every maximal or minimal ideal of
$k$-homogeneous polynomials $\Q$ is closed under the Aron-Berner extension (i.e. for every Banach space $X$ and every polynomial $P$ in $\Q(^kX)$, the Aron-Berner extension $\overline{P}$ is in $\Q(^kX^{**})$). Moreover, the Aron-Berner extension morphism $AB: \Q(^kX) \to \Q(^kX^{**})$ given by $P \mapsto \overline{P}$ is an isometry for every Banach space $X$:
\begin{equation}\label{ab isometria}
\|P\|_{\Q(^kX)}=\|\overline{P}\|_{\Q(^kX^{**})}.
\end{equation}

Given a polynomial ideal $\Q$ closed under the Aron-Berner extension and a continuous linear morphism $s: X^{*} \to Y^{*}$, we can construct the following mapping
$ \overline{s}: \Q(^kX) \to \Q(^kY)$ given by $$\overline{s}(P) := \overline{P} \circ s^{*} \circ J_Y,$$ where $J_Y : Y \to Y^{**}$ is the canonical inclusion.
The mapping $\overline{s}$ is referred as `the extension morphism' of $s$. If $s$ is an isomorphism then $\overline{s}$ is also an isomorphism. Moreover, if $s$ is an isometry and the Aron-Berner extension morphism $AB: \Q(^kX) \to \Q(^kX^{**})$ is an isometric mapping (for example if $\Q$ is maximal or minimal) then it is easy to see that $\overline{s}$ is also an isometry.
For more properties and details about $\overline{s}$ see \cite{LZark,Za}.

\medskip
Recall that a Banach space $X$ is said to have the \emph{approximation property} (AP) if, for every absolutely
convex compact set $K$ and every $\varepsilon > 0$ there is a finite rank operator $T \in \mathcal{L}(E, E)$ with $\|Tx - x \| < \varepsilon$ for all $x \in K$.

The bounded approximation property is a version of this property with control of the norms of the finite rank operators involved. It can be defined equivalently in the following way.  The space $X$ is said to have the \emph{$\lambda$-approximation
property} ($\lambda$-AP) if there is a net $T_\gamma$ of
finite rank operators, with $\|T_\gamma\|\leq\lambda$, such that
$\lim_\gamma \|T_\gamma(x)-x\|=0$ for all $x\in X$. A space
having the $\lambda$-AP for some finite $\lambda$ is said to
have the \emph{bounded approximation property} (BAP). Also, the $1$-AP is called \emph{metric approximation property} (MAP).

\medskip

A closed subspace $X$ of a Banach space $Y$ is an \emph{$M$-ideal} in
$Y$ if $Y^*=X^\perp \oplus_1 X^\sharp$,
where $X^\perp$ is the annihilator of $X$ and $X^\sharp$ is a closed
subspace of $Y^*$. Since $X^\sharp$ can be (isometrically) identified with $X^*$,
it is usual to denote $Y^*=X^\perp \oplus_1 X^*$. However, we often prefer to state explicitly the isometric mapping $s:X^*\to Y^*$, thus obtaining the decomposition $Y^*=X^\perp \oplus_1 s(X^*)$.
The space $X$ is said to be \emph{$M$-embedded} if $X$ is an $M$-ideal in its bidual $X^{**}$.

A Banach space $X$ is \emph{Asplund} (or a strong differentiability space) if every separable subspace $S$ of $X$ has separable continuous dual space $S^*$, or equivalently if its dual space $X^*$ has the Radon  Nikod\'{y}m property \cite{LibroDi,HWW}. Recall that every $M$-embedded space is Asplund \cite[III.3.1]{HWW}.

\medskip
A point
$x \in B_{X}$ is said to be a real (complex) \emph{extreme point} whenever
$\{x + \zeta y : |\zeta | \le 1, \zeta \in \mathbb{R}\} \subset B_{X}$ for $y\in X$ yields $y = 0$ (resp. $\zeta \in \mathbb{C}$). In complex Banach spaces, it is easy to check that every  extreme point of $B_{X}$ is also a complex extreme point. The converse however is not true, since, for instance, every point of $S_{\ell_{1}}$  is a complex extreme point of $B_{\ell_{1}}$.  We denote by $Ext (B_X)$ the set of  real extreme points of the ball $B_X$.
When $X$ is an $M$-ideal in
$Y$, we have the following equality for the sets of extreme points of the
 unit balls \cite[Lemma I.1.5]{HWW}:
$$
Ext(B_{Y^*})=Ext(B_{X^\perp})\cup Ext(B_{X^*}).
$$

\section{Integral polynomials on $M$-ideals} \label{integral polynomials on M-ideals}

If $X$ is an $M$-ideal in $Y$ then, by \cite[Prop. VI.3.1]{HWW}, the associativity of the $\varepsilon$-norm and the transitivity of $M$-ideals, it results that $\widehat\bigotimes_{\varepsilon_k}^{k} X$ is an $M$-ideal in $\widehat\bigotimes_{\varepsilon_k}^{k} Y$. This clearly implies that any $k$-linear integral form on $X$ (being an element of the dual of $\widehat\bigotimes_{\varepsilon_k}^{k} X$) has a unique (integral) norm preserving extension to a $k$-linear integral form on $Y$.

The intuition leads us to think that the same happens in the symmetric case. That is, if $X$ is an $M$-ideal in $Y$ then  $\widehat\bigotimes_{\varepsilon_{k,s}}^{k,s} X$ should be an $M$-ideal in $\widehat\bigotimes_{\varepsilon_{k,s}}^{k,s} Y$ and any integral $k$-homogeneous polynomial on $X$ should have a unique (integral) norm preserving extension to an integral $k$-homogeneous polynomial on $Y$. But things are not always as we expected them to be.

We will see in this section how some properties could change completely and others remain the same in the symmetric case. For real Banach spaces the $M$-ideal condition is {\bf never} (except trivial cases) maintained for symmetric injective tensor products. On the other hand, on $M$-embedded spaces, both in the real and complex settings,  the unique norm preserving extension of integral polynomials holds.

The negative result for real Banach spaces derives from a characterization of the extreme points of integral polynomials  which is interesting by itself. We make use of some results and ideas from \cite{BR} and \cite{BL}, pushing things a little more to obtain a general statement.

In \cite{BR} Boyd and Ryan  investigated the set of  extreme points of the unit ball of  $\mathcal{P}_{I}(^kX)$, for $k>1$,  and they showed the following facts:\

 \begin{enumerate}
\item[$(a)$] For a real Banach space $X$, $\{\pm\phi^k: \phi \in S_{X^*}$ and $\phi$ attains its norm $\} \subseteq Ext(B_{\mathcal{P}_{I}(^kX)})$.\

\item[$(b)$] For a real or complex Banach space $X$, $Ext(B_{\mathcal{P}_{I}(^kX)})\subseteq \{\pm \phi^k: \phi \in S_{X^*}\}$ (see also \cite{CD}).

\end{enumerate}

In \cite{BL} Boyd and Lassalle  proved that if $X$ is a real Banach space, $X^*$ has the
approximation property and  $\widehat{\otimes}^{k,s}_{\varepsilon_{k,s}}X$ does
not contain a copy of $\ell_{1}$, then $Ext (B_{\mathcal{P}_{I}(^kX)})$ is in fact the set  $\{\pm\phi^k: \phi \in S_{X^*}\}$. In the following theorem we show that the hypotheses of their result are not necessary.

\begin{theorem}\label{Extreme points} For a real Banach space  $X$ and a positive integer $k>1$, the set of real extreme points of the unit ball of  $\mathcal{P}_{I}(^kX)$ is  $\{\pm \phi^k: \phi \in S_{X^*}\}$.

\end{theorem}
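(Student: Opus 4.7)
The plan is to bypass the norm-attainment hypothesis present in fact $(a)$ by moving to the bidual via the Aron-Berner extension. Since fact $(b)$ already gives the inclusion $\mathrm{Ext}(B_{\mathcal{P}_{I}(^kX)})\subseteq\{\pm\phi^k:\phi\in S_{X^*}\}$, I only need to prove the reverse inclusion: for every $\phi\in S_{X^*}$, the polynomial $\phi^k$ is a real extreme point of $B_{\mathcal{P}_{I}(^kX)}$.

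First, I would observe that although $\phi$ need not attain its norm on $B_X$, its canonical image $\hat\phi:=\kappa_{X^*}(\phi)\in X^{***}$ has norm one and is $w^*$-continuous on $X^{**}$; hence, by $w^*$-compactness of $B_{X^{**}}$, $\hat\phi$ attains its norm on $B_{X^{**}}$. Applying Boyd--Ryan's fact $(a)$ to the real Banach space $X^{**}$ with the functional $\hat\phi$, we obtain that $\hat\phi^k$ is a real extreme point of $B_{\mathcal{P}_{I}(^kX^{**})}$.

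Next, I would identify $\hat\phi^k$ with the Aron--Berner extension $\overline{\phi^k}$. Indeed, for $z\in X^{**}$, the iterated $w^*$-limits in the definition of the Aron--Berner extension of the symmetric $k$-linear form $(x_1,\dots,x_k)\mapsto\phi(x_1)\cdots\phi(x_k)$ yield $\overline{\phi^k}(z)=z(\phi)^k=\hat\phi(z)^k$. Now suppose
\begin{equation*}
\phi^k=\tfrac{1}{2}(P+Q),\qquad P,Q\in B_{\mathcal{P}_{I}(^kX)}.
\end{equation*}
Applying the Aron--Berner extension morphism, which is linear and, by the Carando--Galicer result (\ref{ab isometria}) applied to the maximal ideal $\mathcal{P}_I$, is an isometry for the integral norm, we get
\begin{equation*}
\hat\phi^k=\overline{\phi^k}=\tfrac{1}{2}\bigl(\overline{P}+\overline{Q}\bigr),\qquad \|\overline{P}\|_{\mathcal{P}_I(^kX^{**})},\ \|\overline{Q}\|_{\mathcal{P}_I(^kX^{**})}\le 1.
\end{equation*}
Extremality of $\hat\phi^k$ in $B_{\mathcal{P}_{I}(^kX^{**})}$ forces $\overline{P}=\overline{Q}=\hat\phi^k$. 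Restricting back to $\kappa_X(X)\subset X^{**}$ then yields $P=Q=\phi^k$, proving the required extremality.

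The conceptual heart of the argument — and the main point where one could slip up — is verifying that the Aron--Berner extension behaves properly for the integral norm, so that convex combinations in $B_{\mathcal{P}_{I}(^kX)}$ are transferred to convex combinations in $B_{\mathcal{P}_{I}(^kX^{**})}$. This relies crucially on the isometric property (\ref{ab isometria}) for maximal ideals, without which the passage to the bidual would lose the norm control. Once that is in hand, the desired removal of the approximation-property and $\ell_1$-free hypotheses of Boyd--Lassalle is automatic, since norm attainment at the level of $X^{**}$ is free of charge.
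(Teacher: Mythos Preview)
Your proof is correct and follows essentially the same approach as the paper: both use fact $(b)$ for one inclusion, and for the reverse inclusion exploit that $\phi\in S_{X^*}$, viewed in $X^{***}$, is norm-attaining so that fact $(a)$ applies over $X^{**}$, and then transfer extremality back via the isometric Aron--Berner extension $AB:\mathcal{P}_I(^kX)\hookrightarrow\mathcal{P}_I(^kX^{**})$. The paper phrases this last step abstractly as $Ext(B)\cap A\subseteq Ext(A)$ for an isometric inclusion $A\subset B$, whereas you unfold it explicitly, but the argument is the same.
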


\begin{proof}
Let $\phi \in S_{X^*}$. Since  it is clear that $\phi$ is a norm attaining element of $S_{X^{***}}$,  by the previous comment $(a)$,  $\phi^k$ is an extreme point of the unit ball of $\mathcal{P}_{I}(^kX^{**})$.

Inspired by the proof of Lemma 1 of \cite{BL}, we will use the fact that $Ext(B)\cap A\subseteq Ext(A)$ whenever $A \subseteq B$. Consider the isometric inclusion
\begin{equation*}
AB:\mathcal{P}_{I}(^kX) \overset1\hookrightarrow \mathcal{P}_{I}(^kX^{**})
\end{equation*}
given by the Aron-Berner extension morphism $P \mapsto \overline{P}$. Thus,
\begin{equation*}
Ext (B_{\mathcal{P}_{I}(^kX^{**})})\cap B_{\mathcal{P}_{I}(^kX)}\subseteq Ext(B_{\mathcal{P}_{I}(^kX)}).
\end{equation*}
Finally,
 \begin{center}
$\{\pm\phi^k: \phi \in S_{X^*} \} \subseteq Ext (B_{\mathcal{P}_{I}(^kX^{**})})\cap B_{\mathcal{P}_{I}(^kX)}\subseteq  Ext (B_{\mathcal{P}_{I}(^kX)})\subseteq \{\pm \phi^k: \phi \in S_{X^*}\}$.
 \end{center}

\end{proof}
\begin{remark}\rm
The previous result is not true for complex Banach spaces. Indeed, Dineen \cite[Prop. 4.1]{Di} proves that, if $X$ is a complex Banach space, then $Ext (B_{\mathcal{P}_{I}(^kX)})$ is contained in $\{\phi^k:$  $\phi$ is a complex extreme point of  $B_{X^*}\}$. Let us consider $X$ the complex space $\ell_{1}$. It is clear that $\phi=(0,1,\ldots,1,\ldots)\in  \ell_{\infty}$ is not a complex  extreme point of $B_{\ell_{\infty}}$. Hence,  $\phi^k$ could not be an extreme point of $B_{\mathcal{P}_{I}(^k\ell_{1})}$.\
\end{remark}

\begin{remark}\rm
Although the spaces $\mathcal{P}_{I}(^kX)$ and $\mathcal{L}_{I}(^kX)$  can be isomorphic (for example if $X$ is stable \cite{AF}), they are very different from a geometric point of view since the set  $Ext (B_{\mathcal{L}_{I}(^kX)})$ is equal to $\{ \phi_{1}\phi_{2}\cdots \phi_{k}: \phi_{i} \in Ext (B_{X^*})\}$ (see \cite{RS,BR}).
\end{remark}

The last  characterization of the extreme points of the ball of integral polynomials leads us to show that for a real Banach space $X$, $\widehat\bigotimes_{\varepsilon_{k,s}}^{k,s} X$ could not be an $M$-ideal in $\widehat\bigotimes_{\varepsilon_{k,s}}^{k,s} X^{**}$, unless $X$ is reflexive. As we have already said, this is a big difference with what happens in the non symmetric case where for $X$ an $M$-embedded space it holds that  $\widehat\bigotimes_{\varepsilon_k}^{k} X$ is an $M$-ideal in $\widehat\bigotimes_{\varepsilon_k}^{k} X^{**}$ \cite[Prop. VI.3.1]{HWW}.

\begin{theorem}
If the real Banach space $X$ is not reflexive, then $\widehat\bigotimes_{\varepsilon_{k,s}}^{k,s} X$ could not be an $M$-ideal in $\widehat\bigotimes_{\varepsilon_{k,s}}^{k,s} X^{**}$.
\end{theorem}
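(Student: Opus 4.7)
The plan is to derive a contradiction by combining the $M$-ideal bookkeeping recalled in the preliminaries with the extreme-point description supplied by Theorem \ref{Extreme points}.

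Assume $\widehat\bigotimes^{k,s}_{\varepsilon_{k,s}} X$ were an $M$-ideal in $\widehat\bigotimes^{k,s}_{\varepsilon_{k,s}} X^{**}$, yielding
\[
\mathcal{P}_I(^kX^{**}) \;=\; \bigl(\widehat\bigotimes\nolimits^{k,s}_{\varepsilon_{k,s}} X\bigr)^{\perp} \oplus_{1} s\bigl(\mathcal{P}_I(^kX)\bigr).
\]
By Proposition I.1.12 of \cite{HWW}, every integral polynomial on $X$ has a unique norm-preserving extension to $\mathcal{P}_I(^kX^{**})$, and the isometric embedding $s$ is precisely this extension map. Since the Aron-Berner morphism already produces a norm-preserving extension by \eqref{ab isometria}, the first step is the identification $s=AB$; in particular $s(\phi^{k})=\overline{\phi^{k}}=\kappa_{X^{*}}(\phi)^{k}$ for each $\phi\in X^{*}$.

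Next I would invoke the extreme-point identity for $M$-ideals recalled in the preliminaries,
\[
Ext(B_{\mathcal{P}_I(^kX^{**})}) \;=\; Ext\bigl(B_{(\widehat\bigotimes^{k,s}_{\varepsilon_{k,s}} X)^{\perp}}\bigr) \cup Ext\bigl(B_{s(\mathcal{P}_I(^kX))}\bigr),
\]
and describe both sides by means of Theorem \ref{Extreme points}, which computes the extreme points of the ball of integral polynomials on any real Banach space (applied once to $X$ and once to $X^{**}$). Using $s(\phi^{k})=\kappa_{X^{*}}(\phi)^{k}$ together with the fact that the isometry $AB$ sends extreme points to extreme points, this equality rewrites as
\[
\{\pm\psi^{k}:\psi\in S_{X^{***}}\} \;=\; Ext\bigl(B_{(\widehat\bigotimes^{k,s}_{\varepsilon_{k,s}} X)^{\perp}}\bigr) \cup \{\pm\kappa_{X^{*}}(\phi)^{k}:\phi\in S_{X^{*}}\}.
\]

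The contradiction now comes out at once. For any $\psi\in S_{X^{***}}\setminus\kappa_{X^{*}}(S_{X^{*}})$, the extreme point $\psi^{k}$ must lie in the annihilator $\bigl(\widehat\bigotimes^{k,s}_{\varepsilon_{k,s}} X\bigr)^{\perp}$; evaluating on the elementary tensor $\otimes^{k}x$ gives $\psi(\kappa_{X}x)^{k}=0$ for every $x\in X$, so $\psi$ vanishes on $\kappa_{X}(X)$. Consequently $X^{***}$ equals the set-theoretic union $\kappa_{X^{*}}(X^{*})\cup(\kappa_{X}(X))^{\perp}$ of two linear subspaces with trivial intersection, and the standard \emph{a vector space is not the union of two proper subspaces} argument forces one to contain the other: $X^{***}=\kappa_{X^{*}}(X^{*})$ would make $X$ reflexive, contradicting the hypothesis, while $X^{***}=(\kappa_{X}(X))^{\perp}$ forces $X=\{0\}$.

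The only delicate step I anticipate is the identification $s=AB$, which relies on both the uniqueness-of-Hahn-Banach-extensions property of $M$-ideals and the Carando-Galicer theorem that the Aron-Berner morphism preserves the integral norm. Once this is in place, the rest is pure bookkeeping with Theorem \ref{Extreme points}, and non-reflexivity is exactly what makes the two sides of the $M$-ideal decomposition incompatible.
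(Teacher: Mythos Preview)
Your proof is correct and follows essentially the same approach as the paper: assume the $M$-ideal structure, combine the extreme-point splitting $Ext(B_{Y^*})=Ext(B_{X^\perp})\cup Ext(B_{X^\sharp})$ with Theorem~\ref{Extreme points} applied to both $X$ and $X^{**}$, and reach a contradiction from non-reflexivity. Two small presentational differences are worth noting. First, you make explicit the identification $s=AB$ (via uniqueness of norm-preserving extensions on $M$-ideals together with \eqref{ab isometria}); the paper uses this implicitly when it writes $Ext(B_{\mathcal P_I(^kX)})=\{\pm\phi^k:\phi\in S_{X^*}\}$ inside $\mathcal P_I(^kX^{**})$, so your care here is welcome. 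Second, for the final contradiction the paper works with the direct-sum decomposition $X^{***}=X^*\oplus X^\perp$ and exhibits a concrete $\phi=\phi_1+\phi_2$ with both components nonzero, while you argue that $X^{***}$ would be the set-theoretic union $\kappa_{X^*}(X^*)\cup(\kappa_X X)^\perp$ and invoke the ``a vector space is not the union of two proper subspaces'' fact---these are the same argument in different clothing.
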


\begin{proof}
Suppose that $\widehat\bigotimes_{\varepsilon_{k,s}}^{k,s} X$ is an $M$-ideal in $\widehat\bigotimes_{\varepsilon_{k,s}}^{k,s} X^{**}$. Then we would have:
$$
Ext (B_{\mathcal{P}_{I}(^kX^{**})})= Ext (B_{\mathcal{P}_{I}(^kX)}) \cup Ext \Big(B_{(\widehat\bigotimes^{k,s}_{\varepsilon_{k,s}}X)^\perp}\Big).
$$
By the description of extreme points of integral polynomials of the previous theorem, this equality would imply
$$
Ext \Big(B_{(\widehat\bigotimes^{k,s}_{\varepsilon_{k,s}}X)^\perp}\Big)=\{\pm \phi^k: \phi \in S_{X^{***}}\setminus S_{X^*}\}.
$$
This is not possible since through the decomposition $X^{***}=X^* \oplus X^\perp$ if we choose $\phi\in S_{X^{***}}$ such that $\phi=\phi_1+\phi_2$, with $\phi_1\in X^*$, $\phi_2\in X^\perp$, $\phi_1, \phi_2\not= 0$, then
$\phi\in S_{X^{***}}\setminus S_{X^*}$ but $\phi^k\not\in (\widehat\bigotimes^{k,s}_{\varepsilon_{k,s}}X)^\perp$.
\end{proof}

With almost the same argument (only changing the decomposition $X^{***}=X^* \oplus X^\perp$ by $Y^{*}=X^* \oplus_1 X^\perp$) we derive the following:

\begin{theorem} \label{no es M-ideal}
If $X$ and $Y$ are real Banach spaces and $X$ is a nontrivial $M$-ideal in $Y$, then $\widehat\bigotimes_{\varepsilon_{k,s}}^{k,s} X$ is not an $M$-ideal in $\widehat\bigotimes_{\varepsilon_{k,s}}^{k,s} Y$.
\end{theorem}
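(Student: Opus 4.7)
The plan is to mirror the argument of the preceding theorem (the case $Y = X^{**}$), replacing the canonical direct-sum decomposition $X^{***} = \kappa_{X^{*}}(X^{*}) \oplus X^{\perp}$ by the $\ell_{1}$-decomposition $Y^{*} = s(X^{*}) \oplus_{1} X^{\perp}$ supplied by the $M$-ideal structure, where $s : X^{*} \to Y^{*}$ is the isometric section that assigns to each $\phi \in X^{*}$ its unique norm-preserving extension to $Y$.

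Assume for contradiction that $\widehat{\bigotimes}^{k,s}_{\varepsilon_{k,s}} X$ is an $M$-ideal in $\widehat{\bigotimes}^{k,s}_{\varepsilon_{k,s}} Y$. Dualizing, we obtain an $\ell_{1}$-decomposition
$$
\mathcal{P}_{I}(^{k}Y) \;=\; \Big(\widehat{\bigotimes}^{k,s}_{\varepsilon_{k,s}} X\Big)^{\perp} \;\oplus_{1}\; \widetilde{s}\big(\mathcal{P}_{I}(^{k}X)\big),
$$
with $\widetilde{s}:\mathcal{P}_{I}(^{k}X)\to\mathcal{P}_{I}(^{k}Y)$ an isometric embedding, and by Lemma I.1.5 of \cite{HWW},
$$
Ext\big(B_{\mathcal{P}_{I}(^{k}Y)}\big) \;=\; \widetilde{s}\big(Ext(B_{\mathcal{P}_{I}(^{k}X)})\big)\;\cup\;Ext\Big(B_{(\widehat{\bigotimes}^{k,s}_{\varepsilon_{k,s}} X)^{\perp}}\Big).
$$
The crucial ingredient is the identification of $\widetilde{s}$ on extreme points: for $\phi\in S_{X^{*}}$ the polynomial $s(\phi)^{k}$ restricts on $X$ to $\phi^{k}$ and has integral norm $\|s(\phi)\|^{k}=1=\|\phi^{k}\|_{\mathcal{P}_{I}(^{k}X)}$, so the uniqueness of norm-preserving extensions afforded by the $M$-ideal property forces $\widetilde{s}(\phi^{k})=s(\phi)^{k}$. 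Applying Theorem \ref{Extreme points} to both $X$ and $Y$ then yields
$$
\{\pm \psi^{k}:\psi\in S_{Y^{*}}\} \;=\; \{\pm s(\phi)^{k}:\phi\in S_{X^{*}}\}\;\cup\;Ext\Big(B_{(\widehat{\bigotimes}^{k,s}_{\varepsilon_{k,s}} X)^{\perp}}\Big).
$$

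To reach a contradiction I use nontriviality: since $X\neq\{0\}$ and $X\neq Y$, pick $\phi_{1}\in X^{*}\setminus\{0\}$ and $\phi_{2}\in X^{\perp}\setminus\{0\}$, and let $\psi$ be the normalization of $s(\phi_{1})+\phi_{2}$ in $S_{Y^{*}}$. By the displayed equality, either $\psi^{k}=\pm s(\phi)^{k}$ for some $\phi\in S_{X^{*}}$, or $\psi^{k}\in(\widehat{\bigotimes}^{k,s}_{\varepsilon_{k,s}} X)^{\perp}$. In the first case a short argument (the closed zero sets of the real linear forms $\psi\mp s(\phi)$ cover $Y$, and a Banach space is never the union of two proper closed subspaces) gives $\psi=\pm s(\phi)\in s(X^{*})$; in the second, $\psi(x)^{k}=0$ for every $x\in X$ yields $\psi\in X^{\perp}$. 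Both alternatives contradict the directness of $Y^{*}=s(X^{*})\oplus_{1}X^{\perp}$, since by construction $\psi$ has nonzero components in both summands. The main technical step is the identification $\widetilde{s}(\phi^{k})=s(\phi)^{k}$; once in hand, the rest is a faithful transcription of the preceding proof, with $Y^{*}=s(X^{*})\oplus_{1}X^{\perp}$ playing the role of the splitting of $X^{***}$.
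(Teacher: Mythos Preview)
Your argument is correct and follows precisely the route the paper indicates (it says explicitly that Theorem~\ref{no es M-ideal} is obtained ``with almost the same argument, only changing the decomposition $X^{***}=X^*\oplus X^\perp$ by $Y^*=X^*\oplus_1 X^\perp$''). You are in fact more careful than the paper on two points it glosses over: the identification $\widetilde{s}(\phi^k)=s(\phi)^k$ via uniqueness of norm-preserving extensions, and the deduction $\psi=\pm s(\phi)$ from $\psi^k=\pm s(\phi)^k$ using the covering-by-kernels argument.
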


\begin{remark}\rm
As it will be stated in Lemma \ref{convergencia}, in a maximal ideal of polynomials, the $w^*$-convergence of a bounded net is equivalent to the pointwise convergence. This implies that if two maximal polynomial ideals  have the same set of extreme points of the unit balls, then they are the same ideal (isometrically). So, from Theorem \ref{Extreme points}, if $\mathcal Q$ is a maximal ideal of $k$-homogeneous polynomials that satisfies that, on a real Banach space $X$, the set of extreme points of its unit ball is $\{\pm \phi^k: \phi \in S_{X^*}\}$, then it should be $\mathcal Q(^kX)=\mathcal P_I(^kX)$.
\end{remark}

We have no idea how symmetric tensor products on $M$-ideals behave in the complex setting.

\begin{question} If $X$ is a nontrivial $M$-ideal in $Y$, $X$ and $Y$ complex Banach spaces, could $\widehat\bigotimes_{\varepsilon_{k,s}}^{k,s} X$ be an $M$-ideal in $\widehat\bigotimes_{\varepsilon_{k,s}}^{k,s} Y$?
\end{question}

\bigskip

Aron, Boyd and Choi \cite[Prop. 7]{ABCh} prove that if $X$ is an $M$-ideal in $X^{**}$ then the Aron-Berner extension is the unique norm preserving extension from $\mathcal P_N(^kX)$ to $\mathcal P_N(^kX^{**})$. Their argument can be easily adapted to the situation of $X$ being an $M$-ideal in $Y$. Recall that in this case the natural inclusion $s:X^*\to Y^*$ induces a canonical isometry $\overline s:\mathcal P_N(^kX)\to\mathcal P_N(^kY)$ (see the explanation at the end of the previous section).

\begin{proposition}
Let $X$ be an $M$-ideal in $Y$ and let $s:X^*\to Y^*$ be the associated isometric inclusion. For each $P\in \mathcal P_N(^kX)$, $\overline s(P)$ is the unique norm preserving extension to $\mathcal P_N(^kY)$.
\end{proposition}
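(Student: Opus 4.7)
The plan is to adapt the Aron--Boyd--Choi argument from \cite{ABCh}, applying the $M$-ideal decomposition $Y^* = X^\perp \oplus_1 s(X^*)$ termwise on a near-optimal nuclear representation of an arbitrary extension. Existence and norm-preservation of $\overline{s}(P)$ are essentially already available: since $s$ is an isometric embedding and $\mathcal{P}_N$ is minimal (so the Aron--Berner morphism is an isometry by \cite{CarGal}), the extension morphism $\overline{s}:\mathcal{P}_N(^kX)\to\mathcal{P}_N(^kY)$ is itself an isometry, as noted at the end of the preliminaries, and a direct computation gives $s^*J_Y\kappa_X^{-1}=\kappa_X$ on $X$, hence $\overline{s}(P)|_X=\overline{P}|_X=P$. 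All the substance of the argument is uniqueness.

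For uniqueness, let $Q\in\mathcal{P}_N(^kY)$ satisfy $Q|_X=P$ and $\|Q\|_{\mathcal{P}_N(^kY)}=\|P\|_{\mathcal{P}_N(^kX)}$. Fix $\varepsilon>0$ and pick a nuclear representation $Q=\sum_l\mu_l\,\psi_l^k$ with $\sum_l|\mu_l|\,\|\psi_l\|^k<\|P\|_{\mathcal{P}_N(^kX)}+\varepsilon$. Using the $M$-ideal decomposition, split each linear form as $\psi_l=\psi_l^\perp+s(\phi_l)$ with $\psi_l^\perp\in X^\perp$, $\phi_l\in X^*$, and crucially $\|\psi_l\|=\|\psi_l^\perp\|+\|\phi_l\|$. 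Since each $\psi_l^\perp$ vanishes on $X$, restriction yields the nuclear representation $P=\sum_l\mu_l\,\phi_l^k$, so $\|P\|_{\mathcal{P}_N(^kX)}\leq\sum_l|\mu_l|\,\|\phi_l\|^k$. Combining this with the elementary superadditive inequality $(a+b)^k\geq a^k+b^k$ for $a,b\geq 0$ produces the key estimate
\[ \sum_l|\mu_l|\,\|\psi_l^\perp\|^k < \varepsilon. \]

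By linearity, $\overline{s}(P)=\sum_l\mu_l\,s(\phi_l)^k$, and the difference $Q(y)-\overline{s}(P)(y)$ expands binomially as $\sum_l\mu_l\sum_{m=1}^k\binom{k}{m}s(\phi_l)(y)^{k-m}\psi_l^\perp(y)^m$. For $y\in B_Y$, H\"older's inequality with conjugate exponents $k/(k-m)$ and $k/m$ bounds each cross-sum:
\[ \sum_l|\mu_l|\,\|\phi_l\|^{k-m}\|\psi_l^\perp\|^m \leq \bigl(\|P\|_{\mathcal{P}_N(^kX)}+\varepsilon\bigr)^{(k-m)/k}\,\varepsilon^{m/k}. \]
Since every surviving term has $m\geq 1$, summing over $m$ gives $\|Q-\overline{s}(P)\|_\infty\to 0$ as $\varepsilon\to 0$, forcing $Q=\overline{s}(P)$.

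The main obstacle I anticipate is the bookkeeping of these cross-terms: the argument works precisely because the $\ell_1$-nature of the $M$-ideal decomposition gives the superadditive bound $\|\psi_l\|^k\geq\|\psi_l^\perp\|^k+\|\phi_l\|^k$ with no slack, so the two constraints on $\sum|\mu_l|\|\phi_l\|^k$ and $\sum|\mu_l|\|\psi_l^\perp\|^k$ combine through H\"older to yield the vanishing factor $\varepsilon^{m/k}$; any weaker decomposition would fail to close this estimate.
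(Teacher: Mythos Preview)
Your argument is correct and follows the same strategy the paper invokes (it does not spell out a proof here, merely citing \cite[Prop.~7]{ABCh} and saying the argument adapts; the computation is displayed in the paper's proof of Theorem~\ref{unique extension integrales}). The one cosmetic difference is in the final bookkeeping: after the binomial expansion you bound each cross-sum $\sum_l|\mu_l|\,\|\phi_l\|^{k-m}\|\psi_l^\perp\|^m$ separately via H\"older, obtaining factors $\varepsilon^{m/k}$, whereas the paper (and \cite{ABCh}) resums the binomial terms first,
\[
\sum_l|\mu_l|\sum_{m=1}^k\binom{k}{m}\|\phi_l\|^{k-m}\|\psi_l^\perp\|^m
=\sum_l|\mu_l|\bigl(\|\phi_l\|+\|\psi_l^\perp\|\bigr)^k-\sum_l|\mu_l|\,\|\phi_l\|^k
=\sum_l|\mu_l|\,\|\psi_l\|^k-\sum_l|\mu_l|\,\|\phi_l\|^k<\varepsilon,
\]
which is slightly cleaner and avoids H\"older altogether. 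A tiny notational slip: the line ``$s^*J_Y\kappa_X^{-1}=\kappa_X$ on $X$'' should read $s^*J_Y|_X=\kappa_X$; the conclusion $\overline{s}(P)|_X=P$ is of course unaffected.
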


We want to prove a similar statement for integral polynomials. If $X$ is an Asplund space (which always holds when $X$ is an $M$-ideal in $X^{**}$) we will have a positive result. In this case, nuclear and integral polynomials over $X$ coincide isometrically \cite{BR,CD}. So, by the previous proposition, there is only one nuclear norm preserving extension to $Y$. But if $Y$ is not Asplund we could presumably have integral non nuclear extensions of the same integral norm. We will show that this is impossible to happen.

The result is a consequence of the following lemma.

\begin{lemma}\label{lemma2 integrales con extension unica}
Let $X$ be an Asplund space which is a subspace of a Banach space $Y$. Let $Q \in \mathcal P_I(^kY)$. Given $\varepsilon > 0$ there exists $\widetilde{Q} \in \mathcal P_N(^kY)$ such that $Q$ and $\widetilde{Q}$ coincide on $X$ and
$$\|\widetilde{Q} \|_{\mathcal P_N(^kY)} \leq \|Q\|_{\mathcal P_I(^kY)} + \varepsilon.$$
\end{lemma}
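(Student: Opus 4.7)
The plan is to reduce to a nuclear representation on $X$ and then Hahn--Banach extend the linear functionals appearing in it to $Y^*$. The key fact is the one recalled in the preliminaries: if $X$ is Asplund, then $\mathcal P_I(^kX) \overset 1 = \mathcal P_N(^kX)$ (this is the Boyd--Ryan / Carando--Dimant coincidence used earlier in the section).

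First I would restrict $Q$ to $X$. If $\mu$ is a regular Borel measure on $(B_{Y^*},w^*)$ representing $Q$ with $|\mu|(B_{Y^*})$ close to $\|Q\|_{\mathcal P_I(^kY)}$, then the push-forward of $\mu$ under the restriction map $R\colon B_{Y^*}\to B_{X^*}$, $\psi \mapsto \psi|_X$, represents $Q|_X$ and has variation bounded by $|\mu|(B_{Y^*})$. Hence $\|Q|_X\|_{\mathcal P_I(^kX)} \le \|Q\|_{\mathcal P_I(^kY)}$. (Equivalently, this is just the metric mapping property of $\varepsilon_{k,s}$ applied to the inclusion $X\hookrightarrow Y$.) Using that $X$ is Asplund, $\|Q|_X\|_{\mathcal P_N(^kX)} = \|Q|_X\|_{\mathcal P_I(^kX)} \le \|Q\|_{\mathcal P_I(^kY)}$.

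Now, given $\varepsilon>0$, choose a nuclear representation
\[
Q|_X(x) = \sum_{j=1}^\infty \lambda_j\,\phi_j(x)^k, \qquad x\in X,
\]
with $\phi_j \in X^*$ and $\sum_{j=1}^\infty |\lambda_j|\,\|\phi_j\|^k \le \|Q|_X\|_{\mathcal P_N(^kX)} + \varepsilon \le \|Q\|_{\mathcal P_I(^kY)} + \varepsilon$. By Hahn--Banach, for each $j$ pick $\widetilde{\phi}_j \in Y^*$ extending $\phi_j$ with $\|\widetilde{\phi}_j\| = \|\phi_j\|$, and define
\[
\widetilde Q(y) := \sum_{j=1}^\infty \lambda_j\,\widetilde{\phi}_j(y)^k, \qquad y\in Y.
\]
The series converges absolutely to a nuclear polynomial on $Y$, and by construction $\widetilde Q|_X = Q|_X$, while
\[
\|\widetilde Q\|_{\mathcal P_N(^kY)} \le \sum_{j=1}^\infty |\lambda_j|\,\|\widetilde{\phi}_j\|^k = \sum_{j=1}^\infty |\lambda_j|\,\|\phi_j\|^k \le \|Q\|_{\mathcal P_I(^kY)} + \varepsilon,
\]
as required.

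There is no real obstacle: the only ingredients are (i) restriction is contractive for the integral norm, (ii) the Boyd--Ryan / Carando--Dimant identification $\mathcal P_I = \mathcal P_N$ on Asplund spaces, and (iii) Hahn--Banach for each $\phi_j$. The step one has to be a little careful about is keeping track of \emph{which} norm bounds what, namely that one invokes the nuclear norm on $X$ (not on $Y$) to produce the representation, and only then pays the Hahn--Banach extension cost, which is zero in norm.
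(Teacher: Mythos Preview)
Your proof is correct and follows essentially the same route as the paper: restrict $Q$ to $X$, use the Asplund hypothesis to get $\|Q|_X\|_{\mathcal P_N(^kX)}=\|Q|_X\|_{\mathcal P_I(^kX)}\le\|Q\|_{\mathcal P_I(^kY)}$, pick an almost-optimal nuclear representation on $X$, and Hahn--Banach extend each functional to $Y$. The only difference is that you spell out the contractivity of restriction for the integral norm, which the paper uses implicitly.
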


\begin{proof}
Since the restriction of $Q$ to $X$ is nuclear, we can take sequences $(\phi_j)_j \subset X^*$ and $(\lambda_j)_j\subset\mathbb{K}$ such that $Q|_X = \sum_{j=1}^\infty \lambda_j\phi_j^k$ and
\begin{align*}
\sum_{j=1}^\infty |\lambda_j|\|\phi_j\|^k & \leq \|Q|_X \|_{\mathcal P_N(^kX)} + \varepsilon \\
& = \|Q|_X \|_{\mathcal P_I(^kX)} + \varepsilon \\
& \leq \|Q \|_{\mathcal P_I(^kY)} + \varepsilon.
\end{align*}
For each $j$, let $\widetilde\phi_j$ be a Hahn-Banach extension of $\phi_j$ to $Y$. If we define $\widetilde{Q} = \sum_{j=1}^\infty \lambda_j\widetilde\phi_j^k$, then $\widetilde{Q}$ coincides with $Q$ in $X$ and $$\|\widetilde{Q} \|_{\mathcal P_N(^kY)} \leq  \sum_{j=1}^\infty |\lambda_j|\|\widetilde\phi_j\|^k =  \sum_{j=1}^\infty |\lambda_j|\|\phi_j\|^k \leq \|Q\|_{\mathcal P_I(^kY)} + \varepsilon.$$
\end{proof}

\begin{theorem} \label{unique extension integrales}
Let $X$ be an Asplund space which is an $M$-ideal in a Banach space $Y$ and let $s:X^*\to Y^*$ be the associated isometric inclusion. If $P\in\mathcal P_I(^kX)$ then the canonical extension $\overline s(P)$ is the unique norm preserving extension  to $\mathcal P_I(^kY)$.
\end{theorem}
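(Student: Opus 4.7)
The proof splits into existence and uniqueness. For existence, since $X$ is Asplund, $\mathcal{P}_I(^kX) \overset{1}{=} \mathcal{P}_N(^kX)$ by the results of Boyd-Ryan and Carando-Dimant cited earlier, and the preceding proposition yields $\overline{s}(P) \in \mathcal{P}_N(^kY)$ with $\|\overline{s}(P)\|_{\mathcal{P}_N(^kY)} = \|P\|_{\mathcal{P}_N(^kX)} = \|P\|_{\mathcal{P}_I(^kX)}$. Combining the contractive inclusion $\mathcal{P}_N(^kY) \hookrightarrow \mathcal{P}_I(^kY)$ with the ideal property of $\mathcal{P}_I$ under restriction to $X$ immediately gives $\|\overline{s}(P)\|_{\mathcal{P}_I(^kY)} = \|P\|_{\mathcal{P}_I(^kX)}$, so $\overline{s}(P)$ is a norm-preserving integral extension.

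For uniqueness, suppose $Q \in \mathcal{P}_I(^kY)$ extends $P$ with $\|Q\|_{\mathcal{P}_I(^kY)} = \|P\|_{\mathcal{P}_I(^kX)}$. The idea is to represent $Q$ by a nearly optimal signed Borel measure $\mu$ on $(B_{Y^*}, w^*)$ and then exploit the $M$-ideal decomposition $Y^* = X^\perp \oplus_1 s(X^*)$, which writes every $\psi \in Y^*$ uniquely as $\psi = \alpha + s(\phi)$ with $\|\psi\| = \|\alpha\| + \|\phi\|$. Splitting $\mu = \mu_1 + \mu_2$ according to the partition $B_{Y^*} = s(B_{X^*}) \sqcup (B_{Y^*}\setminus s(B_{X^*}))$, a direct computation gives $Q_1 := \int_{s(B_{X^*})}\psi^k\, d\mu_1 = \overline{s}(P_1)$ where $P_1 = Q_1|_X$. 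For $\psi$ in the support of $\mu_2$ one has $\|\phi\| < \|\psi\| \le 1$, so the pushforward of $\mu_2$ by $\psi \mapsto \psi|_X$ yields a representing measure for $P_2 := Q_2|_X$ supported strictly inside the open unit ball of $X^*$; norm-preservation together with the additivity $\|P\|_{\mathcal{P}_I} = \|P_1\|_{\mathcal{P}_I} + \|P_2\|_{\mathcal{P}_I}$ forces this pushforward to be optimal for $P_2$.

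The main obstacle is showing $P_2 = 0$, which will give $\mu_2 = 0$ and hence $Q = Q_1 = \overline{s}(P)$. To handle this, I would invoke Lemma \ref{lemma2 integrales con extension unica} together with the uniqueness from the preceding proposition: in the $M$-ideal setting, the Hahn-Banach extensions of the $\phi_j \in X^*$ that appear in the lemma's construction are forced to equal $s(\phi_j)$, so the lemma's output nuclear polynomial coincides canonically with $\overline{s}$ applied to the chosen nuclear representation of $P_2$. Combining this identification with the Asplund property (which provides optimal nuclear representations of $P_2$ via discrete measures on $S_{X^*}$) contradicts the optimality of an interior representing measure unless $P_2 = 0$. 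The delicate technical point is arguing rigorously that on an Asplund space, every optimal integral representing measure must concentrate on $S_{X^*}$, so that the strict interior support of the pushforward of $\mu_2$ forces its vanishing.
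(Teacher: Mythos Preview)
Your existence argument is fine. The uniqueness argument has a genuine gap at exactly the point you flag as ``delicate'', and your proposed resolution of it does not work. Invoking Lemma~\ref{lemma2 integrales con extension unica} and the preceding proposition only tells you that $P_2$ admits a near-optimal nuclear representation by functionals on $S_{X^*}$; it does not contradict the existence of \emph{another} optimal representing measure $\nu_2$ concentrated in the open ball, since optimal representing measures are not unique. What is really needed is the direct fact you name but do not prove: any representing measure concentrated on the open unit ball of $X^*$ is strictly non-optimal unless it is zero. This is true and does not even require the Asplund hypothesis --- one radializes, pushing $\|\phi\|^k\,d\nu$ forward along $\phi\mapsto\phi/\|\phi\|$ to obtain a representing measure on $S_{X^*}$ of total variation $\int\|\phi\|^k\,d|\nu|<|\nu|$ --- but this requires checking Borel measurability of $\phi\mapsto\phi/\|\phi\|$ for the $w^*$-topology and handling signed pushforwards with some care, none of which you supply. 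The appeal to Lemma~\ref{lemma2 integrales con extension unica} is a red herring for this step.

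The paper's proof takes a completely different and much shorter route, avoiding measures on $B_{Y^*}$ altogether. Assuming $Q\neq\overline s(P)$, it fixes $y\in S_Y$ with $\delta:=|Q(y)-\overline s(P)(y)|>0$ and applies Lemma~\ref{lemma2 integrales con extension unica} to the Asplund space $X\oplus[y]$ to produce a nuclear $\widetilde Q=\sum_j\lambda_j\phi_j^k$ on $Y$ that agrees with $Q$ on $X\oplus[y]$ and satisfies $\sum_j|\lambda_j|\|\phi_j\|^k\le\|P\|_{\mathcal P_I}+\delta/2$. Decomposing each $\phi_j=s(\phi_j|_X)+\phi_j^\perp$ via the $M$-ideal identity $\|\phi_j\|=\|\phi_j|_X\|+\|\phi_j^\perp\|$, one checks that $\overline s(P)=\sum_j\lambda_j\,s(\phi_j|_X)^k$, and then a direct binomial expansion yields
\[
\delta\le\sum_j|\lambda_j|\|\phi_j\|^k-\sum_j|\lambda_j|\|\phi_j|_X\|^k\le\|P\|_{\mathcal P_I}+\tfrac{\delta}{2}-\|P\|_{\mathcal P_I}=\tfrac{\delta}{2},
\]
a contradiction. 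This is the intended use of Lemma~\ref{lemma2 integrales con extension unica}: it converts the integral extension $Q$ into a nuclear one on which the $\ell_1$-splitting of $Y^*$ can be exploited termwise.
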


\begin{proof}
The argument is modeled on the proof of \cite[Prop. 7]{ABCh}. We include all the steps for the sake of completness.

Let $P \in \mathcal P_I(^kX)$ and suppose there exists a norm preserving extension $Q \in \mathcal P_I(^kY)$ different from $\overline s(P)$.
Pick $y$ a norm one vector in $Y$ such that $0 < \delta = |Q(y) - \overline s(P)(y)|.$

Note that $X \oplus [y]$ is an Asplund space since $X$ also is. So, by Lemma \ref{lemma2 integrales con extension unica} applied to $X \oplus [y]$, there exists $\widetilde{Q} \in \mathcal P_N(^kY)$ such that $Q$ and $\widetilde{Q}$ coincide on $X \oplus [y]$  and
\begin{align*}
\|\widetilde{Q} \|_{\mathcal P_N(^kY)} & \leq \|Q\|_{\mathcal P_I(^kY)} + \frac{\delta}{4} \\
& = \|P\|_{\mathcal P_I(^kX)} + \frac{\delta}{4}.
\end{align*}
Take a nuclear representation of  $\widetilde{Q} = \sum_{j=1}^\infty \lambda_j\phi_j^k$ such that $\sum_{j=1}^\infty |\lambda_j|\|\phi_j\|^k \leq \|P\|_{\mathcal P_I(^kX)} + \frac{\delta}{2}.$
Since $X$ is an $M$-ideal in $Y$ each $\phi_j\in Y^*$ can be written as the sum of $s\big(\phi_j|_X\big)$ and $\phi_j^\bot$.
Moreover, $\|\phi_j \|= \| s\big(\phi_j|_X\big) \| + \|\phi_j^\bot \|$.

Recall that $\widetilde{Q}$ coincides with $P$ on $X$, thus, for every $x \in X$,
$$P(x) =  \sum_{j=1}^\infty \lambda_j\big( s\big(\phi_j|_X\big)(x)  + \phi_j^\bot (x) \big)^k = \sum_{j=1}^\infty \lambda_j\phi_j|_X(x)^k.$$
Using this, we easily get that $\overline s(P) = \sum_{j=1}^\infty \lambda_j \Big(s\big(\phi_j|_X\big)\Big)^k.$ Naturally,
$$
 \|P\|_{\mathcal P_I(^kX)} =\|P\|_{\mathcal P_N(^kX)} =\|\overline s(P)\|_{\mathcal P_N(^kX)}\le \sum_{j=1}^\infty |\lambda_j|\| \phi_j|_X \|^k.
$$


Now,
\begin{align*}
0 < \delta & = \big|Q(y) - \overline s(P)(y)\big|  = \big|\widetilde Q(y) - \overline s(P)(y)\big| \\
& \leq \left| \sum_{j=1}^\infty \lambda_j\big( s\big(\phi_j|_X\big)(y)  + \phi_j^\bot (y) \big)^k  - \lambda_j s\big(\phi_j|_X\big)(y)^k\right| \\
& \leq \sum_{j=1}^\infty |\lambda_j|\sum_{i=1}^k  \binom ki \big\| s\big(\phi_j|_X\big) \big\|^{k-i}  \| \phi_j^{\bot} \|^i  \\
& = \sum_{j=1}^\infty |\lambda_j|\Big( \big\| s\big(\phi_j|_X\big) \big\|  + \| \phi_j^\bot \| \Big)^k  - |\lambda_j|\big\| s\big(\phi_j|_X\big) \big\|^k \\
&= \sum_{j=1}^\infty |\lambda_j|\| \phi_j \|^k - \sum_{j=1}^\infty |\lambda_j|\| \phi_j|_X \|^k\\
& \leq \|P\|_{\mathcal P_I(^kX)} + \frac{\delta}{2} - \|P\|_{\mathcal P_I(^kX)} = \frac{\delta}{2}.
\end{align*}
This is a contradiction. Thus, the result follows.
\end{proof}

Since $M$-embedded spaces are Asplund  we have a neater statement in this case:

\begin{corollary}
Let $X$ be an  $M$-ideal in  $X^{**}$.  If $P\in\mathcal P_I(^kX)$ then the Aron-Berner extension $\overline P$ is the unique norm preserving extension  to $\mathcal P_I(^kX^{**})$.
\end{corollary}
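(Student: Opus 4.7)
The plan is to obtain the corollary as an immediate consequence of Theorem \ref{unique extension integrales}, applied with $Y = X^{**}$. Two things need to be checked: first, that the hypotheses of that theorem are satisfied, and second, that for the canonical $M$-ideal decomposition of $X$ in $X^{**}$ the extension morphism $\overline s$ coincides with the Aron-Berner extension morphism.

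For the hypotheses, $X$ is an $M$-ideal in $X^{**}$ by assumption, and $X$ is automatically Asplund because (as recorded in the preliminaries, citing \cite[III.3.1]{HWW}) every $M$-embedded space is Asplund. Thus Theorem \ref{unique extension integrales} applies and gives, for every $P\in\mathcal P_I(^kX)$, that $\overline s(P)$ is the unique norm preserving extension to $\mathcal P_I(^kX^{**})$, where $s\colon X^{*}\to X^{***}$ is the isometric inclusion associated to the decomposition $X^{***}=X^{\perp}\oplus_1 s(X^{*})$.

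The step that needs a word of justification is the identification of $\overline s(P)$ with $\overline P$. In the $M$-embedded case the isometry $s\colon X^{*}\to X^{***}$ is precisely the canonical inclusion $\kappa_{X^{*}}$. By definition (see the end of Section 1),
\[
\overline s(P) \;=\; \overline{P}\circ s^{*}\circ J_{X^{**}} \;=\; \overline{P}\circ \kappa_{X^{*}}^{*}\circ \kappa_{X^{**}}.
\]
Since $\kappa_{X^{*}}^{*}\circ \kappa_{X^{**}}=\mathrm{id}_{X^{**}}$ (the standard identity expressing that $\kappa_{X^{*}}$ is the predual map of the canonical inclusion), we conclude $\overline s(P)=\overline P$, and the corollary follows.

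I do not expect any real obstacle here: the corollary is essentially a specialization of Theorem \ref{unique extension integrales}. The only subtle point is the bookkeeping identity $\kappa_{X^{*}}^{*}\circ \kappa_{X^{**}}=\mathrm{id}_{X^{**}}$, which guarantees that the ``canonical extension via $s$'' used in Theorem \ref{unique extension integrales} reduces to the Aron-Berner extension in this setting.
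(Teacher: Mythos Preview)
Your argument is correct and follows exactly the paper's approach: the corollary is deduced from Theorem \ref{unique extension integrales} with $Y=X^{**}$, using that $M$-embedded spaces are Asplund. You are in fact more careful than the paper, which leaves the identification $\overline s(P)=\overline P$ implicit; your verification via $\kappa_{X^{*}}^{*}\circ \kappa_{X^{**}}=\mathrm{id}_{X^{**}}$ is correct and welcome.
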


It is known that in $\ell_\infty$  integral and nuclear polynomials do not coincide. Consider thus a non nuclear polynomial $P\in \mathcal P_I(^k\ell_\infty)$. By the fact that $c_0$ is an $M$-ideal in $\ell_\infty$ and the previous corollary, we derive that the restriction of $P$ to $c_0$ should have integral (equivalently, nuclear) norm strictly smaller than the integral norm of $P$.

\bigskip

  As we commented before, if $X$ is Asplund (actually, in the more general case of $\ell_{1} \not\hookrightarrow  \widehat{\bigotimes}^{k,s}_{\varepsilon_{k,s}}X$) the spaces of nuclear  and integral polynomials concide isometrically.  So, this is the case for most of the classical Banach spaces.

The situation is quite different for Banach spaces containing $\ell_1$. We will see below that if $X$ is Banach space that contains a subspace isomorphic to $\ell_1$ and whose dual has the MAP, then the quotient space $\mathcal{P}_{I}(^kX)/\mathcal{P}_{N}(^kX)$ is non separable. To see this, first we look at the case of $X=\ell_1$. Using Theorem \ref{loc-compl} and arguing as in the proof of \cite[Prop. 2.4]{CPV} we can clarify  how the containment of $\mathcal{P}_{N}(^k\ell_{1})$ in $\mathcal{P}_{I}(^k\ell_{1})$ is.

\begin{proposition} \
The quotient space $ \mathcal{P}_{I}(^k\ell_{1})/\mathcal{P}_{N}(^k\ell_{1})$ contains a subspace isometric to $\ell_{\infty}/ c_{0}$. Moreover,
$\mathcal{P}_{N}(^k\ell_{1})$ is not complemented in a dual space.
\end{proposition}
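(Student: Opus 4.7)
Following [CPV, Prop.~2.4], the strategy is to construct an isometric embedding of $\ell_\infty/c_0$ into $\mathcal{P}_I(^k\ell_1)/\mathcal{P}_N(^k\ell_1)$ via diagonal polynomials, and then to deduce the non-complementation assertion from the presence of this copy and Theorem~\ref{loc-compl}.

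I would define the ``diagonal'' map $\iota:\ell_\infty\to\mathcal{P}_I(^k\ell_1)$ by
\[
\iota(a)(x) := \sum_{n\ge 1} a_n\, x_n^k, \qquad a\in\ell_\infty,\;x\in\ell_1,
\]
and check that it is an isometric linear embedding. The lower bound $\|\iota(a)\|_{\mathcal{P}_I}\ge\sup_n|a_n|$ follows from testing $\iota(a)$ against the elementary tensors $\otimes^k e_n\in\widehat{\bigotimes}^{k,s}_{\varepsilon_{k,s}}\ell_1$, each of $\varepsilon_{k,s}$-norm one. The reverse inequality $\|\iota(a)\|_{\mathcal{P}_I}\le\|a\|_\infty$ is obtained by exhibiting an explicit Radon integral representation of $\iota(a)$ on $(B_{\ell_\infty},w^*)$ of total mass $\|a\|_\infty$: for $k=2$ this is the classical Rademacher symmetrization $\iota(a)(x)=\int_0^1(\sum_n\sqrt{|a_n|}\,r_n(t)\,\mathrm{sign}(a_n)\,x_n)^2\,dt$ composed with the sign decomposition, and for general $k$ one decomposes $a=a^+-a^-$ and invokes the extreme-point description from Theorem~\ref{Extreme points} to build $\iota(a)$ as a barycenter of the extreme points $\{\pm\phi^k\}$ of $B_{\mathcal P_I(^k\ell_1)}$. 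Next, since the expansion $\iota(a)=\sum_n a_n(e_n^*)^k$ is a convergent nuclear series for every $a\in\ell_1$, one has $\iota(\ell_1)\subset\mathcal{P}_N(^k\ell_1)$; as $\overline{\ell_1}^{\|\cdot\|_\infty}=c_0$, the composition with the quotient map factors through an induced map $\tilde\iota:\ell_\infty/c_0\to\mathcal{P}_I(^k\ell_1)/\mathcal{P}_N(^k\ell_1)$ of norm at most~$1$.

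The core of the argument is to verify that $\tilde\iota$ is an \emph{isometry}, i.e.\
\[
\mathrm{dist}_{\mathcal P_I}\bigl(\iota(a),\,\mathcal P_N(^k\ell_1)\bigr) \;=\; \|a+c_0\|_{\ell_\infty/c_0} \;=\; \limsup_n |a_n|.
\]
The ``$\le$'' direction is obtained by approximating $a$ by finitely supported truncations, whose $\iota$-images are finite-rank (hence nuclear). For the delicate ``$\ge$'' direction the plan is to lift the problem to the bidual using the Aron--Berner isometry~\eqref{ab isometria} together with Theorem~\ref{loc-compl}: since $\ell_1^{**}$ has the BAP (as an $L_1$-space), $\mathcal{P}_I(^k\ell_1^{**})$ is a complemented subspace of $\mathcal{P}_I(^k\ell_1)^{**}$, so that evaluation of $\overline{\iota(a)-Q}$ makes sense on all of $\ell_1^{**}$. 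Combined with the canonical splitting $\ell_1^{**}=\ell_1\oplus c_0^\perp$ (where $c_0^\perp\subset\ell_\infty^*\cong\ell_1^{**}$ is isometric to $(\ell_\infty/c_0)^*$), one can show that Aron--Berner extensions of nuclear polynomials on $\ell_1$ carry no ``$c_0^\perp$-mass'' (by density, reducing to nuclear representations built from coordinate-like functionals), while $\overline{\iota(a)}$ does carry such a component of norm exactly $\|a+c_0\|_{\ell_\infty/c_0}$. This provides the required lower estimate.

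For the second assertion, I use the standard fact that a Banach space $E$ complemented in some dual space is also complemented in its own bidual (compose a projection $F^*\to E$ with the canonical projection $\kappa_F^*:F^{***}\twoheadrightarrow F^*$). Assuming for contradiction that $\mathcal{P}_N(^k\ell_1)$ is complemented in a dual space, the resulting projection $\pi:\mathcal{P}_N(^k\ell_1)^{**}\to\mathcal{P}_N(^k\ell_1)$ transfers---via the inclusion $\mathcal{P}_N(^k\ell_1)\hookrightarrow\mathcal{P}_I(^k\ell_1)\hookrightarrow\mathcal{P}_I(^k\ell_1)^{**}$ together with Theorem~\ref{loc-compl}---to a bounded projection $\mathcal{P}_I(^k\ell_1)\twoheadrightarrow\mathcal{P}_N(^k\ell_1)$. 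Hence the quotient $\mathcal{P}_I(^k\ell_1)/\mathcal{P}_N(^k\ell_1)$ would be isomorphic to a complemented subspace of the dual space $\mathcal{P}_I(^k\ell_1)=(\widehat{\bigotimes}^{k,s}_{\varepsilon_{k,s}}\ell_1)^*$, and by the first part this would realize $\ell_\infty/c_0$ as a complemented subspace of a dual Banach space, contradicting (via a standard lifting argument) Phillips' theorem on the non-complementation of $c_0$ in $\ell_\infty$.

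The principal obstacle is the ``$\ge$'' direction of the quotient isometry: showing that no nuclear polynomial---however cleverly represented in $\mathcal{P}_N$-norm---can approximate $\iota(a)$ in integral norm better than $\limsup_n|a_n|$. The natural route passes to $\ell_1^{**}$ via Aron--Berner and isolates the $c_0^\perp$-component, but this requires careful density/approximation arguments within $\mathcal{P}_N(^k\ell_1)$, which is where the interplay with Theorem~\ref{loc-compl} is used in its full strength.
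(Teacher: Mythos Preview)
Your plan shares with the paper the diagonal embedding $\iota=\delta:\ell_\infty\to\mathcal P_I(^k\ell_1)$ and the commutative diagram over the two short exact sequences, but it misses the structural identification on which the paper's argument rests and, as a consequence, the ``$\ge$'' step in your quotient isometry breaks down.

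The paper does \emph{not} apply Theorem~\ref{loc-compl} to $X=\ell_1$. Its key move is to recognise the inclusion $\mathcal P_N(^k\ell_1)\hookrightarrow\mathcal P_I(^k\ell_1)$ as precisely the map
\[
\Theta_{\pi_{k,s}}:\widehat{\bigotimes}^{k,s}_{\pi_{k,s}}c_0^{**}\;\longrightarrow\;\Big(\widehat{\bigotimes}^{k,s}_{\pi_{k,s}}c_0\Big)^{**},
\]
using that $\mathcal P_N(^k\ell_1)=\widehat{\bigotimes}^{k,s}_{\pi_{k,s}}\ell_\infty$ (AP of $\ell_\infty$) and $\mathcal P_I(^k\ell_1)=(\widehat{\bigotimes}^{k,s}_{\varepsilon_{k,s}}\ell_1)^*=(\widehat{\bigotimes}^{k,s}_{\pi_{k,s}}c_0)^{**}$ (since every polynomial on $c_0$ is approximable). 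Theorem~\ref{loc-compl} is then invoked with $X=c_0$, yielding that $\mathcal P_N(^k\ell_1)$ is locally $1$-complemented in $\mathcal P_I(^k\ell_1)$. With this exact-sequence picture in hand, the paper simply adapts \cite[Prop.~2.4]{CPV}: both the isometric embedding of $\ell_\infty/c_0$ into the quotient and the non-complementation statement follow from the same mechanism as in \cite{CPV}, because one is now literally in the situation ``$Z^{**}\hookrightarrow Y^{**}$ via $\Theta$'' that \cite{CPV} treats.

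Your route instead applies Theorem~\ref{loc-compl} to $X=\ell_1$, which relates $\mathcal P_I(^k\ell_1^{**})$ to $\mathcal P_I(^k\ell_1)^{**}$ --- a different pair --- and then attempts a direct Aron--Berner argument on $\ell_1^{**}=\ell_1\oplus_1 c_0^\perp$. The crucial claim, that ``Aron--Berner extensions of nuclear polynomials on $\ell_1$ carry no $c_0^\perp$-mass'', is false: take $Q=\phi^k$ with $\phi=(1,1,1,\dots)\in\ell_\infty$; then $\overline Q(z)=z(\phi)^k$, and for $z\in c_0^\perp\setminus\{0\}$ one has $z(\phi)\ne 0$ in general. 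Your proposed fix by density (``reduce to nuclear representations built from coordinate-like functionals'') does not help, because polynomials built from functionals in $c_0$ span only $\widehat{\bigotimes}^{k,s}_{\pi_{k,s}}c_0$, a proper closed subspace of $\mathcal P_N(^k\ell_1)=\widehat{\bigotimes}^{k,s}_{\pi_{k,s}}\ell_\infty$. So there is a genuine gap in the lower bound for the quotient norm, and also in the transfer step of your non-complementation argument (the projection $\mathcal P_N^{**}\to\mathcal P_N$ does not obviously produce a projection $\mathcal P_I\to\mathcal P_N$ without the local-complementation identification above). The remedy is exactly the paper's identification: work with $X=c_0$ in Theorem~\ref{loc-compl}, place yourself in the \cite{CPV} framework, and run their argument.
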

\begin{proof}
The approximation property of $\ell_\infty$ gives us the equality $\mathcal{P}_{N}(^k\ell_{1})=\widehat{\bigotimes}^{k,s}_{\pi_{k,s}}\ell_{\infty}$. Also, since every $k$-homogeneous polynomial on $c_0$ is approximable, we have $\mathcal{P}_{I}(^k\ell_{1})=(\widehat{\bigotimes}^{k,s}_{\varepsilon_{k,s}}\ell_{1})^*=(\widehat{\bigotimes}^{k,s}_{\pi_{k,s}}c_{0})^{**}$. Thus, from \cite[Cor. 7]{CG} or Theorem \ref{loc-compl} in the following section, we obtain that $\mathcal{P}_{N}(^k\ell_{1})$ is a locally 1-complemented subspace of  $\mathcal{P}_{I}(^k\ell_{1})$ (see definition in next section). We can thus picture the following exact sequence (the image of each arrow coincides with the kernel of the next one):
$$
\begin{CD}
0@>>> \mathcal{P}_{N}(^k\ell_{1})=\widehat{\bigotimes}^{k,s}_{\pi_{k,s}}\ell_{\infty} @>{\Theta_{\pi_{k,s}}} >> \mathcal{P}_{I}(^k\ell_{1})=(\widehat{\bigotimes}^{k,s}_{\pi_{k,s}}c_{0})^{**}  @>q>> \mathcal{P}_{I}(^k\ell_{1})/\mathcal{P}_{N}(^k\ell_{1})@>>>0.\\
\end{CD}
$$
Consider the isometric embedding $\delta:\ell_{\infty} \to \mathcal{P}_{I}(^k\ell_{1})$ given by

\begin{equation*}
\phi \longmapsto \left((x_n)_n\in \ell_1 \longmapsto \delta(\phi)(x_n)=\sum_{n=1}^{\infty}\phi(n)x^k_{n} \right).\\
\end{equation*}
 It is clear that $\delta(c_{0})=\delta(\ell_{\infty})\cap\mathcal{P}_{N}(^k\ell_{1})$ and that $q\circ \delta$ factorizes through the quotient $\ell_{\infty}/c_{0}$. Then there is an isometric embedding  $\ell_{\infty}/c_{0}\hookrightarrow  \mathcal{P}_{I}(^k\ell_{1})/\mathcal{P}_{N}(^k\ell_{1})$  that makes commutative the following diagram

 $$
\begin{CD}
 0@>>>c_{0}@>>> \ell_{\infty} @>>>\ell_{\infty}/c_{0}@>>>0\\
& &@V\delta|_{c_{0}} VV @V\delta VV@V VV  \\
 0@>>>\mathcal{P}_{N}(^k\ell_{1}) @>>> \mathcal{P}_{I}(^k\ell_{1})@ >q>>\mathcal{P}_{I}(^k\ell_{1})/\mathcal{P}_{N}(^k \ell_{1})@>>>0.\\
\end{CD}
$$
Now, the proof of  \cite[Prop. 2.4]{CPV} can be adapted easily to our setting to obtain the desired result.
\end{proof}

For a Banach space $X$ whose dual has the MAP, the space of nuclear polynomials $\mathcal{P}_{N}(^k X)$ coincides with the projective tensor product $\widehat{\bigotimes}_{\pi_{k,s}}^{k,s}X^*$  and it is contained isometrically in the space of integral polynomials  $\mathcal{P}_{I}(^kX)=\big(\widehat{\bigotimes}_{\varepsilon_{k,s}}^{k,s}X\big)^*$ (see \cite[Duality Theorem]{F} or \cite[Cor. 3.4]{CarGal-five}). So, it makes sense to consider the quotient $\mathcal{P}_{I}(^kX)/\mathcal{P}_{N}(^k X)$.

\begin{corollary}  Let $X$ be a Banach space whose dual has the MAP. If  $X$ contains a copy of $\ell_{1}$, then the quotient  space $\mathcal{P}_{I}(^kX)/\mathcal{P}_{N}(^k X)$ is  non separable. \end{corollary}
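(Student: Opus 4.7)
The plan is to reduce to the case $X=\ell_1$ already handled by the previous proposition, by exploiting the injectivity of $\varepsilon_{k,s}$ to push forward non-separability from the quotient over $\ell_1$ to the quotient over $X$.

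First, I would fix an isomorphic embedding $i\colon\ell_1\hookrightarrow X$ provided by the hypothesis. Since $\varepsilon_{k,s}$ is an injective s-tensor norm, the induced tensor operator
\[ \otimes^{k,s} i\colon\widehat{\bigotimes}\nolimits^{k,s}_{\varepsilon_{k,s}}\ell_1\longrightarrow \widehat{\bigotimes}\nolimits^{k,s}_{\varepsilon_{k,s}} X \]
is an isomorphic embedding (the isometric embedding $\ell_1\to i(\ell_1)\subset X$ gives an isometric embedding on the symmetric injective tensor products, and the isomorphism with $\ell_1$ on the image is then composed). Dualizing via Hahn--Banach yields a continuous linear surjection
\[ r\colon \mathcal{P}_I(^kX)\twoheadrightarrow \mathcal{P}_I(^k\ell_1),\qquad P\longmapsto P\circ i. \]

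Next, I would verify that $r$ carries nuclear polynomials to nuclear polynomials: if $P=\sum_j \lambda_j\phi_j^k$ is a nuclear representation on $X$ with $\sum_j|\lambda_j|\|\phi_j\|^k<\infty$, then $P\circ i=\sum_j\lambda_j(\phi_j\circ i)^k$ is a nuclear representation on $\ell_1$ since $\|\phi_j\circ i\|\leq\|i\|\,\|\phi_j\|$. Equivalently, this is just the ideal property of $\mathcal{P}_N$. Consequently, $r$ descends to a bounded linear surjection
\[ \bar r\colon \mathcal{P}_I(^kX)/\mathcal{P}_N(^kX)\twoheadrightarrow \mathcal{P}_I(^k\ell_1)/\mathcal{P}_N(^k\ell_1). \]

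Finally, by the previous proposition the target $\mathcal{P}_I(^k\ell_1)/\mathcal{P}_N(^k\ell_1)$ contains an isometric copy of $\ell_\infty/c_0$, which is non-separable. Since a continuous linear image of a separable space is separable, the non-separability of the target forces $\mathcal{P}_I(^kX)/\mathcal{P}_N(^kX)$ to be non-separable, as desired. The MAP hypothesis on $X^*$ is only used implicitly to make the inclusion $\mathcal{P}_N(^kX)\hookrightarrow\mathcal{P}_I(^kX)$ isometric (via $\widehat{\bigotimes}^{k,s}_{\pi_{k,s}} X^*\hookrightarrow(\widehat{\bigotimes}^{k,s}_{\varepsilon_{k,s}}X)^*$), so that the quotient is well defined as in the statement; no additional obstacle appears. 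The only place where one needs to be even slightly careful is step one, checking that injectivity of $\varepsilon_{k,s}$ together with the (merely isomorphic) embedding $i$ still yields a topological embedding of the symmetric tensor products and hence a surjection on the dual side — but this is immediate once one splits $i$ as isometric embedding onto its image followed by an isomorphism.
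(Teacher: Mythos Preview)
Your argument is correct and follows essentially the same route as the paper: both construct the restriction map $r=R_I\colon\mathcal P_I(^kX)\to\mathcal P_I(^k\ell_1)$, check it is surjective and sends nuclear to nuclear, pass to the quotient, and invoke the previous proposition. The only cosmetic difference is that the paper justifies surjectivity of $r$ by quoting that integral polynomials are extendible, whereas you obtain it by dualizing the isomorphic embedding $\otimes^{k,s}i$ coming from the injectivity of $\varepsilon_{k,s}$; these are two phrasings of the same fact.
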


\begin{proof}
We can suppose with no lost of generality, that $\ell_1$ is a subspace of $X$. Let $i:\ell_1\to X$ the inclusion. Since nuclear and integral polynomials are extendible, the restriction mappings $R_N:\mathcal{P}_{N}(^k X)\to \mathcal{P}_{N}(^k \ell_1)$ and $R_I:\mathcal{P}_{I}(^k X)\to \mathcal{P}_{I}(^k \ell_1)$ are surjective.

By the comment before the corollary, we have metric injections $J_X:\mathcal{P}_{N}(^k X)\to \mathcal{P}_{I}(^k X)$ and $J_{\ell_1}:\mathcal{P}_{N}(^k \ell_1)\to \mathcal{P}_{I}(^k \ell_1)$ that allow us to look at the quotient projections $q_X:\mathcal{P}_{I}(^k X)\to \mathcal{P}_{I}(^kX)/\mathcal{P}_{N}(^k X)$ and $q_{\ell_1}:\mathcal{P}_{I}(^k \ell_1)\to \mathcal{P}_{I}(^k\ell_1)/\mathcal{P}_{N}(^k \ell_1)$.

Thus, we have the diagram
  $$
\begin{CD}
   \mathcal{P}_{N}(^kX)  @>J_{X}>> \mathcal{P}_{I}(^kX) @>q_{X}>> \mathcal{P}_{I}(^kX)/\mathcal{P}_{N}(^kX) \\
@V R_N VV  @V R_I VV@V R VV  \\
 \mathcal{P}_{N}(^k\ell_{1}) @>J_{\ell_1} >> \mathcal{P}_{I}(^k\ell_{1}) @>q_{\ell_1}>> \mathcal{P}_{I}(^k\ell_{1})/\mathcal{P}_{N}(^k\ell_{1}),\\
\end{CD}\\
$$
where the left side square is easily seen to be commutative and the down arrow $R$ is defined  to make the right side square  commutative also. This fact and the surjectivity of $R_I$ and $q_{\ell_1}$  imply that the operator $R$ is onto. Since, by the previous proposition, the image of $R$ is not separable the same should be true for its domain $\mathcal{P}_{I}(^kX)/\mathcal{P}_{N}(^k X)$.
\end{proof}
%

\section{The bidual of a symmetric tensor product}

In order to obtain a characterization of polynomials belonging to an ideal that have unique norm preserving extension, we need to relate the bidual of the symmetric tensor  product of a Banach space with the  symmetric tensor product of its bidual.

More precisely, for $\beta_k$ an s-tensor norm of order $k$,
we study the relationship between the spaces $\widehat{\bigotimes}_{\beta_k}^{k,s}X^{**}$ and $\left(\widehat{\bigotimes}_{\beta_k}^{k,s}X\right)^{**}$. We need the notion of a local complementation.

\begin{definition}\cite{K} \label{def:local-comp}
Let $X$ be a subspace of $Y$ through $i$. We say that $X$ is
locally complemented  in $Y$ (through $i$) if there exists a constant $\lambda > 0$ such that for every  finite dimensional subspace $F\subset Y$  there exists and  operator $r_{F}:F \to X$, $\|r_{F}\|\leq \lambda$, such
that $r_{F}(i(x))=x$ whenever  $i(x)\in F$.
For the quantitative version we will say locally
$\lambda$-complemented.
\end{definition}
It is known that $X$ is locally complemented  in $Y$ (through $i: X \hookrightarrow Y$) if and
only if  $i^*:Y^*\to X^*$ has a left-inverse of bound $\lambda$ (i.e. $X^*$ is $\lambda$-complemented in $Y^*$). The
Principle of Local Reflexivity  of Lindenstrauss and Rosenthal \cite{LR}  says that every Banach space is locally complemented in its bidual. Also, it is well-known that every Banach space is locally
complemented in its ultrapowers.

In this terminology, it is clear that the Aron-Berner extension  ensures that the subspace $\widehat{\bigotimes}_{\beta_k}^{k,s}X$ is locally complemented in $\widehat{\bigotimes}_{\beta_k}^{k,s}X^{**}$ (through $\otimes^{k,s}\kappa_X : \widehat{\bigotimes}_{\beta_k}^{k,s}X \hookrightarrow  \widehat{\bigotimes}_{\beta_k}^{k,s}X^{**}$), then these spaces have the same local structure. Some questions arise: \\
\begin{itemize}
\item
Do $\widehat{\bigotimes}^{k,s}_{\beta_k}X^{**}$ and $(\widehat{\bigotimes}^{k,s}_{\beta_k}X)^{**}$ have the same local structure? \\
The answer is yes if $X^{**}$ has the bounded approximation property (Theorem \ref{loc-compl}).  \\

\item
Is there an application that makes commutative the following diagram?
\begin{eqnarray*}
\xymatrix{{\widehat{\bigotimes}_{\beta_k}^{k,s}X} \ar[rr]^{\otimes^{k,s}\kappa_X} \ar[dr]_{\kappa_{\big(\widehat \bigotimes^{k,s}_{\beta_k}X\big)}} & & {\widehat{\bigotimes}_{\beta_k}^{k,s}X^{**}}  \ar[ld]^{?} \\
&{ \left(\widehat{\bigotimes}_{\beta_k}^{k,s}X\right)^{**}} &  }
\end{eqnarray*}

The answer is always yes and the canonical application is given  by

\begin{eqnarray*}
\Theta_{\beta_k}:\widehat{\bigotimes}_{\beta_k}^{k,s}X^{**}&\longrightarrow
& \left(\widehat{\bigotimes}_{\beta_k}^{k,s}X\right)^{**}=\mathcal P_{\beta_k}(^kX)^*\\
v &\longmapsto &\big(P\mapsto \langle\overline{P},v\rangle\big).\\
\end{eqnarray*}
\end{itemize}

Since the Aron-Berner extension preserves the ideal norm for maximal polynomial ideals \cite[Cor. 3.4]{CarGal}, it is clear that $\|\Theta_{\beta_k}\|=1$. We want to study when $\Theta_{\beta_k}$ is an isomorphic embedding. As often happens when dealing with tensor
products, approximation properties will play a crucial role in
our proofs.
We adapt some techniques  developed in \cite{CG} for the
projective full tensor product $\widehat{\bigotimes}^{k}_{\pi_k}X$ to the
case of a general s-tensor norm $\beta_k$.

Cabello and Garc\'ia proved in \cite{CG} that if $X$ is a Banach space whose
bidual has the BAP, then $\Theta_{\pi}$ embeds $\widehat{\bigotimes}_{\pi_k}^{k}X^{**}$ as a locally
complemented subspace of $\left(\widehat{\bigotimes}_{\pi_k}^{k}X\right)^{**}$. Equivalently,  $\mathcal{L}(^{k}X^{**})$ is a complemented subspace of
$\mathcal{L}(^{k}X)^{**}$ for all $k\geq 1$.

We propose here a version of  this result for symmetric tensor products endowed with any $s$-tensor norm $\beta_k$. To prove it, we need first a lemma.\

\begin{lemma}\label{bapduals}
Let $\beta_k$ be an s-tensor norm of order $k$. If $X^{**}$ has the $\lambda$-AP, then  $\widehat{\bigotimes}^{k,s}_{\beta_k}X^{**}$ has the $\lambda^k$-AP. Moreover, there is a net of
finite rank operators $(t_\gamma)_\gamma$ with $t_\gamma:X\to X^{**}$ such that the operators   $T_\gamma =
\otimes^{k,s}t_\gamma^{**}:
\widehat{\bigotimes}^{k,s}_{\beta_k}X^{**}\to
\widehat{\bigotimes}^{k,s}_{\beta_k} X^{**}$ satisfy $\|T_\gamma\|\leq \lambda^k$
and
$$
\lim_\gamma \beta_k \big( T_\gamma (v)- v \big)=0,\qquad \textrm{for all }
v\in \widehat{\bigotimes}^{k,s}_{\beta_k} X^{**}.
$$
\end{lemma}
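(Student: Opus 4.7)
The proof will split into two parts: first the bare statement that $\widehat{\bigotimes}^{k,s}_{\beta_k}X^{**}$ inherits the $\lambda^k$-AP, and second the ``moreover'' refinement about the specific form $t_\gamma^{**}$ of the approximating operators. The first part is a routine application of the metric mapping property; the second requires promoting the approximating operators on $X^{**}$ to be weak$^*$-to-weak$^*$ continuous, which is where the Principle of Local Reflexivity (PLR) enters.

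For the first assertion, I take a net $(S_\gamma)$ of finite-rank operators on $X^{**}$ with $\|S_\gamma\|\le\lambda$ and $S_\gamma \to \mathrm{Id}_{X^{**}}$ pointwise, provided by the $\lambda$-AP. Define $V_\gamma := \otimes^{k,s} S_\gamma$ on $\widehat{\bigotimes}^{k,s}_{\beta_k}X^{**}$. Each $V_\gamma$ has finite rank (its range lies in the symmetric tensor power of $S_\gamma(X^{**})$), and the metric mapping property yields $\|V_\gamma\|\le \lambda^k$. On an elementary symmetric tensor $\otimes^k z$, the telescoping identity
\[
\otimes^k S_\gamma(z) - \otimes^k z = \sum_{j=0}^{k-1} (\otimes^j S_\gamma(z)) \otimes (S_\gamma(z)-z) \otimes (\otimes^{k-1-j} z),
\]
together with $\beta_k\le \pi_{k,s}$, $\pi_{k,s}(\otimes^k u)\le \|u\|^k$, and the uniform bound $\|S_\gamma(z)\|\le \lambda\|z\|$, forces $\beta_k(V_\gamma(\otimes^k z) - \otimes^k z)\to 0$. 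A standard $\varepsilon/3$-argument, using density of the span of elementary tensors in $\widehat{\bigotimes}^{k,s}_{\beta_k}X^{**}$ and the uniform bound $\sup_\gamma\|V_\gamma\|\le \lambda^k$, extends this to strong convergence on the whole completion.

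For the ``moreover'' part, the key observation is that whenever $t\colon X\to X^{**}$ has finite rank with range $G\subset X^{**}$, its biadjoint $t^{**}\colon X^{**}\to X^{****}$ actually has range in $G\subset X^{**}$, so $t^{**}$ defines a weak$^*$-to-weak$^*$ continuous finite-rank operator on $X^{**}$; conversely, every such operator arises this way. The task therefore reduces to producing a net of weak$^*$-continuous approximating operators on $X^{**}$ with norm $\le\lambda$. Given a finite set $F\subset X^{**}$ and $\varepsilon>0$, start from a finite-rank $S\colon X^{**}\to X^{**}$, $\|S\|\le\lambda$, with $\|S(z)-z\|<\varepsilon$ on $F$; writing $S=\sum_{i=1}^n \eta_i\otimes \xi_i$ with $\eta_i\in X^{***}$ and $\xi_i\in X^{**}$, PLR (equivalently, a Helly--Goldstine argument applied to the finite-dimensional range of $S$ together with appropriate test functionals in $X^*$) provides $\phi_i\in X^*$ so that $S'(z):=\sum \phi_i(z)\xi_i$ is weak$^*$-continuous, close to $S$ on $F$, and has $\|S'\|\le \lambda(1+\varepsilon)$. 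Then $S' = t^{**}$ for $t(x):=\sum \phi_i(x)\xi_i\colon X\to X^{**}$. The exact bound $\|t_\gamma\|\le \lambda$ (rather than $\lambda(1+\varepsilon)$) is recovered at the end by indexing the $t_\gamma$ over the directed set of pairs $(F,\varepsilon)$ and passing to the limit. Setting $T_\gamma:=\otimes^{k,s}t_\gamma^{**}$ and repeating the telescoping argument above completes the proof.

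The main obstacle is the ``moreover'' clause: the bare existence statement is the easy half, but producing approximating operators of the specific form $t_\gamma^{**}$ is where one must work, and the delicate point is keeping the norm bound sharp under the PLR-based conversion from a general approximating $S$ to a weak$^*$-continuous $S'=t^{**}$. Everything else is a tensor-power lifting that is standard once one has the right net on $X^{**}$.
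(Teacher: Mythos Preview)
Your proposal is correct and follows the same overall strategy as the paper: obtain a $\lambda$-bounded approximating net on $X^{**}$ of the special form $t_\gamma^{**}$ with $t_\gamma\colon X\to X^{**}$ finite rank, and then lift to the symmetric tensor power via the metric mapping property and a density argument. The only substantive difference is packaging: the paper does not carry out the PLR conversion you sketch but instead invokes \cite[Cor.~1]{CG} directly for the existence of the net $(t_\gamma)$ with $\|t_\gamma\|\le\lambda$; your inline argument is essentially a reconstruction of that corollary (and the rescaling trick to pass from $\lambda(1+\varepsilon)$ to $\lambda$ is the standard fix). For the convergence on elementary tensors, the paper uses the polynomial duality $\pi_{k,s}(w)=\sup_{\|P\|\le 1}|\langle P,w\rangle|$ and the binomial expansion of $P(t_\gamma^{**}z)-P(z)$, whereas you use a telescoping identity in the full tensor product; these are equivalent, but note a small technical point in your version: the individual telescoping terms are not symmetric, so $\pi_{k,s}$ does not apply to them directly---you must either pass through the full projective norm $\pi_k$ (incurring a harmless polarization constant $k^k/k!$) or switch to the polynomial-duality computation as the paper does.
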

\begin{proof}
By \cite[Cor. 1]{CG}, the $\lambda$-AP of $X^{**}$ can be realized by mappings $t_\gamma^{**}$ where $t_\gamma:X\to X^{**}$ are finite rank operators satisfying $\|t_\gamma\|\le\lambda$.
This implies that the operators $T_\gamma$ have finite rank and  $\|T_\gamma\|=\|t_\gamma^{**}\|^k\leq \lambda^k$.

Now, to see that the net $\{T_\gamma\}_\gamma$ approximates the
identity, let us begin with an elementary tensor
$\otimes^k z\in \bigotimes^{k,s}_{\beta_k}X^{**}$. We
have,

\begin{eqnarray*}
\beta_k \big( T_\gamma(\otimes^k z)-\otimes^k z\big) & \leq &
\pi_{k,s} \big( T_\gamma(\otimes^k z)-\otimes^k z\big) \\
&=& \sup_{P\in
B_{\mathcal{P}(^kX^{**})}}\big|\langle P,T_\gamma(\otimes^k z)-\otimes^k z\rangle\big|\\
&=& \sup_{P\in
B_{\mathcal{P}(^kX^{**})}}\big| P(t_\gamma^{**}z)-P(z)\big|\\
&=& \sup_{P\in B_{\mathcal{P}(^kX^{**})}} \left| \sum_{j=0}^{k-1}
\binom{k}{j} \overset\vee P \big(z^j,(t_\gamma^{**}z-z)^{k-j}\big)\right|\\
&\leq & \sup_{P\in B_{\mathcal{P}(^kX^{**})}} \sum_{j=0}^{k-1}
\binom{k}{j} \|\overset\vee P\|\cdot \|z\|^j\cdot
\|t_\gamma^{**}z-z\|^{k-j}\longrightarrow 0.
\end{eqnarray*}
As a consequence, for any finite sum $u=\sum_{i=1}^N \lambda_i \otimes^k z_i$
it holds $\beta_k \big( T_\gamma(u)-u \big)\to 0$. Finally, since an
arbitrary $v\in \widehat{\bigotimes}^{k,s}_{\beta_k} X^{**}$ can be
approximated by finite sums of elementary tensors and the norms of
the operators $T_\gamma$ are bounded, we get that
$\beta_k \big( T_\gamma(v)-v \big) \to 0$.

\end{proof}

\begin{theorem}\label{loc-compl}
 Let $\beta_k$ be an s-tensor norm of order $k$. If $X^{**}$ has the $\lambda$-AP, then
$\Theta_{\beta_k}$ embeds $\widehat{\bigotimes}^{k,s}_{\beta_k}X^{**}$ as a locally $\lambda^k$-complemented subspace of
$\left(\widehat{\bigotimes}^{k,s}_{\beta_k}X\right)^{**}$. Equivalently, $\mathcal{P}_{\beta_k}(^kX^{**})$ is a
$\lambda^k$-complemented subspace of $\mathcal{P}_{\beta_k}(^kX)^{**}$.
\end{theorem}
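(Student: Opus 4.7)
The plan is to establish both assertions simultaneously by invoking the characterization recorded after Definition~\ref{def:local-comp}: it suffices to build a left inverse $L\colon \mathcal{P}_{\beta_k}(^kX^{**})\to \mathcal{P}_{\beta_k}(^kX)^{**}$ of $\Theta_{\beta_k}^{*}$ with $\|L\|\le\lambda^k$. The projection $L\circ\Theta_{\beta_k}^{*}$ will then realize $\mathcal{P}_{\beta_k}(^kX^{**})$ as a $\lambda^k$-complemented subspace of $\mathcal{P}_{\beta_k}(^kX)^{**}$, and dually $\Theta_{\beta_k}$ will exhibit $\widehat{\bigotimes}^{k,s}_{\beta_k}X^{**}$ as locally $\lambda^k$-complemented in $\left(\widehat{\bigotimes}^{k,s}_{\beta_k}X\right)^{**}$.

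First I would feed in the net $(t_\gamma)_\gamma$ of finite rank operators $t_\gamma\colon X\to X^{**}$ supplied by Lemma~\ref{bapduals}, which satisfies $\|t_\gamma\|\le\lambda$ and $T_\gamma:=\otimes^{k,s}t_\gamma^{**}\to \mathrm{id}$ in norm on $\widehat{\bigotimes}^{k,s}_{\beta_k}X^{**}$. For each $Q\in\mathcal{P}_{\beta_k}(^kX^{**})$ set $P_\gamma:=(\otimes^{k,s}t_\gamma)^{*}(Q)\in\mathcal{P}_{\beta_k}(^kX)$, which is the polynomial $x\mapsto Q(t_\gamma(x))$. The metric mapping property gives $\|P_\gamma\|_{\mathcal{P}_{\beta_k}(^kX)}\le\lambda^k\|Q\|$. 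Picking a free ultrafilter $\mathcal U$ on the directed index set and using weak$^*$ compactness of $\lambda^k\|Q\|\cdot B_{\mathcal{P}_{\beta_k}(^kX)^{**}}$, I would then define
\[
L(Q):=w^{*}\text{-}\lim_{\mathcal U}\kappa_{\mathcal{P}_{\beta_k}(^kX)}(P_\gamma),
\]
which is linear in $Q$ and satisfies $\|L\|\le\lambda^k$.

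The crux is to check that $\Theta_{\beta_k}^{*}\circ L=\mathrm{id}$. For $w\in\widehat{\bigotimes}^{k,s}_{\beta_k}X^{**}$ one has
\[
\langle\Theta_{\beta_k}^{*}(L(Q)),w\rangle=\langle L(Q),\Theta_{\beta_k}(w)\rangle=\lim_{\mathcal U}\langle\overline{P_\gamma},w\rangle,
\]
so the step I expect to be the main obstacle is identifying this Aron--Berner extension. The key observation is that $t_\gamma$ has finite rank, so its range $F:=t_\gamma(X)$ is finite dimensional and reflexive; consequently $t_\gamma^{**}$ maps $X^{**}$ into $F\subset X^{**}$ and is weak$^{*}$-weak$^{*}$ continuous. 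Iterating this weak$^{*}$ continuity inside the defining formula of the Aron--Berner extension of $\check{P}_\gamma=\check{Q}\circ(t_\gamma\times\cdots\times t_\gamma)$ yields $\overline{P_\gamma}=Q\circ t_\gamma^{**}$, equivalently $\overline{P_\gamma}=T_\gamma^{*}(Q)$ at the tensor level. Hence $\langle\overline{P_\gamma},w\rangle=\langle Q,T_\gamma(w)\rangle$, and Lemma~\ref{bapduals} delivers $T_\gamma(w)\to w$ in norm, so the ultrafilter limit collapses to $\langle Q,w\rangle$. This gives $\Theta_{\beta_k}^{*}(L(Q))=Q$ and closes the argument; as a by-product the inequality $\|v\|\le\lambda^k\|\Theta_{\beta_k}(v)\|$ falls out by pairing against elements of $B_{\mathcal{P}_{\beta_k}(^kX^{**})}$, confirming that $\Theta_{\beta_k}$ is the claimed embedding.
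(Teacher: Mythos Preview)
Your proof is correct and follows essentially the same route as the paper's. Both arguments use the net $(t_\gamma)$ from Lemma~\ref{bapduals} and hinge on the identity $\overline{Q\circ t_\gamma}=Q\circ t_\gamma^{**}$ (which the paper records as the factorization $T_\gamma=\widetilde T_\gamma\circ\Theta_{\beta_k}$ with $\widetilde T_\gamma=(\otimes^{k,s}t_\gamma)^{**}$); the only difference is cosmetic: the paper then invokes \cite[Lemma~4]{CG} to conclude local complementation, whereas you unpack that lemma by taking an ultrafilter limit of the maps $\widetilde T_\gamma^{\,*}=\kappa\circ(\otimes^{k,s}t_\gamma)^{*}$ to build the left inverse $L$ explicitly. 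One terminological nitpick: make sure the ultrafilter $\mathcal U$ refines the section (tail) filter of the directed index set, so that the net convergence $T_\gamma(w)\to w$ forces the $\mathcal U$-limit to be $\langle Q,w\rangle$.
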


\begin{proof}
We have already observed that  $\|\Theta_{\beta_k}(v)\|\leq \beta_k(v)$ for all $v\in
\widehat{\bigotimes}^{k,s}_{\beta_k} X^{**}$. Consider, as in Lemma \ref{bapduals}, a net of finite rank operators $t_\gamma: X\to X^{**}$. Thus, the net
$T_\gamma =
\otimes^{k,s}t_\gamma^{**}$ transfers the
BAP to $\widehat{\bigotimes}^{k,s}_{\beta_k}X^{**}$, with constant
$\lambda^k$.
In view of \cite[Lemma 4]{CG}, the proof will be completed if we show that for each
$\gamma$ there is an operator $\widetilde{T}_{\gamma}$ making commutative the diagram

\begin{equation}\label{diagram}
\xymatrix{
{\widehat{\bigotimes}_{\beta_k}^{k,s}X^{**}} \ar[rr]^{\Theta_{\beta_k}} \ar[dr]_{T_{\gamma }} & & {\left(\widehat{\bigotimes}_{\beta_k}^{k,s}X\right)^{**}} \ar[ld]^{\widetilde{T}_{\gamma}} \\
& { \widehat{\bigotimes}_{\beta_k}^{k,s}X^{**}}  &  }
\end{equation}

Let us consider the finite rank operators
$$
\widetilde{T}_\gamma = (\otimes^{k,s}t_\gamma)^{**}:
\left(\widehat{\bigotimes\nolimits}^{k,s}_{\beta_k}X\right)^{**}\longrightarrow
\widehat{\bigotimes\nolimits}^{k,s}_{\beta_k}X^{**}.
$$ Note that the ranges of the operators $\widetilde{T}_\gamma$ are in
$\widehat{\bigotimes\nolimits}^{k,s}_{\beta_k}X^{**}$ instead of
$\left(\widehat{\bigotimes\nolimits}^{k,s}_{\beta_k}X^{**}\right)^{**}$
because the mappings $\otimes^{k,s}t_\gamma$ have
finite rank. It is clear that $\| \widetilde{T}_\gamma \| \leq \lambda^k $.\

We have the following factorization (see (\ref{diagram})):
$$
T_\gamma =\widetilde{T}_\gamma \circ \Theta_{\beta_k}.
$$
Indeed, by linearity it is enough to prove that $T_\gamma
(\otimes^k z)=\widetilde{T}_\gamma \big( \Theta_{\beta_k}(\otimes^k z)\big)$, for any $z\in X^{**}$. To see this, let $Q\in
\mathcal{P}_{\beta_k}(^kX^{**})$, then
\begin{eqnarray*}
\langle Q,\widetilde{T}_\gamma \big( \Theta_{\beta_k}(\otimes^k z)\big)\rangle & = &
\langle Q,(\otimes^{k,s}t_\gamma)^{**} \big(
\Theta_{\beta_k}(\otimes^k z)\big)\rangle = \Theta_{\beta_k}(\otimes^k z) \left(
(\otimes^{k,s}t_\gamma)^{*}(Q)\right) \\
&=& \Theta_{\beta_k}(\otimes^k z) \left( Q\circ \otimes^{k,s}t_\gamma\right)
=\overline{Q\circ \otimes^{k,s}t_\gamma}(z)\\
&=& Q(t_\gamma ^{**} z) = \langle Q,T_\gamma (\otimes^k z)\rangle.
\end{eqnarray*}

\end{proof}

Note that the fact that  $\Theta_{\beta_k}$ embeds $\widehat{\bigotimes}^{k,s}_{\beta_k}X^{**}$ as a locally complemented subspace of
$\Big(\widehat{\bigotimes}^{k,s}_{\beta_k}X\Big)^{**}$, implies that $\Theta_{\beta_k}$ is an isomorphic embedding. Specifically we obtain that, if $X^{**}$ has the $\lambda$-AP, then, for all $v\in \widehat{\bigotimes}^{k,s}_{\beta_k}X^{**}$,
$$
\lambda^{-k} \beta_k(v)\leq \|\Theta_{\beta_k}(v)\|\leq \beta_k(v).
$$

Consequently, when $X^{**}$ has the MAP we derive the following corollary.

\begin{corollary} \label{MAP}
Let $\beta_k$ be an s-tensor norm of order $k$.
If $X^{**}$ has the MAP, then $\Theta_{\beta_k}$ is an isometry with its image.
\end{corollary}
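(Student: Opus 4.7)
The plan is essentially to specialize Theorem \ref{loc-compl} to the case $\lambda = 1$ and invoke the explicit two-sided norm estimate recorded immediately after it. Since MAP is exactly $1$-AP, the hypothesis of Theorem \ref{loc-compl} applies with $\lambda = 1$, so $\Theta_{\beta_k}$ embeds $\widehat{\bigotimes}^{k,s}_{\beta_k}X^{**}$ as a locally $1^k = 1$-complemented subspace of $\bigl(\widehat{\bigotimes}^{k,s}_{\beta_k}X\bigr)^{**}$.

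From the displayed inequality just after the proof of Theorem \ref{loc-compl}, namely
\[
\lambda^{-k}\,\beta_k(v)\leq \|\Theta_{\beta_k}(v)\|\leq \beta_k(v) \qquad\text{for all } v\in\widehat{\bigotimes}\nolimits^{k,s}_{\beta_k}X^{**},
\]
substituting $\lambda = 1$ collapses the bounds to $\|\Theta_{\beta_k}(v)\| = \beta_k(v)$, which is the desired isometric identification on the image. So the proof reduces to a one-line invocation; there is no real obstacle.

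If one prefers to argue without referring to the post-proof remark and instead directly from the diagram \eqref{diagram} in the proof of Theorem \ref{loc-compl}, the same conclusion follows: since $\|T_\gamma\| \leq \lambda^k = 1$ and $T_\gamma = \widetilde{T}_\gamma \circ \Theta_{\beta_k}$ with $\|\widetilde{T}_\gamma\| \leq \lambda^k = 1$, for any $v$ one has $\beta_k(T_\gamma v) \leq \|\widetilde{T}_\gamma\|\,\|\Theta_{\beta_k}(v)\| \leq \|\Theta_{\beta_k}(v)\|$, and letting $\gamma$ run along the approximating net gives $\beta_k(v) \leq \|\Theta_{\beta_k}(v)\|$; combined with the trivial upper bound $\|\Theta_{\beta_k}(v)\| \leq \beta_k(v)$ (coming from the isometric Aron--Berner extension on maximal ideals), this yields the isometry.
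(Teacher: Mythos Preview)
Your proposal is correct and is exactly the paper's argument: the corollary is stated immediately after the two-sided estimate $\lambda^{-k}\beta_k(v)\le\|\Theta_{\beta_k}(v)\|\le\beta_k(v)$ with the one-word justification ``Consequently'', so specializing $\lambda=1$ is all that is intended. Your optional direct argument via the factorization $T_\gamma=\widetilde T_\gamma\circ\Theta_{\beta_k}$ is simply an unpacking of how that displayed estimate was obtained in the first place.
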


\medskip
For the  case of an injective s-tensor norm of order $k$, $\beta_k$, the thesis of Corollary \ref{MAP} holds  without the hypothesis of $X^{**}$ having the approximation property.

\begin{proposition}\label{isometria}
Let  $\beta_k$ be an injective s-tensor norm of order $k$. Then, $$\Theta_{\beta_k}:\widehat{\bigotimes}_{\beta_k}^{k,s}X^{**}\rightarrow  \left(\widehat{\bigotimes}_{\beta_k}^{k,s}X\right)^{**}$$ is an isometry with its image.
\end{proposition}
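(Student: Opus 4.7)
The plan is to reduce the claim to Corollary \ref{MAP} by embedding $X$ into a superspace whose bidual has the MAP, and then exploiting the injectivity of $\beta_k$ together with the naturality of the canonical map $\Theta_{\beta_k}$. Set $Z := \ell_\infty(B_{X^*})$ and let $I_X : X \hookrightarrow Z$ denote the canonical isometric embedding $x \mapsto (\phi(x))_{\phi \in B_{X^*}}$. Since $Z$ is a $C(K)$-space, its bidual $Z^{**}$ is again a commutative C${}^*$-algebra, hence of the form $C(K')$, and in particular $Z^{**}$ enjoys the MAP.

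Using the injectivity of $\beta_k$, the tensor operators $\otimes^{k,s}I_X \colon \widehat{\bigotimes}^{k,s}_{\beta_k} X \hookrightarrow \widehat{\bigotimes}^{k,s}_{\beta_k} Z$ and $\otimes^{k,s}I_X^{**} \colon \widehat{\bigotimes}^{k,s}_{\beta_k} X^{**} \hookrightarrow \widehat{\bigotimes}^{k,s}_{\beta_k} Z^{**}$ are both isometric embeddings. Since biadjoints of isometric embeddings remain isometric embeddings, $(\otimes^{k,s}I_X)^{**}$ isometrically embeds $\bigl(\widehat{\bigotimes}^{k,s}_{\beta_k} X\bigr)^{**}$ into $\bigl(\widehat{\bigotimes}^{k,s}_{\beta_k} Z\bigr)^{**}$.

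The central step is to verify the naturality square
$$
(\otimes^{k,s} I_X)^{**} \circ \Theta^X_{\beta_k} \;=\; \Theta^Z_{\beta_k} \circ \otimes^{k,s} I_X^{**}.
$$
On an elementary tensor $\otimes^k z$ with $z \in X^{**}$ and a polynomial $P \in \mathcal{P}_{\beta_k}({}^k Z)$, the left-hand side pairs to $\overline{P \circ I_X}(z)$ while the right-hand side pairs to $\overline{P}(I_X^{**} z)$; these agree by the standard identity $\overline{P \circ T} = \overline{P} \circ T^{**}$ (valid for any continuous operator $T$), which follows from the $w^*$-$w^*$ continuity of $T^{**}$ together with the iterated-limit definition of the Aron-Berner extension. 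Density and continuity extend the identity from elementary tensors to all of $\widehat{\bigotimes}^{k,s}_{\beta_k} X^{**}$.

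To finish, fix $v \in \widehat{\bigotimes}^{k,s}_{\beta_k} X^{**}$. Corollary \ref{MAP} applied to the space $Z$ gives $\bigl\|\Theta^Z_{\beta_k}(\otimes^{k,s}I_X^{**}(v))\bigr\| = \beta_k(\otimes^{k,s} I_X^{**}(v))$, and this equals $\beta_k(v)$ by injectivity of $\beta_k$. On the other hand, using the naturality square and the fact that $(\otimes^{k,s} I_X)^{**}$ is an isometric embedding, the same quantity equals $\|\Theta^X_{\beta_k}(v)\|$. Combined with the already-established bound $\|\Theta^X_{\beta_k}(v)\| \le \beta_k(v)$, this yields the isometry. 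I expect the main delicacy to be the verification of the naturality square together with the (elementary but easy to overlook) preservation of isometric embeddings under biadjunction; the appeal to Corollary \ref{MAP} and to the injectivity of $\beta_k$ is then routine.
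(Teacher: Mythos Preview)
Your proof is correct and follows essentially the same route as the paper: embed $X$ isometrically into a $C(K)$-type space whose bidual has the MAP, invoke Corollary~\ref{MAP} there, and pull back the isometry via the naturality square $(\otimes^{k,s}T)^{**}\circ\Theta_{\beta_k}=\Theta_{\beta_k}\circ\otimes^{k,s}T^{**}$ (which the paper isolates as Lemma~\ref{diagcomm}) together with the injectivity of $\beta_k$. The only cosmetic difference is that the paper uses $C(B_{X^*})$ (with the weak$^*$ topology) rather than your $\ell_\infty(B_{X^*})$, but both are $C(K)$-spaces and the argument is identical.
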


To prove it, we need first the following lemma.

\begin{lemma}\label{diagcomm}
Let $\beta_k$ be an s-tensor norm of order $k$ and let $T \in \mathcal{L}(X,Y)$. Then, the following diagram commutes
\begin{equation}
\xymatrix{ \widehat{\bigotimes}_{\beta_k}^{k,s}X^{**} \ar[rr]^{\Theta_{\beta_k}} \ar[d]^{\otimes^{k,s}T^{**}} & & {\left(\widehat{\bigotimes}_{\beta_k}^{k,s}X\right)^{**}} \ar[d]^{(\otimes^{k,s}T)^{**}} \\
 {\widehat{\bigotimes}_{\beta_k}^{k,s}Y^{**}} \ar[rr]^{\Theta_{\beta_k}} & & \left(\widehat{\bigotimes}_{\beta_k}^{k,s}Y\right)^{**}  }
\end{equation}
\end{lemma}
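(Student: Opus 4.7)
The plan is to reduce commutativity of the square to two well-known facts: density of elementary tensors in a symmetric tensor product and naturality of the Aron–Berner extension with respect to linear operators. Both paths from $\widehat{\bigotimes}_{\beta_k}^{k,s}X^{**}$ to $\bigl(\widehat{\bigotimes}_{\beta_k}^{k,s}Y\bigr)^{**}$ are bounded linear maps: $\Theta_{\beta_k}$ has norm at most $1$, and the vertical arrows are bounded by $\|T\|^k$ (resp.\ $\|T^{**}\|^k = \|T\|^k$) thanks to the metric mapping property. Since the elementary tensors $\otimes^{k} z$ with $z\in X^{**}$ span a dense subspace of $\widehat{\bigotimes}_{\beta_k}^{k,s}X^{**}$, it is enough to verify that the two compositions agree on such tensors.

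First I would evaluate both paths on $\otimes^{k}z$ and pair them against an arbitrary polynomial $Q\in \mathcal{P}_{\beta_k}(^kY)=\bigl(\widehat{\bigotimes}_{\beta_k}^{k,s}Y\bigr)^{*}$. For the upper path, using that the adjoint $(\otimes^{k,s}T)^{*}\colon \mathcal{P}_{\beta_k}(^kY)\to \mathcal{P}_{\beta_k}(^kX)$ is precisely the composition map $Q\mapsto Q\circ T$ (this follows immediately by testing on elementary tensors $\otimes^{k}x$), one obtains
\begin{equation*}
\bigl\langle Q,\,(\otimes^{k,s}T)^{**}\bigl(\Theta_{\beta_k}(\otimes^{k}z)\bigr)\bigr\rangle
=\bigl\langle Q\circ T,\,\Theta_{\beta_k}(\otimes^{k}z)\bigr\rangle
=\overline{Q\circ T}(z).
\end{equation*}
For the lower path, by definition of $\Theta_{\beta_k}$ on the elementary tensor $\otimes^{k}(T^{**}z) = (\otimes^{k,s}T^{**})(\otimes^{k}z)$ one gets
\begin{equation*}
\bigl\langle Q,\,\Theta_{\beta_k}\bigl((\otimes^{k,s}T^{**})(\otimes^{k}z)\bigr)\bigr\rangle
=\overline{Q}(T^{**}z).
\end{equation*}

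Thus the commutativity of the square reduces to the identity $\overline{Q\circ T}(z) = \overline{Q}(T^{**}z)$ for every $z\in X^{**}$ and every $Q\in \mathcal{P}_{\beta_k}(^kY)$. This is the standard naturality of the Aron–Berner extension: taking any net $(x_{i_j})\subset X$ with $x_{i_j}\xrightarrow{w^*}z$, the $w^*$–$w^*$-continuity of $T^{**}$ gives $Tx_{i_j} = T^{**}x_{i_j}\xrightarrow{w^*}T^{**}z$, and iterating this $k$ times in the defining formula of $\overline{(\cdot)}$ yields the equality (see \cite{LibroDi,Za}).

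The only potentially tricky point is making sure the identifications are correct, namely that the adjoint of $\otimes^{k,s}T$ really is the map $Q\mapsto Q\circ T$ under the duality $\bigl(\widehat{\bigotimes}_{\beta_k}^{k,s}X\bigr)^{*}=\mathcal{P}_{\beta_k}(^kX)$; once this is checked on elementary tensors and extended by linearity and density, the rest is routine. There is no serious obstacle beyond this careful bookkeeping.
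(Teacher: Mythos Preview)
Your argument is correct and follows essentially the same route as the paper: reduce to elementary tensors by density and boundedness, pair against an arbitrary $Q\in\mathcal{P}_{\beta_k}(^kY)$, identify $(\otimes^{k,s}T)^{*}Q$ with $Q\circ T$, and then invoke the naturality $\overline{Q\circ T}=\overline{Q}\circ T^{**}$ of the Aron--Berner extension. The only difference is cosmetic---the paper performs the last step in one line without isolating it as a separate identity---so there is nothing to add.
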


\begin{proof}
It is  enough to prove that the diagram commutes when applied to elementary tensors. So, we need to see that
$\Theta_{\beta_k} \big( \otimes^{k,s}T^{**}  (\otimes^k z) \big) =   (\otimes^{k,s}T)^{**} \big( \Theta_{\beta_k} (\otimes^k z) \big)$, for all  $z \in X^{**}$.
Let $P \in \mathcal{P}_{\beta_k}(^kY) = \left(\widehat{\bigotimes}_{\beta_k}^{k,s}Y\right)^{*}$, then
$$\Theta_{\beta_k} \big( \otimes^{k,s}T^{**}  (\otimes^k z) \big) (P) = \Theta_{\beta_k} \big( \otimes^k T^{**}z \big) (P) = \langle \overline{P},\otimes^k T^{**}z \rangle = \overline{P}(T^{**}z).$$
On the other hand,
\begin{eqnarray*} (\otimes^{k,s}T)^{**} \big( \Theta_{\beta_k} (\otimes^k z) \big) (P) &=& \langle \Theta_{\beta_k} (\otimes^k z), (\otimes^k T)^{*}(P) \rangle =
\langle \overline{(\otimes^k T)^{*}(P)}, \otimes^k z  \rangle \\ &=& \langle \overline{P \circ \otimes^k T}, \otimes^k z  \rangle = \overline{P} (T^{**}z).
\end{eqnarray*}

\end{proof}

\begin{proof}(of Proposition~\ref{isometria})
Consider the canonical isometric inclusions:
$$ i_X : X \to C(B_{X^{*}}), \;\;\;\; i_X^{**} : X^{**} \to C(B_{X^{*}})^{**}.$$

Now, by Lemma~\ref{diagcomm}, the diagram commutes:
\begin{equation*}
\xymatrix{ \widehat{\bigotimes}_{\beta_k}^{k,s}X^{**} \ar[rr]^{\Theta_{\beta_k}} \ar[d]^{\otimes^{k,s}i_X^{**}} & & {\left(\widehat{\bigotimes}_{\beta_k}^{k,s}X\right)^{**}} \ar[d]^{(\otimes^{k,s}i_X)^{**}} \\
 {\widehat{\bigotimes}_{\beta_k}^{k,s}C(B_{X^{*}})^{**}} \ar[rr]^{\Theta_{\beta_k}} & & \left(\widehat{\bigotimes}_{\beta_k}^{k,s}C(B_{X^{*}})\right)^{**}  }.
\end{equation*}
By Lemma 4.4, Corollary 1 of 23.2 and Corollary 1 of 21.6 in \cite{DF} we know that $C(B_{X^{*}})^{**}$ has the MAP. Thus, by Corollary \ref{MAP}, $\Theta_{\beta_k}:\widehat{\bigotimes}_{\beta_k}^{k,s}C(B_{X^{*}})^{**}\to \left(\widehat{\bigotimes}_{\beta_k}^{k,s}C(B_{X^{*}})\right)^{**} $ is an isometry. Since $\beta_k$ is an injective s-tensor norm, the mappings $\otimes^{k,s}i_X^{**}$ and $(\otimes^{k,s}i_X)^{**}$ are also isometries. Finally, the commutativity of the diagram yields the isometry of the desired mapping $\Theta_{\beta_k}:\widehat{\bigotimes}_{\beta_k}^{k,s}X^{**}\to \left(\widehat{\bigotimes}_{\beta_k}^{k,s}X\right)^{**}$.
\end{proof}

\begin{question}
 If $\Theta_{\beta_k}$ is an isomorphic embedding, does it imply  that  $\Theta_{\beta_k}$ embeds $\widehat{\bigotimes}^{k,s}_{\beta_k}X^{**}$ as a locally complemented subspace of $(\widehat{\bigotimes}^{k,s}_{\beta_k}X)^{**}$?
\end{question}

\begin{remark}\rm
Another result for 2-fold tensor products of spaces without AP was proved in \cite[Cor. 4]{CG} by  Cabello and Garc\'ia. They showed that $\Theta_{\pi}:X^{**}\widehat \otimes_{\pi} X^{**}\to (X\widehat\otimes_{\pi} X)^{**}$ is an isomorphic embedding  if $X$ has type $2$ and $X^*$ has cotype $2$. Examples of this result are $\mathcal K(\mathcal H)$, $\mathcal L(\mathcal H)$
and a Pisier space \cite{Pis} having no uniformly
complemented finite dimensional subspaces. Note that Pisier space
and $\mathcal L(\mathcal H)= \mathcal K(\mathcal H)^{**}$  both fail
the AP.

A canonical commutative diagram allows us to translate this statement to the symmetric case. Thus, we have: if $X$ has type $2$ and $X^*$ has cotype $2$, then $\Theta_{\pi_{2,s}}:\widehat{\bigotimes}_{\pi_{2,s}}^{2,s} X^{**}\rightarrow  \left(\widehat
{\bigotimes}_{\pi_{2,s}}^{2,s}X\right)^{**}$ is an isomorphic embedding.
\end{remark}

\begin{remark} \rm \textbf{The holomorphic case}\label{hbbeta}

The previous results have some consequences in the holomorphic setting. To state them, we need to consider `the same' s-tensor norm $\beta_k$ of order $k$, for each $k$. It is clear what we mean by `the same' when $\beta_k=\pi_{k,s}$ or $\beta_k=\varepsilon_{k,s}$, for all $k$. For the general case we refer to the concept of `coherent sequence of polynomial ideals' defined in \cite{CaDiMu} or \cite{CaDiMu2}.

Hence, we say that $\beta=(\beta_k)_k$ is an \emph{s-tensor norm} if, for each $k$, $\beta_k$ is an s-tensor norm of order $k$ and the sequence $\{\mathcal P_{\beta_k}\}_k$ is coherent.

Following \cite{CaDiMu2}, to a given s-tensor norm $\beta$, we can associate a Fr\'{e}chet space of holomorphic functions of bounded type:
$$
H_{b\beta}(X)=\left\{f\in H(X)\ : \frac{d^kf(0)}{k!} \in \mathcal P_{\beta_k}(^kX)\textrm{
for all } k\textrm{ and }
\limsup_{k\to\infty}\Big\|\frac{d^kf(0)}{k!}\Big\|_{\mathcal P_{\beta_k}(^kX)}^{\frac1k}
= 0 \right\}.
$$

Analogously, we can consider holomorphic functions of bounded type associated to $\beta$ defined on an open ball $B_r(x)$ of center $x$ and radius $r$:
$$
H_{b\beta}(B_r(x))=\left\{f\in H(B_r(x))\ : \frac{d^kf(x)}{k!} \in \mathcal P_{\beta_k}(^kX)\textrm{
for all } k\textrm{ and }
\limsup_{k\to\infty}\Big\|\frac{d^kf(x)}{k!}\Big\|_{\mathcal P_{\beta_k}(^kX)}^{\frac1k}
\leq\frac1r \right\}.
$$

Galindo, Maestre and Rueda in \cite{GMR} introduced and developed the concept of `$R$-Schauder decomposition'. For $0<R\leq\infty$, a sequence of Banach spaces $(E_k,\|\cdot\|_k)$ is an \emph{$R$-Schauder decomposition} of a Fr\'{e}chet space $E$ if it is a Schauder decomposition and verifies the condition: for every sequence $(x_k)_k$, with $x_k\in E_k$, the series $\sum_{k=1}^\infty x_k$ converges in $E$ if and only if $$\limsup_k\|x_k\|^{\frac1k}_k\leq\frac1R.$$

For an s-tensor norm  $\beta$ we know from \cite[Prop. 3.2.11 and Prop 3.2.53]{Mu} the following:
\begin{itemize}
\item $\{\mathcal P_{\beta_k}(^kX)\}_k$ is an $\infty$-Schauder decomposition of $H_{b\beta}(X)$.
\item $\{\mathcal P_{\beta_k}(^kX)\}_k$ is an $r$-Schauder decomposition of $H_{b\beta}(B_r(x))$.
\end{itemize}

Now, from Theorem \ref{loc-compl} and Corollary \ref{MAP}, invoking some results of \cite{GMR} and arguing as in \cite[Th. 4]{CG} we obtain, for any s-tensor norm $\beta$, the following:
\begin{itemize}
\item If $X^{**}$ has the BAP, then $H_{b\beta}(X^{**})$ is a complemented subspace of $H_{b\beta}(X)^{**}$.
\item If $X^{**}$ has the MAP, then $H_{b\beta}(B^{**}_r(x))$ is a complemented subspace of $H_{b\beta}(B_r(x))^{**}$ (where $B^{**}_r(x)$ means the ball of $X^{**}$ with center $x\in X$ and radius $r$).
\end{itemize}

\end{remark}
\section{Unique norm preserving extension for a polynomial belonging to an ideal}

Godefroy  gave in \cite{G} a characterization of norm-one functionals having unique norm
preserving extensions to the bidual as the points of $S_{X^*}$ where the identity is $w ^*$- $w$ continuous (see also
\cite[Lemma III.2.14]{HWW}).

\begin{lemma}
 Let $X$ be a Banach space and $x^{\ast }
\in S_{X^{\ast }}$.  The following are equivalent:
\begin{itemize}
\item[$(i)$]  $x^{\ast }$ has a unique norm preserving extension to a  functional on $X^{\ast \ast }$.
\item[$(ii)$] The function $Id_{B_{X^{\ast }}}: (B_{X^{\ast }} ,w^{\ast }) \longrightarrow (B_{X^{\ast }} ,w) $
is continuous at $x^{\ast }$.
\end{itemize}
\end{lemma}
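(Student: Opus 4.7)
The plan is to use the Goldstine-theorem plus Banach-Alaoglu to convert the uniqueness question for extensions into a statement about weak/weak$^*$ convergence of nets in $B_{X^*}$. The setup: identify $X^*$ inside $X^{***}$ via $\kappa_{X^*}$, and note that norm-preserving extensions of $x^*$ to $X^{**}$ correspond bijectively to elements $\Phi\in S_{X^{***}}$ with $\kappa_X^*(\Phi)=x^*$, where $\kappa_X:X\to X^{**}$ is the canonical inclusion. Equivalently, writing $X^\perp=\ker\kappa_X^*\subset X^{***}$, uniqueness says that $\kappa_{X^*}(x^*)$ is the only element of $B_{X^{***}}$ of the form $\kappa_{X^*}(x^*)+\phi$ with $\phi\in X^\perp$.

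For $(ii)\Rightarrow(i)$, I would pick any norm-preserving extension $\Phi\in S_{X^{***}}$. By Goldstine's theorem, $B_{X^*}$ is weak$^*$-dense in $B_{X^{***}}$, so there is a net $(x^*_\alpha)\subset B_{X^*}$ with $x^*_\alpha\xrightarrow{w^*}\Phi$ in $X^{***}$. Testing against elements $\kappa_X(x)$ with $x\in X$ gives $x^*_\alpha(x)\to x^*(x)$, so $x^*_\alpha\xrightarrow{w^*}x^*$ in $X^*$. Hypothesis $(ii)$ then upgrades this to weak convergence of $x^*_\alpha$ to $x^*$ in $X^*$, which is exactly pointwise convergence on $X^{**}$. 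Combined with the weak$^*$-convergence to $\Phi$ in $X^{***}$, this forces $\Phi(x^{**})=x^{**}(x^*)=\kappa_{X^*}(x^*)(x^{**})$ for every $x^{**}\in X^{**}$, i.e.\ $\Phi=\kappa_{X^*}(x^*)$.

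For the converse $(i)\Rightarrow(ii)$, assume $(ii)$ fails and produce a second norm-preserving extension. If the identity map is not continuous at $x^*$, there is a net $(x^*_\alpha)\subset B_{X^*}$ with $x^*_\alpha\xrightarrow{w^*}x^*$ but some $x^{**}\in B_{X^{**}}$ and $\varepsilon>0$ witnessing $|x^{**}(x^*_\alpha)-x^{**}(x^*)|\geq\varepsilon$ along a subnet. By weak$^*$-compactness of $B_{X^{***}}$ (Banach--Alaoglu), I can pass to a further subnet so that $x^*_\alpha\xrightarrow{w^*}\Phi$ in $X^{***}$ for some $\Phi\in B_{X^{***}}$. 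The restriction argument as above gives $\kappa_X^*(\Phi)=x^*$, hence $\|\Phi\|\geq\|x^*\|=1$ and $\Phi\in S_{X^{***}}$ is a norm-preserving extension. But the obstruction term forces $\Phi(x^{**})\neq \kappa_{X^*}(x^*)(x^{**})$, so $\Phi\neq\kappa_{X^*}(x^*)$, contradicting $(i)$.

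The only real subtlety is the bookkeeping between the weak$^*$-topology of $X^*$ (dual pairing with $X$) and the weak$^*$-topology of $X^{***}$ (dual pairing with $X^{**}$): on the image $\kappa_{X^*}(X^*)$, the latter restricts to the \emph{weak} topology of $X^*$, and that is precisely why condition $(ii)$ is the right thing. No harder tool than Goldstine plus Alaoglu is needed, and both directions are short. I expect this to be the cleanest presentation, and I would not anticipate a technical obstacle beyond keeping the three levels of duality straight.
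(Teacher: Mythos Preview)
Your argument is correct and is essentially the standard proof of this classical result. Note, however, that the paper does not supply its own proof of this lemma: it is quoted as a known characterization due to Godefroy \cite{G} (see also \cite[Lemma III.2.14]{HWW}), and is used as a motivating template for the polynomial version in Theorem~\ref{godefroy}. So there is nothing in the paper to compare your proof against; your Goldstine/Alaoglu argument is precisely the expected one, and the bookkeeping you flag (that the weak$^*$ topology of $X^{***}$ restricted to $\kappa_{X^*}(X^*)$ is the weak topology of $X^*$) is indeed the only point requiring care.
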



Aron, Boyd and Choi  presented in \cite{ABCh} a polynomial version of this result:

\begin{proposition} \label{equi-ABCh}
Let $X$ be a Banach space such that $X^{\ast\ast }$ has the MAP and let $P\in S_{\mathcal{P}(^{k}X)}$. The
following are equivalent:
\begin{itemize}
\item[$(i)$]  $P$ has a unique norm preserving extension to  $\mathcal{P}(^{k}X^{\ast\ast })$.
\item[$(ii)$] If $\{P_{\alpha}\}_\alpha\subset  B_{\mathcal{P}(^{k}X)}$ converges pointwise to $P$, then $\{\overline{P_{\alpha}}\}_\alpha$
converges pointwise to
 $\overline{P}$ in $X^{\ast\ast }$.
\end{itemize}
\end{proposition}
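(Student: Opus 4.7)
The plan is to prove the two implications separately, modelled on Godefroy's $w^*$--$w$ argument for functionals, with symmetric tensor products playing the role of the predual.

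\textbf{$(i) \Rightarrow (ii)$.} Suppose $P$ has a unique norm preserving extension, which by Davie--Gamelin must be $\overline{P}$. Let $(P_\alpha) \subset B_{\mathcal{P}(^kX)}$ converge pointwise on $X$ to $P$. Davie--Gamelin again gives $\|\overline{P_\alpha}\|_{\mathcal{P}(^kX^{**})} = \|P_\alpha\| \leq 1$, so by Alaoglu the net $(\overline{P_\alpha})$ is relatively $w^*$-compact in $\mathcal{P}(^kX^{**})$. Let $Q$ be any $w^*$-cluster point; by Lemma \ref{convergencia} this is the same as a pointwise cluster point on $X^{**}$. Restriction to $X$ forces $Q|_X = P$, and lower semicontinuity of the norm gives $\|Q\|\leq 1$, whence $\|Q\|=1$. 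Thus $Q$ is a norm preserving extension, and hypothesis (i) forces $Q = \overline{P}$. Since the $w^*$-compact set $\{\overline{P_\alpha}\}$ has the single cluster point $\overline{P}$, the net converges $w^*$ to $\overline{P}$, and Lemma \ref{convergencia} translates this to pointwise convergence on $X^{**}$.

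\textbf{$(ii) \Rightarrow (i)$.} Let $Q \in \mathcal{P}(^kX^{**})$ be any norm preserving extension of $P$. The MAP of $X^{**}$, realized as in \cite[Cor.~1]{CG} (cf.\ Lemma \ref{bapduals}), yields a net of finite rank operators $t_\gamma : X \to X^{**}$ with $\|t_\gamma\|\le 1$ such that $t_\gamma^{**}(z) \to z$ for every $z \in X^{**}$. Set $P_\gamma := Q \circ t_\gamma \in \mathcal{P}(^kX)$; then $\|P_\gamma\| \leq 1$ by the metric mapping property, and $P_\gamma(x) = Q(t_\gamma x) \to Q(x) = P(x)$ for every $x \in X$, so $(P_\gamma) \subset B_{\mathcal{P}(^kX)}$ converges pointwise on $X$ to $P$.

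The decisive step is the identity $\overline{P_\gamma}(z) = Q(t_\gamma^{**}z)$ for every $z \in X^{**}$. Because $t_\gamma$ has finite rank, its image sits in a finite-dimensional subspace $F_\gamma \subset X^{**}$, and $t_\gamma^{**}$ factors through $F_\gamma$ (in particular lands back in $X^{**}$). The associated symmetric multilinear form $(x_1,\ldots,x_k) \mapsto \overset\vee{Q}(t_\gamma x_1, \ldots, t_\gamma x_k)$ only sees $\overset\vee{Q}$ on $F_\gamma^k$, where it is norm-continuous; consequently the iterated $w^*$-limits defining the Aron--Berner extension commute with $\overset\vee{Q}$ and amount to substituting $t_\gamma^{**}z_j$ for $t_\gamma x_j$, yielding $\overline{P_\gamma}(z) = Q(t_\gamma^{**}z)$. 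By continuity of $Q$ and the MAP, $\overline{P_\gamma} \to Q$ pointwise on $X^{**}$. But hypothesis (ii) applied to the net $(P_\gamma)$ also gives $\overline{P_\gamma} \to \overline{P}$ pointwise. Uniqueness of pointwise limits forces $Q = \overline{P}$, proving (i).

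The principal obstacle is the identity $\overline{Q \circ t_\gamma} = Q \circ t_\gamma^{**}$: it depends crucially on the finite rank of $t_\gamma$ to collapse the iterated weak-star limits into evaluations inside a finite-dimensional subspace of $X^{**}$, which is what allows $Q$ to be reused on $X^{**}$ rather than forcing a passage to higher duals.
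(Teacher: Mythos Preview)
Your proof is correct. Note that the paper does not supply its own proof of this proposition; it is quoted from \cite{ABCh}, and the paper instead proves the generalisation to an arbitrary s-tensor norm in Theorem~\ref{godefroy}. Your argument for $(i)\Rightarrow(ii)$ matches that proof. For $(ii)\Rightarrow(i)$ you take a genuinely different route.

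In Theorem~\ref{godefroy} the paper exploits the MAP of $X^{**}$ only indirectly, through Corollary~\ref{MAP}, to make $\Theta_{\beta_k}$ an isometric embedding; it then Hahn--Banach extends the competing extension $Q$ to an element $\widetilde Q\in\mathcal P_{\beta_k}(^kX)^{**}$ and uses Goldstine to manufacture the net $(P_\alpha)$ abstractly. You instead use the MAP concretely: the approximating net $(t_\gamma)$ from \cite[Cor.~1]{CG} (as in Lemma~\ref{bapduals}) furnishes the polynomials $P_\gamma=Q\circ t_\gamma$ directly, and the finite rank of $t_\gamma$ yields the key identity $\overline{Q\circ t_\gamma}=Q\circ t_\gamma^{**}$, which is precisely the calculation $\overline{Q\circ\otimes^{k,s}t_\gamma}(z)=Q(t_\gamma^{**}z)$ appearing in the proof of Theorem~\ref{loc-compl}. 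Your approach is more elementary---no $\Theta_{\beta_k}$, no Hahn--Banach in the triple dual, no Goldstine---and in fact carries over verbatim to any maximal ideal $\mathcal P_{\beta_k}$, since the ideal property gives $\|Q\circ t_\gamma\|_{\mathcal P_{\beta_k}(^kX)}\le\|Q\|_{\mathcal P_{\beta_k}(^kX^{**})}\|t_\gamma\|^k\le 1$. The paper's route, on the other hand, isolates the isometry of $\Theta_{\beta_k}$ as a structural result of independent interest and makes the role of the tensor-product predual explicit.
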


We are interested on having a similar characterization for unique
norm preserving extensions to the bidual of polynomials belonging to
a maximal (scalar-valued) polynomial ideal. In this case, obviously, the norm
that we want to preserve is the ideal norm.

Let $\beta_k$ be an $s$-tensor norm of order $k$ and let $P\in
\mathcal{P}_{\beta_k}(^kX)$.
Since we have already mentioned that  the Aron-Berner
extension preserves the ideal norm for maximal polynomial ideals \cite{CarGal}, should $P$ has unique norm preserving extension, this extension ought to be
$\overline{P}$.

To prove our result we need the following equivalence
between different topologies for the convergence of nets of
polynomials in an ideal unit ball. The proof is straightforward.

\begin{lemma}\label{convergencia}
Suppose that the polynomial $P$ and the net $\{P_\alpha\}_\alpha$
are contained in the unit ball of $\mathcal{P}_{\beta_k}(^kX)$, where $\beta_k$ is an s-tensor norm of order $k$.
Then, the following are equivalent:
\begin{enumerate}
\item[$(i)$] $P_\alpha(x)\to P(x)$ for all $x\in X$.
\item[$(ii)$] $P_\alpha\to P$ for the topology
$\sigma(\mathcal{P}_{\beta_k}(^kX),\widehat{\bigotimes}_{\beta_k}^{k,s}X)$.
\item[$(iii)$] $P_\alpha\to P$ for the topology
$\sigma(\mathcal{P}(^kX),\widehat{\bigotimes}_{\pi_{k,s}}^{k,s}X)$.
\end{enumerate}
\end{lemma}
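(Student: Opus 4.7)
The plan is to translate each of the three conditions into a statement about a dual pairing and then invoke the standard principle that, for a bounded net in a dual Banach space, weak-$*$ convergence is equivalent to pointwise convergence on a norm-dense subset of the predual.

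Recall that by definition of the polynomial ideal dual to a symmetric tensor norm, we have the isometric identifications $\mathcal{P}_{\beta_k}(^kX)=\bigl(\widehat{\bigotimes}^{k,s}_{\beta_k}X\bigr)^{*}$ and $\mathcal{P}(^kX)=\bigl(\widehat{\bigotimes}^{k,s}_{\pi_{k,s}}X\bigr)^{*}$, and the pairing on elementary tensors is $\langle Q,\otimes^{k} x\rangle = Q(x)$. Thus the topologies in (ii) and (iii) are the respective weak-$*$ topologies. The implications $(ii)\Rightarrow(i)$ and $(iii)\Rightarrow(i)$ then follow by testing weak-$*$ convergence against the element $\otimes^{k}x$. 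I would also note at the outset that the inclusion $\mathcal{P}_{\beta_k}(^kX)\hookrightarrow\mathcal{P}(^kX)$ is contractive, because $\beta_k\leq\pi_{k,s}$ gives $\|Q\|_{\mathcal{P}(^kX)}\leq\|Q\|_{\mathcal{P}_{\beta_k}(^kX)}$; so the net $\{P_\alpha\}$ is bounded simultaneously in both spaces.

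For $(i)\Rightarrow(ii)$ and $(i)\Rightarrow(iii)$, I would argue as follows. By linearity, (i) gives $\langle P_\alpha,u\rangle \to \langle P,u\rangle$ for every finite linear combination $u=\sum_{j}\lambda_j\otimes^{k}x_j$. Since such combinations are, by definition, norm-dense in both $\widehat{\bigotimes}^{k,s}_{\beta_k}X$ and $\widehat{\bigotimes}^{k,s}_{\pi_{k,s}}X$, and since $\{P_\alpha\}\cup\{P\}$ is uniformly bounded in the relevant dual norm, a standard $3\varepsilon$ argument (approximate an arbitrary tensor by a finite sum, use the uniform bound to control the tails, and pass to the limit on the finite sum) yields pointwise convergence on the whole completion. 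This gives weak-$*$ convergence, i.e., (ii) and (iii).

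I do not expect a real obstacle here; the proof is bookkeeping. The only potential subtlety is making sure the uniform boundedness is used in the correct tensor norm when passing to the completion, which is why verifying $\|\cdot\|_{\mathcal{P}(^kX)}\leq\|\cdot\|_{\mathcal{P}_{\beta_k}(^kX)}$ at the beginning is essential so that the $3\varepsilon$-argument works simultaneously for (ii) and (iii).
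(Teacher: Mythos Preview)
Your proof is correct and is precisely the routine argument the authors have in mind: the paper omits the proof entirely, stating only that it is ``straightforward.'' Your use of the density of elementary tensors together with the uniform bound in the appropriate dual norm (ensured by the contractive inclusion $\mathcal{P}_{\beta_k}(^kX)\hookrightarrow\mathcal{P}(^kX)$) is exactly the expected verification.
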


\begin{theorem}\label{godefroy}
Let $\beta_k$ be an s-tensor norm of order $k$ and suppose that
 $X^{**}$ has the MAP. Consider a polynomial
$P\in\mathcal{P}_{\beta_k}(^kX)$ with
$\|P\|_{\mathcal{P}_{\beta_k}(^kX)}=1$. Then, the following are
equivalent:
\begin{enumerate}
\item[$(i)$] $P$ has a unique norm preserving extension to  $\mathcal{P}_{\beta_k}(^kX^{**})$.
\item[$(ii)$] The $\beta_k$-Aron-Berner extension $(AB)_{\beta_k}:
\left( B_{\mathcal{P}_{\beta_k}(^kX)}, w^*\right)\longrightarrow \left(
B_{\mathcal{P}_{\beta_k}(^kX^{**})}, w^*\right)$ is continuous at
$P$.
\item[$(iii)$] If the net $\{P_{\alpha}\}_\alpha\subset  B_{\mathcal{P}_{\beta_k}(^{k}X)}$ converges pointwise to $P$,
then $\{\overline{P_{\alpha}}\}_\alpha$ converges pointwise to
 $\overline{P}$ in $X^{\ast\ast }$.
\end{enumerate}
\end{theorem}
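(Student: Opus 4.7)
The plan is to prove (ii) $\Leftrightarrow$ (iii) as a formal translation, (i) $\Rightarrow$ (ii) by a standard compactness/uniqueness argument, and then put the main effort into (ii) $\Rightarrow$ (i), where the MAP of $X^{**}$ enters in a crucial way.

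The equivalence (ii) $\Leftrightarrow$ (iii) follows immediately from Lemma \ref{convergencia}: on both balls $B_{\mathcal{P}_{\beta_k}(^kX)}$ and $B_{\mathcal{P}_{\beta_k}(^kX^{**})}$ the weak-$*$ topology agrees with the topology of pointwise convergence, and since the Aron-Berner morphism is isometric on the maximal ideal $\mathcal{P}_{\beta_k}$, it actually maps the first ball into the second, so $w^*$-continuity of $(AB)_{\beta_k}$ at $P$ is literally the statement in (iii). For (i) $\Rightarrow$ (ii) I would argue by $w^*$-compactness of $B_{\mathcal{P}_{\beta_k}(^kX^{**})}$: if $\{P_\alpha\} \subset B_{\mathcal{P}_{\beta_k}(^kX)}$ converges pointwise to $P$, then $\{\overline{P_\alpha}\}$ lies in the compact bidual ball and any $w^*$-cluster point $Q$ satisfies $Q|_X = P$ (since $\overline{P_\alpha}(x) = P_\alpha(x)$) and $\|Q\|_{\mathcal{P}_{\beta_k}} \leq 1$, so $Q$ is a norm preserving extension; by (i) we get $Q = \overline{P}$, and as every cluster point equals $\overline{P}$ the net itself converges to $\overline{P}$ in $w^*$.

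For (ii) $\Rightarrow$ (i), let $Q \in \mathcal{P}_{\beta_k}(^kX^{**})$ be any norm preserving extension of $P$; the goal is to force $Q=\overline{P}$. I would invoke the refinement of MAP from \cite[Cor.~1]{CG} already used in Lemma \ref{bapduals} to obtain a net of finite-rank operators $t_\gamma : X \to X^{**}$ with $\|t_\gamma\|\leq 1$ whose biadjoints, viewed as operators $X^{**}\to X^{**}$, converge in norm pointwise to $\mathrm{id}_{X^{**}}$. Setting $P_\gamma := Q \circ t_\gamma$, the ideal property gives $\|P_\gamma\|_{\mathcal{P}_{\beta_k}(^kX)} \leq \|Q\|\,\|t_\gamma\|^k \leq 1$, and for $x\in X$ the continuity of $Q$ combined with $t_\gamma x \to x$ in $X^{**}$ yields $P_\gamma(x) \to Q(x) = P(x)$. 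Thus $P_\gamma \to P$ pointwise in $B_{\mathcal{P}_{\beta_k}(^kX)}$, and the assumption (ii), in the equivalent form (iii), forces $\overline{P_\gamma}(z) \to \overline{P}(z)$ for every $z \in X^{**}$.

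To close the argument I would identify $\overline{P_\gamma}$ explicitly. Writing the finite-rank operator as $t_\gamma(x) = \sum_i \phi_i(x)\, z_i$ with $\phi_i \in X^*$ and $z_i \in X^{**}$, $P_\gamma$ becomes a finite linear combination of products of elements of $X^*$ whose coefficients come from the associated symmetric $k$-linear form of $Q$, and the Aron-Berner extension acts by replacing each $\phi_i$ by $\phi_i^{**}$. Recollecting the expansion yields $\overline{P_\gamma}(z) = Q\bigl(\sum_i \phi_i^{**}(z)\, z_i\bigr) = Q(t_\gamma^{**} z)$, which by continuity of $Q$ and the norm convergence $t_\gamma^{**} z \to z$ tends to $Q(z)$. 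Comparing with the previous limit gives $Q(z) = \overline{P}(z)$ for every $z\in X^{**}$, so $Q = \overline{P}$ and (i) follows. I expect the technical point requiring most care to be precisely this identification $\overline{Q \circ t_\gamma} = Q \circ t_\gamma^{**}$ for finite-rank $t_\gamma$ taking values in $X^{**}$, together with the careful extraction from the MAP of approximating operators with domain $X$, target $X^{**}$ and norm at most one.
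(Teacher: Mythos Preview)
Your proof is correct. The implications (ii) $\Leftrightarrow$ (iii) and (i) $\Rightarrow$ (ii) match the paper's argument essentially verbatim. For (ii) $\Rightarrow$ (i), however, you take a genuinely different route. The paper proceeds abstractly: it first invokes Corollary \ref{MAP} (the isometry of $\Theta_{\beta_k}$, itself built on the same \cite[Cor.~1]{CG} you cite), Hahn-Banach extends $Q$ through $\Theta_{\beta_k}$ to an element $\widetilde Q\in\mathcal P_{\beta_k}(^kX)^{**}$, and then uses Goldstine to produce a net $\{P_\alpha\}$ in the unit ball $w^*$-converging to $\widetilde Q$; the two limits of $\langle\overline{P_\alpha},v\rangle$ on $v\in\widehat{\bigotimes}^{k,s}_{\beta_k}X^{**}$ then force $Q=\overline P$. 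Your argument bypasses $\Theta_{\beta_k}$, Hahn-Banach and Goldstine entirely: you build the approximating net \emph{explicitly} as $P_\gamma=Q\circ t_\gamma$ and close the loop via the concrete identity $\overline{Q\circ t_\gamma}=Q\circ t_\gamma^{**}$, which is indeed correct for finite-rank $t_\gamma$. What your approach buys is a shorter, more self-contained proof that does not rely on the structural embedding of Section~3; what the paper's approach buys is that the heavy lifting (Corollary \ref{MAP}) is isolated in a result of independent interest used elsewhere, so the proof of the theorem itself becomes a formal duality argument.
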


\begin{proof}
$(i)\Rightarrow (ii)$. Let $\{P_{\alpha}\}_\alpha\subset
B_{\mathcal{P}_{\beta_k}(^{k}X)}$ such that
$P_\alpha\overset{w^*}{\rightarrow} P$. We want to see that
$\overline{P}_\alpha\overset{w^*}{\rightarrow} \overline{P}$ in
$\mathcal{P}_{\beta_k}(^kX^{**})$. By the compactness of $\left(
B_{\mathcal{P}_{\beta_k}(^kX^{**})}, w^*\right)$, the net
$\{\overline{P_{\alpha}}\}_\alpha$ has a subnet
$\{\overline{P_{\gamma}}\}_\gamma$ $w^*$-convergent to a polynomial
$Q\in B_{\mathcal{P}_{\beta_k}(^kX^{**})}$.

For each $x\in X$, we have, on one hand, that
$\overline{P}_\gamma(x)=P_\gamma(x)\to P(x)$ and, on the other hand,
that $\overline{P}_\gamma(x)\to Q(x)$. So, $Q|_X=P$. Also,
$\|Q\|\leq 1=\|P\|$ implies
$\|Q\|_{\mathcal{P}_{\beta_k}(^kX^{**})}=\|P\|_{\mathcal{P}_{\beta_k}(^kX)}$.
This means that $Q$ is a norm preserving extension of $P$ and by
$(i)$ it should be $Q=\overline{P}$. Since for every subnet of
$\{P_{\alpha}\}_\alpha$ we can find a sub-subnet such that the
Aron-Berner extensions are $w^*$-convergent to $\overline{P}$, we
conclude that $\overline{P}_\alpha\overset{w^*}{\rightarrow}
\overline{P}$.

$(ii)\Rightarrow (i)$. Let $Q\in \mathcal{P}_{\beta_k}(^kX^{**})$ an
extension of $P$ with $\|Q\|_{\mathcal{P}_{\beta_k}(^kX^{**})}=1$.
From Corollary \ref{MAP},
$\Theta_{_{\beta_k}}:\widehat{\bigotimes}_{\beta_k}^{k,s}X^{**}\longrightarrow
 \left(\widehat{\bigotimes}_{\beta_k}^{k,s}X\right)^{**}$ is an
 isometry. Due to this, the polynomial $Q\in \mathcal{P}_{\beta_k}(^kX^{**})=
 \left(\widehat{\bigotimes}_{\beta_k}^{k,s}X^{**}\right)^*$ has a
 Hahn-Banach extension $\widetilde{Q}\in
 \left(\widehat{\bigotimes}_{\beta_k}^{k,s}X\right)^{***}=
 \mathcal{P}_{\beta_k}(^kX)^{**}$. By Goldstine, there exist a net $\{P_{\alpha}\}_\alpha\subset
B_{\mathcal{P}_{\beta_k}(^{k}X)}$ such that
$P_\alpha\overset{w^*}{\rightarrow} \widetilde{Q}$, where $w^*$
means here the topology $\sigma(\mathcal{P}_{\beta_k}(^kX)^{**},
\mathcal{P}_{\beta_k}(^kX)^{*})$.

Let $u\in \widehat{\bigotimes}_{\beta_k}^{k,s}X\subset
\mathcal{P}_{\beta_k}(^kX)^{*}$. So we have
$$
\langle P_\alpha , u\rangle \rightarrow \langle \widetilde{Q},
u\rangle = \langle Q, u\rangle = \langle P, u\rangle.
$$
This means that $P_\alpha\overset{w^*}{\rightarrow} P$, where $w^*$
denotes here the topology $\sigma(\mathcal{P}_{\beta_k}(^kX),
\widehat{\bigotimes}_{\beta_k}^{k,s}X)$. By $(ii)$, this implies that
$\overline{P}_\alpha\to \overline{P}$ for the topology
$\sigma(\mathcal{P}_{\beta_k}(^kX^{**}),
\widehat{\bigotimes}_{\beta_k}^{k,s}X^{**})$.

Now, if $v\in \widehat{\bigotimes}_{\beta_k}^{k,s}X^{**}$, it follows
that
$$
\langle \overline{P}_\alpha , v\rangle \rightarrow \langle
\overline{P}, v\rangle.
$$
But also, since $v\in  \mathcal{P}_{\beta_k}(^kX)^{*}$,
$$
\langle \overline{P}_\alpha , v\rangle = \langle v, P_\alpha\rangle
\rightarrow \langle v, \widetilde{Q}\rangle = \langle Q, v\rangle.
$$
Therefore, $\overline{P}=Q$.

The equivalence between $(ii)$ and $(iii)$ is a consequence of the
previous lemma.
\end{proof}

For the particular case of $\beta_k$ being an
injective $s$-tensor norm, the same argument but applying Proposition \ref{isometria} instead of Corollary \ref{MAP}, yields to a version of
Theorem \ref{godefroy} without the hypothesis of metric
approximation property.

\begin{corollary}
Let $\beta_k$ be an injective s-tensor norm of order $k$. Consider a polynomial
$P\in\mathcal{P}_{\beta_k}(^kX)$ with
$\|P\|_{\mathcal{P}_{\beta_k}(^kX)}=1$.
Then, the following are equivalent:
\begin{enumerate}
\item[$(i)$] $P$ has a unique norm preserving extension to  $\mathcal{P}_{\beta_k}(^kX^{**})$.
\item[$(ii)$] The ``$\beta_k$''-Aron-Berner extension $(AB)_{\beta_k}:
\left( B_{\mathcal{P}_{\beta_k}(^kX)}, w^*\right)\longrightarrow \left(
B_{\mathcal{P}_{\beta_k}(^kX^{**})}, w^*\right)$ is continuous at $P$.
\item[$(iii)$] If the net $\{P_{\alpha}\}_\alpha\subset  B_{\mathcal{P}_{\beta_k}(^{k}X)}$ converges pointwise to $P$,
then $\{\overline{P_{\alpha}}\}_\alpha$ converges pointwise to
 $\overline{P}$ in $X^{\ast\ast }$.
\end{enumerate}

\end{corollary}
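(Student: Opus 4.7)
The plan is to repeat the proof of Theorem \ref{godefroy} verbatim, with the single substitution that Proposition \ref{isometria} plays the role that Corollary \ref{MAP} played there. Injectivity of $\beta_k$ already guarantees that $\Theta_{\beta_k}:\widehat{\bigotimes}_{\beta_k}^{k,s}X^{**}\to (\widehat{\bigotimes}_{\beta_k}^{k,s}X)^{**}$ is an isometry, so the hypothesis ``$X^{**}$ has the MAP'' can simply be deleted. Note that Corollary \ref{MAP} was used only in direction $(ii)\Rightarrow (i)$ of Theorem \ref{godefroy}; the other implications do not depend on any approximation property.

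For $(i)\Rightarrow (ii)$: given $\{P_\alpha\}\subset B_{\mathcal{P}_{\beta_k}(^kX)}$ with $P_\alpha\overset{w^*}{\to}P$, use $w^*$-compactness of $B_{\mathcal{P}_{\beta_k}(^kX^{**})}$ to extract a subnet $\{\overline{P_\gamma}\}$ converging $w^*$ to some $Q$ with $\|Q\|_{\mathcal{P}_{\beta_k}(^kX^{**})}\le 1$. Pointwise evaluation on $X$ gives $Q|_X=P$, which combined with $\|Q\|\le 1=\|P\|$ makes $Q$ a norm preserving extension of $P$; by $(i)$, $Q=\overline{P}$. Since every subnet of $\{\overline{P_\alpha}\}$ admits a sub-subnet converging $w^*$ to $\overline{P}$, the whole net does. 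The equivalence $(ii)\Leftrightarrow(iii)$ is immediate from Lemma \ref{convergencia}.

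For the crucial step $(ii)\Rightarrow (i)$: take any norm preserving extension $Q\in\mathcal{P}_{\beta_k}(^kX^{**})$ of $P$. By Proposition \ref{isometria}, $\Theta_{\beta_k}$ is an isometric embedding, so $Q$, viewed through $\Theta_{\beta_k}$ as a functional on $\Theta_{\beta_k}\bigl(\widehat{\bigotimes}_{\beta_k}^{k,s}X^{**}\bigr)\subset(\widehat{\bigotimes}_{\beta_k}^{k,s}X)^{**}$, admits by Hahn--Banach a norm preserving extension $\widetilde{Q}\in(\widehat{\bigotimes}_{\beta_k}^{k,s}X)^{***}=\mathcal{P}_{\beta_k}(^kX)^{**}$. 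Goldstine furnishes a net $\{P_\alpha\}\subset B_{\mathcal{P}_{\beta_k}(^kX)}$ with $P_\alpha\overset{w^*}{\to}\widetilde{Q}$ in $\sigma(\mathcal{P}_{\beta_k}(^kX)^{**},\mathcal{P}_{\beta_k}(^kX)^{*})$. Testing against $u\in\widehat{\bigotimes}_{\beta_k}^{k,s}X$ shows $P_\alpha\overset{w^*}{\to}P$ in $\sigma(\mathcal{P}_{\beta_k}(^kX),\widehat{\bigotimes}_{\beta_k}^{k,s}X)$, and $(ii)$ then gives $\overline{P_\alpha}\to\overline{P}$ in $\sigma(\mathcal{P}_{\beta_k}(^kX^{**}),\widehat{\bigotimes}_{\beta_k}^{k,s}X^{**})$. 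Finally, testing with $v\in\widehat{\bigotimes}_{\beta_k}^{k,s}X^{**}$ produces the two limits $\langle\overline{P_\alpha},v\rangle\to\langle\overline{P},v\rangle$ and $\langle\overline{P_\alpha},v\rangle=\langle v,P_\alpha\rangle\to\langle v,\widetilde{Q}\rangle=\langle Q,v\rangle$, forcing $Q=\overline{P}$.

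The only place where one should pause is the opening move of $(ii)\Rightarrow(i)$: one must check that injectivity of $\beta_k$ really is enough to enable the Hahn--Banach step, i.e.\ that $\Theta_{\beta_k}$ is an isometric (not merely bounded) embedding. This is exactly the content of Proposition \ref{isometria}, whose proof reduces the question to the MAP case via the injective embedding $X^{**}\hookrightarrow C(B_{X^*})^{**}$; here the injectivity hypothesis on $\beta_k$ is used to transport the isometry across the tensor product operators. Once this is in hand, no further approximation property on $X^{**}$ is needed and the rest of the argument is a line-by-line copy of Theorem \ref{godefroy}.
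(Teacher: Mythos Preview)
Your proposal is correct and follows exactly the approach indicated by the paper: the authors state immediately before the corollary that ``the same argument but applying Proposition \ref{isometria} instead of Corollary \ref{MAP}'' yields the result without the MAP hypothesis, and that is precisely what you have carried out in detail.
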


\section{ $Q_{\beta}$-reflexivity}
In \cite{AD}   Aron and  Dineen considered the problem of obtaining a polynomial functional representation of the bidual of $\mathcal{P}(^kX)$. More precisely, they asked when the space $\mathcal{P}(^kX)^{**}$ is isomorphic to
$\mathcal{P}(^kX^{**})$ in a canonical way. Spaces
with this property are called $Q$-reflexive. A reflexive Banach space $X$ with the
approximation property is $Q$-reflexive if and only if
$\mathcal{P}(^kX)$ is reflexive, for all $k$.

In an analogous way, we consider a sort of $Q$-reflexivity for a maximal polynomial ideal. Recall that a maximal polynomial ideal is the dual of the symmetric tensor product endowed with a finitely generated s-tensor norm (see Remark \ref{hbbeta} for the definition of an s-tensor norm $\beta=(\beta_k)_k$).

\begin{definition} Let $\beta$ be an s-tensor norm. We say that a Banach space $X$ is $Q^k_{{\beta}}$-reflexive if
 $\Theta_{\beta_k}^*:\mathcal{P}_{\beta_k}(^kX)^{**}\longrightarrow \mathcal{P}_{\beta_k}(^kX^{**})$ is an isomorphism. We say that $X$ is $Q_{{\beta}}$-reflexive if it is $Q^k_{{\beta}}$-reflexive for all $k$.

\end{definition}

Naturally, $Q_{{\pi_{s}}}$-reflexive spaces are the usual $Q$-reflexive spaces.
It follows from \cite[Th. 4]{JPZ} that  if  $X ^{**}$ has the AP and the Radon-Nikod\'{y}m property (RNP),  then  $X$ is $Q^k_{{\pi_{s}}}$-reflexive if  and only if $\mathcal{P}(^kX)=\mathcal{P}_{w}(^kX)$ (for the multilinear case see \cite[Cor. 3]{CG}).  We obtain here a kind of ``predual version'' of that result,  a condition for the $Q^k_{\varepsilon_{s}}$-reflexivity.

\begin{theorem}\label{Q-reflexive}  Suppose that $X^{**}$  has the AP and $X^*$ has  the RNP. Then, $X$ is $Q^k_{\varepsilon_{s}}$-reflexive if  and only if $\mathcal{P}(^kX^*)=\mathcal{P}_{w}(^kX^*)$.
\end{theorem}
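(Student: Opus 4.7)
The strategy is to reduce $Q^k_{\varepsilon_s}$-reflexivity to the surjectivity of $\Theta_{\varepsilon_{k,s}}$ and then to identify its image as a concrete subspace of polynomials on $X^{*}$. Because $\varepsilon_{k,s}$ is injective, Proposition~\ref{isometria} gives that
$\Theta_{\varepsilon_{k,s}}\colon \widehat{\bigotimes}^{k,s}_{\varepsilon_{k,s}}X^{**}\to\big(\widehat{\bigotimes}^{k,s}_{\varepsilon_{k,s}}X\big)^{**}$
is an isometric embedding. Its adjoint $\Theta^{*}_{\varepsilon_{k,s}}$ is therefore a metric surjection onto $\mathcal{P}_{\varepsilon_{k,s}}(^{k}X^{**})$, and a standard annihilator argument (the image of $\Theta_{\varepsilon_{k,s}}$ is norm-closed, hence weakly closed) shows that $\Theta^{*}_{\varepsilon_{k,s}}$ is an isomorphism precisely when $\Theta_{\varepsilon_{k,s}}$ itself is onto. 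Thus it suffices to characterize the surjectivity of $\Theta_{\varepsilon_{k,s}}$.

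To identify the target concretely, I would use the hypotheses in two steps: the RNP of $X^{*}$ forces $X$ to be Asplund, so $\mathcal{P}_{I}(^{k}X)\overset{1}{=}\mathcal{P}_{N}(^{k}X)$; the AP of $X^{**}$ passes to the predual $X^{*}$, and hence $\mathcal{P}_{N}(^{k}X)\overset{1}{=}\widehat{\bigotimes}^{k,s}_{\pi_{k,s}}X^{*}$. Dualizing yields the isometric identification $\big(\widehat{\bigotimes}^{k,s}_{\varepsilon_{k,s}}X\big)^{**}\overset{1}{=}\mathcal{P}(^{k}X^{*})$. Tracking the definition of $\Theta_{\varepsilon_{k,s}}$ through this identification on an elementary tensor $\otimes^{k} z$ with $z\in X^{**}$, one uses that nuclear polynomials $\phi^{k}$ ($\phi\in X^{*}$) form a total subset of $\mathcal{P}_{I}(^{k}X)$ and computes
$$\langle\Theta_{\varepsilon_{k,s}}(\otimes^{k}z),\phi^{k}\rangle=\overline{\phi^{k}}(z)=z(\phi)^{k},$$
so that $\Theta_{\varepsilon_{k,s}}(\otimes^{k}z)$ corresponds to the polynomial $\phi\mapsto z(\phi)^{k}$ in $\mathcal{P}(^{k}X^{*})$. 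Since $\Theta_{\varepsilon_{k,s}}$ is an isometric embedding of a Banach space its image is norm-closed, and it coincides with the closure of $\mathrm{span}\{z^{k}:z\in X^{**}\}=\mathcal{P}_{f}(^{k}X^{*})$, i.e.\ with the approximable polynomials $\mathcal{P}_{A}(^{k}X^{*})$.

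The final ingredient is the Aron--Prolla-type identification: for any Banach space $Z$ with the AP, $\mathcal{P}_{A}(^{k}Z)=\mathcal{P}_{w}(^{k}Z)$. Applied to $Z=X^{*}$, this gives $\mathrm{Im}(\Theta_{\varepsilon_{k,s}})=\mathcal{P}_{w}(^{k}X^{*})$ under the identification of $\big(\widehat{\bigotimes}^{k,s}_{\varepsilon_{k,s}}X\big)^{**}$ with $\mathcal{P}(^{k}X^{*})$. Combining the three paragraphs, $\Theta_{\varepsilon_{k,s}}$ is surjective if and only if $\mathcal{P}_{w}(^{k}X^{*})=\mathcal{P}(^{k}X^{*})$, which is the claimed equivalence. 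The step I expect to require the most care is the second one: one must marshal the Asplund + AP hypotheses to pass cleanly between $\mathcal{P}_{I}(^{k}X)$, $\mathcal{P}_{N}(^{k}X)$ and $\widehat{\bigotimes}^{k,s}_{\pi_{k,s}}X^{*}$ and then verify that elementary tensors on the $X^{**}$ side are sent to the expected finite-type polynomials on $X^{*}$; once this bookkeeping is nailed down, both the reduction to surjectivity and the Aron--Prolla step are immediate.
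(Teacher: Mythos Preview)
Your argument is correct and follows essentially the same route as the paper's proof: both use the RNP of $X^*$ together with the AP (of $X^*$, inherited from $X^{**}$) to identify $\mathcal{P}_I(^kX)^{**}$ with $\mathcal{P}(^kX^*)$, and both invoke the Aron--Prolla identification $\mathcal{P}_w(^kX^*)=\widehat{\bigotimes}^{k,s}_{\varepsilon_{k,s}}X^{**}$ to finish. The only cosmetic difference is that the paper phrases the reduction via the adjoint ``Borel transformation'' $B_k\colon \mathcal{P}(^kX^*)^*\to\mathcal{P}_I(^kX^{**})$, whereas you work directly with $\Theta_{\varepsilon_{k,s}}$ and reduce $Q^k_{\varepsilon_s}$-reflexivity to its surjectivity; these are dual formulations of the same step. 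One small caution: in your Aron--Prolla step the relevant hypothesis is the AP of $Z^*$ (here $X^{**}$), not of $Z$ itself---this is exactly what the paper uses, and since $X^{**}$ has the AP by assumption your application is valid, but the general statement as you wrote it should be adjusted.
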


\begin{proof}
First we consider the Borel transformation $B_{k}:\mathcal{P}(^kX^*)^* \to \mathcal{P}_{I}(^kX^{**})$ on $X^*$. $B_{k}$ is the adjoint of the natural operator $\widehat{\bigotimes}^{k,s}_{\varepsilon_{k,s}}X^{**} \to \big(\widehat{\bigotimes}^{k,s}_{\pi_{k,s}}X^{*}\big)^*= \mathcal{P}(^kX^*)  $ (which is always an isometric embedding).\\

Now, the RNP of $X^*$ implies that $\mathcal{P}_{I}(^kX)=\mathcal{P}_{N}(^kX)$ \cite{BR,CD}, and the AP of $X^*$ yields the equality $\mathcal{P}_{N}(^kX)=\widehat{\bigotimes}^{k,s}_{\pi_{k,s}}X^{*}$. Then,
$$
\mathcal{P}_{I}(^kX)^{**}=\mathcal{P}_{N}(^kX)^{**}= \Big(\widehat{\bigotimes}^{k,s}_{\pi_{k,s}}X^{*}\Big)^{**}=\mathcal{P}(^kX^*)^*.
$$

It follows that $\mathcal{P}_{I}(^kX)^{**}$ is isomorphic to $\mathcal{P}_{I}(^kX^{**})$ through $\Theta_{\varepsilon_{k,s}}^*$ if and only if $B_{k}$ is an isomorphism.\\

On the other hand, if $X^{**}$ has the approximation property then every polynomial in $\mathcal{P}_{w}(^kX^*)$ is
uniformly approximable on $B_{X^*}$ by finite type polynomials and hence $\mathcal{P}_{w}(^kX^*)$ is isometrically isomorphic to $\widehat{\bigotimes}^{k,s}_{\varepsilon_{k,s}}X^{**}$ \cite{AP}. Hence,
$$
\mathcal{P}_{w}(^kX^*)^*= \Big(\widehat{\bigotimes}^{k,s}_{\varepsilon_{k,s}}X^{**}\Big)^{*}=\mathcal{P}_{I}(^kX^{**}).
$$

Therefore, $B_{k}$ is an isomorphism if and only if $\mathcal{P}(^kX^*)=\mathcal{P}_{w}(^kX^*)$.
\end{proof}

In the above result, the RNP on $X^*$ can be replaced by weaker hypothesis that $\ell_{1}$ is not a subspace of $\widehat{\bigotimes}^{k,s}_{\varepsilon_{k,s}}X$ (see \cite{BR,CD}).

\begin{remark}\label{epsilon-pi}\rm
 If $\mathcal{P}_{I}(^kX^{**})= \widehat{\bigotimes}^{k,s}_{\pi_{k,s}}X^{***}  $ (for example if $X^{***}$ has the AP and the RNP), then  $X$ is $Q^k_{\varepsilon_{s}}$-reflexive if  and only if  $X ^*$ is $Q^k_{\pi_{s}}$-reflexive. Indeed, if $X ^*$ is $Q^k_{\pi_{s}}$-reflexive, one has

$$
\mathcal{P}_{I}(^kX^{**})=\widehat{\bigotimes}^{k,s}_{\pi_{k,s}}X^{***}\simeq \Big(\widehat{\bigotimes}^{k,s}_{\pi_{k,s}}X^{*}\Big)^{**}=\mathcal{P}_{I}(^kX)^{**}.
$$
On the other side, being $X$ a $Q^k_{\varepsilon_{s}}$-reflexive space, we have
$$
\widehat{\bigotimes}^{k,s}_{\pi_{k,s}}X^{***}=\mathcal{P}_{I}(^kX^{**})\simeq \mathcal{P}_{I}(^kX)^{**}=\Big(\widehat{\bigotimes}^{k,s}_{\pi_{k,s}}X^{*}\Big)^{**}.
$$
Note that in both cases we use that whether the equality $\mathcal{P}_{I}(^kX^{**})= \widehat{\bigotimes}^{k,s}_{\pi_{k,s}}X^{***}$ holds, it is also valid that $\mathcal{P}_{I}(^kX)= \widehat{\bigotimes}^{k,s}_{\pi_{k,s}}X^{*}$.
\end{remark}

\begin{example}
\begin{itemize}
\item[$(i)$]  The  Tsirelson space $T$ and the Tsirelson-James space $T_{J}$ are  $Q_{{\varepsilon_{s}}}$-reflexive  (note that $\mathcal{P}(^kT^*)=\mathcal{P}_{w}(^kT^*)$ and $\mathcal{P}(^kT^*_{J})=\mathcal{P}_{w}(^kT_{J}^*)$   \cite{AD}). The space $X=\widehat{\otimes}^{2,s}_{\varepsilon_{2,s}}T_{J} $  is  $Q_{{\varepsilon_{s}}}$-reflexive   but not quasi-reflexive (i.e., $X^{**}/X$ is infinite-dimensional). The $Q_{{\varepsilon_{s}}}$-reflexivity of $X$ is a consequence of Remark \ref{epsilon-pi} and the fact, proved in \cite{Go}, that $X^*=\widehat{\otimes}^{2,s}_{\pi_{2,s}}T_{J}^* $ is $Q_{{\pi_{s}}}$-reflexive.
\item[$(ii)$] The space $X=\ell_{p}$ is  $Q^k_{\varepsilon_{s}}$-reflexive if and only if   $k<p^*$ (where $p^*$ is the conjugate of $p$).  The space $X=L_{p}$ ($1<p<\infty$) contains a complemented copy of $\ell_{2}$. Thus, $L_{p}$ is not $Q^k_{\varepsilon_{s}}$-reflexive for any $k>1$.
\end{itemize}

\end{example}
\medskip
As appeared in the proof of Theorem \ref{Q-reflexive}, we can translate the statement of that theorem in the following way: if $X^{**}$ has the AP and $X^*$ has the RNP, then $X$ is $Q^k_{\varepsilon_{s}}$-reflexive if  and only if $\mathcal{P}(^kX^*)=\widehat{\bigotimes}^{k,s}_{\varepsilon_{k,s}}X^{**}$. Now we present a similar result for the case of the symmetric tensor norm $\beta=/ \pi_{s} \s$. Recall that $\left(\widehat{\bigotimes}^{k,s}_{\et}X\right)^*=\mathcal{P}_{e}(^kX)$ is the ideal of extendible $k$-homogeneous polynomials.

\begin{theorem}\label{Q-reflexive eta}  Let $X$ be a Banach space such that $X^{*}$  has the BAP and the RNP. Then, $X$ is $Q^k_{/ \pi_{s} \s}$-reflexive if and only if $\mathcal{P}_{\etd}(^kX^*)=\widehat{\bigotimes}^{k,s}_{\et}X^{**}$.
\end{theorem}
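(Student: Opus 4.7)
The plan is to emulate the proof of Theorem \ref{Q-reflexive}, replacing the integral ideal $\mathcal{P}_I$ (dual to $\varepsilon_{k,s}$, with dual s-tensor norm $\pi_{k,s}$) by the extendible ideal $\mathcal{P}_{e}=\mathcal{P}_{\et}$ (dual to $\et$, with dual s-tensor norm $\etd$). The strategy is to identify $\mathcal{P}_e(^kX)^{**}$ with $\mathcal{P}_{\etd}(^kX^*)^*$ via a representation theorem for the extendible ideal, compare this with $\mathcal{P}_e(^kX^{**})$ through a canonical ``Borel-type'' map $\iota$, and read off the desired equivalence.

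First I would establish, under BAP and RNP of $X^*$, the isometric identification
$$\mathcal{P}_e(^kX)\overset{1}{=}\widehat{\bigotimes}^{k,s}_{\etd}X^*.$$
The BAP of $X^*$ delivers the representation of the minimal hull $\mathcal{P}_e^{min}(^kX)\overset{1}{=}\widehat{\bigotimes}^{k,s}_{\etd}X^*$ by the standard representation-theorem argument, in parallel with the identification $\mathcal{P}_N(^kX)=\widehat{\bigotimes}^{k,s}_{\pi_{k,s}}X^*$ obtained under AP of $X^*$ in the proof of Theorem \ref{Q-reflexive}. The RNP of $X^*$ is then used to force the maximal extendible ideal to coincide with its minimal hull, adapting the Boyd--Ryan/Carando--Dimant disintegration argument \cite{BR,CD} from the integral setting to the extendible one. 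Taking biduals yields
$$\mathcal{P}_e(^kX)^{**}=\Big(\widehat{\bigotimes}^{k,s}_{\etd}X^*\Big)^{**}=\mathcal{P}_{\etd}(^kX^*)^*.$$

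Next I would set up the canonical mapping
$$\iota:\widehat{\bigotimes}^{k,s}_{\et}X^{**}\longrightarrow \mathcal{P}_{\etd}(^kX^*),\qquad \otimes^k z\longmapsto \bigl(\phi\mapsto \phi(z)^k\bigr),$$
and verify that $\iota$ is always an isometric embedding by combining the injectivity of $\et$, the finite-dimensional duality $\bigotimes^{k,s}_{\et}M\overset{1}{=}\mathcal{P}_{\etd}(^kM^*)$ for $M\in FIN(X^{**})$, and the metric mapping property. A diagram chase on elementary tensors then shows that, under the identification $\mathcal{P}_e(^kX)^{**}=\mathcal{P}_{\etd}(^kX^*)^*$ of the previous paragraph, the morphism $\Theta^*_{\et}$ is conjugate to the adjoint $\iota^*:\mathcal{P}_{\etd}(^kX^*)^*\to \mathcal{P}_e(^kX^{**})$. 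Since $\iota$ is an isometric embedding, $\iota^*$ is a metric quotient; hence $\Theta^*_{\et}$ is an isomorphism precisely when $\iota$ is surjective, that is, when $\mathcal{P}_{\etd}(^kX^*)=\widehat{\bigotimes}^{k,s}_{\et}X^{**}$. The main technical obstacle is the isometric identification $\mathcal{P}_e(^kX)=\widehat{\bigotimes}^{k,s}_{\etd}X^*$: the BAP of $X^*$ routinely produces the minimal-hull representation, but collapsing the maximal extendible ideal onto its minimal hull under RNP of $X^*$ requires a careful adaptation of the classical integral-equals-nuclear disintegration beyond the integral ideal, and it is precisely here that the bounded (rather than plain) approximation property intervenes.
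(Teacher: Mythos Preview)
Your proposal is correct and follows essentially the same route as the paper. Both arguments rest on the isometric identification $\mathcal{P}_e(^kX)\overset{1}{=}\widehat{\bigotimes}^{k,s}_{\etd}X^*$ under the hypotheses (the paper obtains this by quoting \cite{CarGal-SRN}, using that $X^*$ having the RNP makes $X$ Asplund, together with the BAP of $X^*$; your sketch via ``minimal hull plus collapse under RNP'' is exactly what that reference carries out). The only difference is the dual level at which the comparison is made: the paper stays one step lower and shows directly that, under the isometry $(J_{\etd})^*:\mathcal{P}_e(^kX)^*\to\mathcal{P}_{\etd}(^kX^*)$, the map $\Theta_{\et}$ itself coincides with the canonical map $J_{\et}$ (your $\iota$), then uses that $\Theta_{\et}^*$ is an isomorphism iff $\Theta_{\et}$ is. You instead pass to biduals and compare $\Theta_{\et}^*$ with $\iota^*$. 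Your separate verification that $\iota$ is an isometric embedding is fine, but note that in the paper's framing this comes for free from Proposition~\ref{isometria} (since $\et$ is injective) combined with the isometry $(J_{\etd})^*$.
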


\begin{proof}
Since $X$ is Asplund and $X^*$ has the BAP then, by the comments after \cite[Cor. 2.4]{CarGal-SRN}, we have $\mathcal{P}_{e}(^kX)=\widehat{\bigotimes}^{k,s}_{\etd}X^{*}$. More precisely, the mapping
$$J_{\etd}: \widehat{\bigotimes}^{k,s}_{\etd}X^{*} \to \mathcal{P}_{e}(^kX),$$
given by $J_{\etd}(\o \phi)= \phi^k$  is an isometric isomorphism.
Thus, $\mathcal{P}_{e}(^kX)^{*}= \left( \widehat{\bigotimes}^{k,s}_{\etd}X^{*} \right)^{*}=\mathcal P_{\etd}(^k X^{*})$ via $(J_{\etd})^*$.

Consider the following two mappings
$$\Theta_{\et}: \widehat{\bigotimes}^{k,s}_{\et}X^{**} \to \mathcal{P}_{e}(^kX)^{*}\qquad\textrm{and}$$
$$J_{\et} : \widehat{\bigotimes}^{k,s}_{\et}X^{**} \to  \left( \widehat{\bigotimes}^{k,s}_{\etd}X^{*} \right)^{*}=\mathcal P_{\etd}(^k X^{*}),$$
where $\Theta_{\et}(\o z)(P)= \overline P(z)$ and $J_{\et}(\o z)= z^k$.

Notice that, with the above identifications, these two mappings are equal.
Indeed, since $J_{\etd} ( \bigotimes^{k,s} X^*)$ is dense in $\mathcal{P}_{e}(^kX)$, by linearity  we only have to check the following equality
\begin{align*}
\Theta_{\et}(\o z)(J_{\etd}(\o \phi)) & = \Theta_{\et}(\o z)(\phi^k) \\
& = \overline{\phi^k} (z) = (z(\phi))^k \\
& =z^k(\phi) = J_{\et}(\o z)(\phi).
\end{align*}
In consequence, we obtain that $X$ is $Q^k_{/ \pi_{s} \s}$-reflexive if and only if $J_{\et}$ is an isomorphism. And this is equivalent to the identity $\mathcal{P}_{\etd}(^kX^*)=\widehat{\bigotimes}^{k,s}_{\et}X^{**}$.
\end{proof}

Carando and the second author  studied in \cite{CarGal-SRN} the symmetric Radon-Nikod\'{y}m property. For adjoints of projective tensor norms having this property the previous result is canonically extended:

\begin{theorem}\label{Q-reflexive beta}  Let $\beta_k$ be a projective s-tensor norm of order $k$ with the symmetric Radon-Nikod\'{y}m property and let $X$ be an Asplund space such that $X^{*}$ has the BAP. Then, $X$ is $Q^k_{\beta^*}$-reflexive if and only if $\mathcal{P}_{\beta_k}(^kX^*)=\widehat{\bigotimes}^{k,s}_{\beta_k^*}X^{**}$.
\end{theorem}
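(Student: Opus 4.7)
The strategy is to mimic the proof of Theorem \ref{Q-reflexive eta} almost verbatim, with $\beta_k$ playing the role of $\etd$ and $\beta_k^{*}$ playing the role of $\et$. The only essential input is the Radon-Nikod\'{y}m identification, which here is supplied by the general sRNP hypothesis rather than the specific fact $\mathcal{P}_{e}(^kX)=\widehat{\bigotimes}^{k,s}_{\etd}X^{*}$ used in the earlier proof.

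\emph{Step 1.} I would first invoke the symmetric Radon-Nikod\'{y}m property of the projective s-tensor norm $\beta_k$, together with $X$ being Asplund and $X^{*}$ having the BAP, to obtain (following \cite{CarGal-SRN}) an isometric isomorphism
$$
J_{\beta_k}\colon\ \widehat{\bigotimes\nolimits}^{k,s}_{\beta_k}X^{*}\ \longrightarrow\ \mathcal{P}_{\beta_k^{*}}(^kX),\qquad J_{\beta_k}(\otimes^k \phi)=\phi^k.
$$
Dualising gives an isometric isomorphism
$$
J_{\beta_k}^{\,*}\colon\ \mathcal{P}_{\beta_k^{*}}(^kX)^{*}\ \longrightarrow\ \Big(\widehat{\bigotimes\nolimits}^{k,s}_{\beta_k}X^{*}\Big)^{*}=\mathcal{P}_{\beta_k}(^kX^{*}).
$$

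\emph{Step 2.} I would then set, alongside the canonical map $\Theta_{\beta_k^{*}}\colon \widehat{\bigotimes\nolimits}^{k,s}_{\beta_k^{*}}X^{**}\to \mathcal{P}_{\beta_k^{*}}(^kX)^{*}$ from Section~3, the auxiliary map
$$
J_{\beta_k^{*}}\colon\ \widehat{\bigotimes\nolimits}^{k,s}_{\beta_k^{*}}X^{**}\ \longrightarrow\ \mathcal{P}_{\beta_k}(^kX^{*}),\qquad J_{\beta_k^{*}}(\otimes^k z)=z^k,
$$
where $z^k(\phi):=z(\phi)^k$ for $\phi\in X^{*}$. A direct computation on elementary tensors, using $J_{\beta_k}(\otimes^k \phi)=\phi^k$ and the fact that $\overline{\phi^k}(z)=z(\phi)^k$, establishes the identity $J_{\beta_k}^{\,*}\circ\Theta_{\beta_k^{*}}=J_{\beta_k^{*}}$: for $z\in X^{**}$ and $\phi\in X^{*}$,
$$
\big(J_{\beta_k}^{\,*}\circ\Theta_{\beta_k^{*}}(\otimes^k z)\big)(\otimes^k \phi)=\Theta_{\beta_k^{*}}(\otimes^k z)(\phi^k)=\overline{\phi^k}(z)=z(\phi)^k=J_{\beta_k^{*}}(\otimes^k z)(\otimes^k \phi).
$$
Linearity and continuity extend the identity to the whole completion.

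\emph{Step 3.} Since $J_{\beta_k}^{\,*}$ is an isomorphism, $\Theta_{\beta_k^{*}}$ is an isomorphism onto $\mathcal{P}_{\beta_k^{*}}(^kX)^{*}$ if and only if $J_{\beta_k^{*}}$ is an isomorphism onto $\mathcal{P}_{\beta_k}(^kX^{*})$. By definition $X$ is $Q^{k}_{\beta^{*}}$-reflexive precisely when $\Theta_{\beta_k^{*}}^{\,*}$ is an isomorphism, and by standard duality this is equivalent to $\Theta_{\beta_k^{*}}$ being an isomorphism onto $\mathcal{P}_{\beta_k^{*}}(^kX)^{*}$. Chaining these equivalences shows that $X$ is $Q^{k}_{\beta^{*}}$-reflexive if and only if $J_{\beta_k^{*}}$ realises the identification $\mathcal{P}_{\beta_k}(^kX^{*})=\widehat{\bigotimes}^{k,s}_{\beta_k^{*}}X^{**}$, as required.

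The main obstacle is Step~1, namely the extraction of the identity $\mathcal{P}_{\beta_k^{*}}(^kX)=\widehat{\bigotimes}^{k,s}_{\beta_k}X^{*}$ from the symmetric Radon-Nikod\'{y}m property of an arbitrary projective s-tensor norm (the special case $\beta_k=\etd$ suffices for Theorem \ref{Q-reflexive eta}). Granted that input, Steps~2 and 3 are a purely formal commutative-diagram argument parallel to the proof of Theorem \ref{Q-reflexive eta}.
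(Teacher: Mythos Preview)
Your proposal is correct and follows exactly the approach the paper intends: the paper does not give a separate proof of this theorem but merely states that ``the previous result is canonically extended'', meaning that the argument of Theorem~\ref{Q-reflexive eta} carries over verbatim with $\beta_k$ replacing $\etd$ and $\beta_k^*$ replacing $\et$, the sRNP hypothesis supplying the identification $\mathcal{P}_{\beta_k^{*}}(^kX)=\widehat{\bigotimes}^{k,s}_{\beta_k}X^{*}$ from \cite{CarGal-SRN}. Your Steps~1--3 reproduce that proof precisely, including the same commutative-diagram computation and the same passage between $\Theta_{\beta_k^*}$ and $\Theta_{\beta_k^*}^{\,*}$.
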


\subsection*{Acknowledgements}
We would like to thank Daniel Carando, F\'{e}lix Cabello S\'{a}nchez and Jes\'{u}s Su\'{a}rez for  helpful comments and suggestions.

\end{document}